\definecolor{blu}{rgb}{0.0,0.0,1.0}
\theoremstyle{definition}                          %stile corsivo
\newtheorem{thm}{Theorem}[section]     %definizione ambiente teorema
\newtheorem{prop}[thm]{Proposition}
\newtheorem{Hypothesis}[thm]{Hypothesis}%definizione ambiente proposizione
\newtheorem{lem}[thm]{Lemma}             %definizione ambiente lemma
\theoremstyle{definition}               %stile roman
\newtheorem{dfn}[thm]{Definition}%  definizione ambiente definizione
\newtheorem{ese}[thm]{Example}        %definizione ambiente esempio
\newtheorem{rem}[thm]{Remark}
\newtheoremstyle{prova}% hnamei
{3pt}% Space above
{3pt}% Space below
{}% Body font
{}% Indent amounti
{\textbf}% Theorem head font
{.}% hPunctuation after theorem headi
{\newline}% hSpace after theorem headi2
{}% Theorem head spec (can be left empty, meaning `normal')
\theoremstyle{prova}
\def\A{\mathbb A}
\def\R{\mathbb R}
\def\N{\mathbb N}
\def\M{\mathbb M}
\def\Z{\mathbb Z}
\def\D{\mathbb D}
\def\E{\mathbb E}
\def\P{\mathbb P}
\def\Q{\mathbb Q}
\def\R{\mathbb R}
\def\S{\mathbb S}
\def\U{\mathbb U}
\def\V{\mathbb V}
\def\X{\mathbb X}
\def\Y{\mathbb Y}
\def\K{\mathbb K}
\def\shb{{\cal B}}
\def\shc{{\cal C}}
\def\shd{{\cal D}}
\def\shf{{\cal F}}
\def\shg{{\cal G}}
\def\shm{{\cal M}}
\def\shl{{\cal L}}
\def\shp{{\cal P}}
\def\1{\mathds{1}}
\def\X{\mathbb{X}}
\def\Y{\mathbb{Y}}
\def\Z{\mathbb{Z}}
\def\M{\mathbb{M}}
\def\W{\mathbb{W}}
\def\V{\mathbb{V}}
\def\be{\begin{equation}}
\def\ee{\end{equation}}
\begin{document}
% \author{{\sc Giorgio Fabbri} \footnote{Centre d'Etudes des Politiques
%  Economiques de l'Universit\`e d'Evry, Evry (France)}}
%  %{\sc,}\ 
% \author{{\sc Cristina Di Girolami}\footnote{Laboratoire Manceau de Math\'ematiques, Facult\'e des Sciences et Techniques, Universit\'e du Maine, D\'epartement de Math\'ematiques, Avenue Olivier Messiaen, 72085 Le Mans CEDEX 9 (France)}
%      {\sc,}\  
% \author{{\sc Giorgio Fabbri} \footnote{Centre d'Etudes des Politiques
%  Economiques de l'Universit\`e d'Evry, Evry (France)}}
%  %{\sc,}\ 

% \ {\sc and}\ {\sc Francesco RUSSO} %$^*$}
% \footnote{Unit\'e de Math\'ematiques appliqu\'ees,
% 828, boulevard des Mar\'echaux,
% F-91120 Palaiseau (France).}
%}

\author{Cristina Di Girolami \footnote{Università G.D'Annunzio di Pescara, Dipartimento di Economia Aziendale (Italy) and Laboratoire Manceau de 
Math\'ematiques, Facult\'e des Sciences et Techniques,
 Universit\'e du Maine, D\'epartement de Math\'ematiques, Avenue Olivier Messiaen, 72085 Le Mans Cedex 9 (France).
E-mail: c.digirolami@unich.it}, \;
Giorgio Fabbri\footnote {EPEE, Universit\'e
d'Evry-Val-d'Essonne (TEPP, FR-CNRS 3126), D\'epartement d'Economie, 4
Bd. Fran\c{c}ois Mitterrand, 91025 Evry cedex (France). E-mail: giorgio.fabbri@univ-evry.fr} \; and \;  Francesco Russo\footnote{ENSTA ParisTech, Unit\'e de Math\'ematiques appliqu\'ees, 
828, boulevard des Mar\'echaux,
F-91120 Palaiseau   (France).
E-mail: francesco.russo@ensta-paristech.fr} }

\date{July 16th 2013}
\title{The covariation for  Banach space valued processes and applications}
%\begin{document}
\maketitle

{\bf Abstract}

This article focuses on a recent concept of 
 covariation  for processes taking values in a separable
Banach space $B$ and a corresponding 
 quadratic variation.
The latter is more general than the classical one of M\'etivier
and Pellaumail. 
 Those  notions are associated with some  subspace $\chi$ 
of the dual of the projective tensor product of $B$ with itself.
 We also introduce the notion of a
convolution type process, which is a natural generalization of
the It\^o process and the concept of $\bar \nu_0$-semimartingale,
which is a natural extension of the classical notion of
semimartingale. The framework is the 
stochastic calculus via regularization in Banach spaces.
 Two main applications  are mentioned: one related
to Clark-Ocone formula for finite quadratic variation processes;
the second one concerns the probabilistic representation
of a Hilbert valued partial differential equation of Kolmogorov type.

\medskip

[\textbf{2010 Math Subject Classification}: \ ] \ {60G22, 60H05, 60H07,  60H15, 60H30, 26E20, 35K90 46G05}
\medskip

{\bf Key words and phrases.}
Calculus via regularization; Infinite dimensional analysis;
 Tensor analysis; Clark-Ocone formula;
Dirichlet processes; It\^o formula; Quadratic variation;
Stochastic partial differential equations;  Kolmogorov equation.

%\tableofcontents

\section{Introduction and motivations}
		\label{sec:intr}

The notion of covariation and quadratic variation are fundamental in stochastic
calculus related to Brownian motion and semimartingales.
However, they also play a role in stochastic calculus
for non-semimartingales.

In the whole paper a fixed strictly positive time $T>0$ 
will be fixed.
 Given a real continuous process $X = (X_t)_{t \in [0,T]}$, 
there are two classical definitions of quadratic variation related to it,
denoted by $[X]$.
The first one, inspired to \cite{fo}, says that, when it exists,
$[X]_t$ is a continuous process being the limit, in probability,  
  of $\sum_{i=0}^{n-1} (X_{t_{i+1} \wedge t} - X_{t_{i} \wedge t})^2 $ where
$0 = t_0 < t_1 < \ldots < t_n  = T$ is element of a sequence of
subdivisions whose mesh $\max_{i = 1}^{n-1} (t_{i+1} - t_i)$
 converges to zero. 
The second one, less known,
% according to \cite{Rus05}, 
is based on stochastic calculus 
via regularization; it characterizes $[X]$ as the continuous process 
such that $[X]_t$ is the limit in
 probability  for every $t \in [0,T]$, when $\varepsilon
\rightarrow 0$, 
of $ \frac{1}{\varepsilon} \int_0^t (X_{(s+\varepsilon) \wedge T} - X_s)^2 ds, t \in [0,T]$.
In all the known examples both definitions give the same result.
We will use here the  second formulation, which looks operational and simple.
% we refer to \cite{Rus05}
%for a survey on that notion and stochastic regularization.
If $[X]$ exists then $X$ is called {\bf finite quadratic variation}
process. A real process $X$ such that $[X] \equiv 0$ is called 
{\bf zero quadratic variation process}; we also say in this case that
$X$ has a zero quadratic variation.
If $X$ is a (continuous) semimartingale, $[X]$ is the classical bracket.
Consequently, if $W$ is the real Brownian motion
then $[W]_t = t$.

Our generalization of quadratic variation of  processes
taking values in a Banach space $B$,
called $\chi$-quadratic variation, is recalled at 
 Section \ref{SecChi6}; 
see \cite{DGR1, DGR2, DGR} for more exhaustive information.
 This has 
significant infinite dimensional applications but it also has
 motivations in the study of real  stochastic processes with finite 
quadratic variation,
 even for Brownian motion and semimartingales.

Indeed, the class of real finite quadratic  variation
processes is quite rich even if many important fractional type processes
do not have this property.
Below we enumerate a list of such (continuous) processes.
A survey of stochastic calculus via regularization which focuses 
on covariation is \cite{Rus05}.
\begin{enumerate}
\item A bounded variation process has zero quadratic variation.
\item  A semimartingale with
 decomposition $S=M+V$, $M$ being  a local martingale and 
 $V$ a bounded variation process is a finite quadratic variation process
with $[S]=[M]$.  
\item A fractional Brownian motion $X = B^H, 0 < H < 1$ has finite 
quadratic variation if and only if $ H \ge \frac{1}{2} $.
If $  H > \frac{1}{2} $, it is a zero quadratic variation process.
\item 
An important subclass of finite quadratic variation processes
is constituted by  {\bf Dirichlet} processes, which should more
properly be called {\bf F\"ollmer-Dirichlet}, since they were introduced
by H. F\"ollmer \cite{FolDir}; they  were later 
further investigated by J. Bertoin,
see \cite{ber}. 
An a $({\mathscr F}_{t})$-\textbf{Dirichlet} process admits a (unique) decomposition
of the form $X = M+A$, where $M$ is an  $({\mathscr F}_{t})$-local martingale
and $A$ is a zero quadratic variation (such that $A_0 = 0$ a.s.).
In this case $[X] = [M]$. 
It is simple to produce Dirichlet processes $X$
with the same quadratic variation as Brownian motion. 
Consider for instance $X = W + A$ where $W$ is a classical Brownian motion and 
$A$ has zero quadratic variation. In general we postulate that
$A_0 = 0$ a.s. so that the mentioned decomposition is unique.
\item Another interesting example is the bifractional Brownian
motion, introduced first by \cite{hv}. Such a process $X$ 
%= B^{H,K}$
 depends on two parameters $0 < H < 1,  0 < K \le 1$ and it is often denoted by
 $B^{H,K}$.
If $ HK > \frac{1}{2} $ then   $B^{H,K}$ has zero quadratic variation.
 If
$K = 1$, that process is a fractional Brownian motion with parameter $H$.
A singular situation produces when $HK = \frac{1}{2}$.
In that case $X$ is a finite quadratic variation  process 
and $[X]_t = 2^{1-K} t$. That process is neither a semimartingale nor
a Dirichlet process, see \cite{rtudor}, see Section 3.1.
In particular not all the finite quadratic variation processes
are Dirichlet processes.
\end{enumerate}

A simple link between real  and Banach
space  valued processes is the following.
Let $ 0  < \tau \le T$. Let
$X  = (X_t, t \in [0,T])$  be a  real continuous process, that 
we naturally prolong in the sequel for $t \le 0$ setting $X_t = X_0$ 
and $X_t = X_T$ if $t \ge T$.
 The process $X(\cdot)$ defined by  
$ \X = X(\cdot)=\{X_{t}(u):=X_{t+u}; u \in [-\tau,0]\},$ 
constitutes the $\tau$-memory of  process $X$.
The natural state space for $\X$ is the non-reflexive 
separable space $B = C([-\tau,0])$. 
$X(\cdot)$ is 
the so called {\it window}   process associated with $X$
(of width $\tau > 0$).
If $X$ is a Brownian motion (resp. semimartingale, diffusion, Dirichlet
 process), then $X(\cdot)$ will be called window Brownian motion
(resp. window semimartingale, window diffusion, window Dirichlet process).

If $X = W$ is a classical Wiener process, $\X = X(\cdot)$ has no natural
quadratic variation, in the sense of Dinculeanu or M\'etivier and
Pellaumail,  see Subsection \ref{window}. However it will possess
 a more general quadratic variation called $\chi$-quadratic variation,
 which is related to  specific sub-Banach spaces
 $\chi$ of $(B \hat \otimes_\pi B)^\ast$.
\bigskip

A first  natural application of our covariational calculus is
motivated as follows. 
 If $h\in L^{2}(\Omega)$, the martingale representation theorem
 states the existence of a predictable 
process $\xi\in L^{2}(\Omega\times [0,T])$ such that 
$h=\mathbb{E}[h]+\int_{0}^{T}\xi_{s}dW_{s}$. If $h\in \mathbb{D}^{1,2}$ in
 the sense of Malliavin calculus, see for instance  \cite{nualartSEd, 
malliavin}, the celebrated {\bf Clark-Ocone} formula 
says  $\xi_{s}=\mathbb{E}\left[ D^{m}_s  h \vert {\mathscr F}_{s}\right] $
where $ D^{m}$ is the Malliavin gradient.
So 
\begin{equation}		\label{eq F11}
h=\mathbb{E}[h]+\int_{0}^{T}\mathbb{E}\left[ D^{m}_s h 
\vert {\mathscr F}_{s}\right] dW_{s}.
\end{equation}
A.S. Ustunel \cite{ustunelC-O}
obtains a generalization of \eqref{eq F11} when 
$h \in L^{2}(\Omega)$,
making use of the predictable projections of a Wiener distributions 
in the sense of S. Watanabe \cite{waTaInst}.\\
A natural question is the following: is Clark-Ocone formula
{\it robust} if       the law of $X=W$ is not anymore the
 Wiener measure but $X$ is still a finite quadratic variation process
 even not necessarily a semimartingale?
Is there a reasonable class of random variables $h$ for which
a representation of the type
$ h = H_{0}+  '' \int_{0}^{T} \xi_{s} dX_{s} ''$, $H_0 \in \R$, $\xi$ adapted?
Since $X$ is a not a semimartingale, previous integral
has of course to be suitably defined, in the spirit of a limit
of Riemann-Stieltjes non-anticipating sum. 
We have decided however to interpret the mentioned integral as a forward
integral in the regularization method, see Section \ref{finreg}.
We will denote it by $\int_{0}^{T} \xi_{s} d^-X_{s}$. \\
So let us suppose that  $X_{0}=0$, $[X]_{t}=t$ and
 $\tau = T$ for simplicity.
We look for a 
 reasonably rich   class of functionals 
 $ G: C([-T,0]) \longrightarrow \R $ such that the r.v. 
$ h:=G(X_{T}(\cdot))$    %\quad \mbox{Borel functional}
admits a representation of the type
%\begin{block}{}
\begin{equation}\label{I1}
 h = G_{0} + \int_{0}^{T}\xi_{s}d^-X_{s}, 
\end{equation}
provided that
 $G_{0}\in \R$ and $\xi$ is an adapted process with respect to the canonical filtration of $X$.
The idea is to express
 $h=G(X_{T}(\cdot))$ as $ u(T, X_{T}(\cdot))$ or in some cases
\[
h=G(X_{T}(\cdot)) = \lim_{t\uparrow T}u(t, X_{t}(\cdot)),
\]
where $u\in C^{1,2}\left([0,T[\times C([-T,0])  \right)$ solves 
an infinite dimensional 
partial differential equation,
and \eqref{I1} holds 
with $\xi_t = D u(t,\eta)(\{0\}), t \in ]0,T[$. 
At this point we will have 
$ h = u(0,X_{0}(\cdot))+\int_{0}^{T} \xi_s
d^{-}X_{s} $,
recalling that 
$ D\, u:[0,T]\times C([-T,0])\longrightarrow (C([-T,0])^\ast
 = \mathcal{M}([-T,0])$. This is the object of Section \ref{SIDPDE}.
A first step in this direction was done in \cite{DGRnote} and more in details in Chapter 9 of \cite{DGR}. 
\bigskip

A second interesting application concerns convolution processes, see Section \ref{Conv}. Consider $H$ and $U$ two
 separable Hilbert spaces and a $C_0$-semigroup $(e^{tA})$ on $H$, see Sections \ref{SGPF} for
 definitions and references. Let $\W$ be an $U$-values $Q$-Wiener process for some positive bounded operator $Q$ on
$U$. Let $\sigma =  (\sigma_t, t \in [0,T])$  %with paths a.s. in  $\shl^2(H,K)$ 
and $b =  (b_t, t \in [0,T])$ %with paths taking values in $H$ 
two suitable predictable integrands,
see Section \ref{DBZI} for details.
An $H$-valued convolution process has  the following form:
\begin{equation} \label{FEDPS}
 \X_t = e^{tA} x_0 + \int_0^t e^{(t-r)A} \sigma_r d \W_r + \int_0^t e^{(t-r)A} b_r dr,
t \in [0,T],
\end{equation}
for some $x_0 \in H$. Convolution type processes are an extension
of It\^o processes, which appear when $A$ vanishes. 
Mild solutions of infinite dimensional evolution equations are in natural way convolution processes. 
They have no scalar quadratic variation even if  driven by a one-dimensional Brownian motion. Still it can be proved that they admit a ${\chi}$-quadratic variation for some suitable space $\chi$, see Proposition \ref{PConv}.\\
Another general concept of processes that we will introduce   is the one of
 $\bar \nu_0$-semimartingales. An $H$ valued process $\X$ is said
$\bar \nu_0$-semimartingale if there is Banach space $\bar \nu_0$ including $H$
(or in which $H$ is continuously injected) so that $\X$
is the sum of an $H$-valued local martingale and a bounded variation  $\bar \nu_0$-valued
process. A convolution process will be shown to
be a $\bar \nu_0$-semimartingale, where the dual ${\bar \nu_0}^\ast $ equals $D(A^\ast)$,
 see item 2.
of Proposition \ref{PConv}.

Let us come back for a moment to real valued processes.
A real process $X$ is called {\bf weak Dirichlet} (with respect to a given filtration), 
 if it can be written as the sum of a local martingale and a process $A$ such that $\left[ A,N\right] =0$ 
for every continuous local martingale.
A significant result of F. Gozzi and F. Russo, see \cite{rg2}, is the following.
If $f:[0,T] \times \R \rightarrow \R$ is of class $C^{0,1}$, 
then $Y_t = f(t,X_t), t \in [0,T]$ is a weak Dirichlet process.
A similar result, in infinite dimension, is obtained replacing the process $X$ with its associated window $X(\cdot)$.
%see \cite{DGR2}.
The notion of Dirichlet process extends  to the infinite dimensional
 framework via the notion of 
{\it $\nu$-weak Dirichlet process}, see Definition \ref{def:nu-dir}.
 An interesting example of $\nu$-weak Dirichlet process is given, once more, by convolution processes, 
see Proposition \ref{PConv}.

Generalizing that result of \cite{rg2}, it can be proved that, given
 $u:[0,T] \times \R \rightarrow \R$
 of class $C^{0,1}$ and being  $\X$ a suitable
 $\nu$-weak Dirichlet process with finite $\chi$-quadratic variation,
where $\chi$ is Chi-subspace associated with $\nu$,
 then $Y_t = u(t, \X_t)$ is a real weak Dirichlet process.
 Moreover its (Fukushima-Dirichlet type) 
 decomposition is provided in Theorem \ref{th:prop6}. That theorem can be seen as a substitution-tool
 of It\^o's formula if $u$ is not smooth and is a key tool for the application 
we provide in Section \ref{sec:Kolmogorov}.
Examples of such $\nu$-weak Dirichlet processes are convolution type
processes, or more generally $\bar \nu_0$-semimartingales, see Proposition
\ref{P67}, item 2.
In Section \ref{sec:Kolmogorov}, we study the solution
 of a non-homogeneous
 Kolmogorov equation and we provide a uniqueness result for the related solution. The proof of that result is based on  a 
representation 
 for (strong) solutions of 
the Kolmogorov equation that is obtained thanks to the uniqueness of the 
 decomposition of a real weak Dirichlet process. The uniqueness result covers
 cases that, as far as we know, were not yet included in the
 literature.
For instance,  in our results, the initial datum $g$ of the Kolmogorov
 equation is asked   be continuous but we do not require
 any boundedness assumption on it.
 This kind of problem cannot be studied if the problem is approached, 
as in \cite{CerraiGozzi95, Gozzi97}, looking at the properties of 
the transition semigroup on the space $C_b(H)$ (resp. on $B_b(H)$)
 of continuous and bounded (resp. bounded) functions defined on $H$, because,
 in this case, the initial datum always needs to be bounded.
 More details are contained in Section \ref{sec:Kolmogorov}.
In the same spirit, further applications to stochastic verification theorems, 
in which the Kolmogorov type equation, is replaced by an Hamilton-Jacobi-Bellman equation, can be realized, see for instance \cite{RusFab}.

\section{Preliminaries} \label{SPrelim}

\subsection{Functional analysis background}
 \label{SPrelim1}

In the whole paper, $B$ (resp. $H$) will stand  for a separable Banach
(resp. Hilbert) space.
$\vert \cdot \vert_B$ (resp. $\vert \cdot \vert_H$)
 will generally denote the norm related to $B$ (resp. $H$).
However, if the considered norm appears clearly, we will often only
 indicate it by
$\vert \cdot \vert$. Even the associated inner product with 
$\vert \cdot \vert_H$ will be indicated by $\langle \cdot, \cdot \rangle_H$
or simply by $\langle \cdot, \cdot \rangle$.

Given an element $a$ of a Hilbert space $H$, we generally denote
by $a^\ast$, the corresponding element of $H^\ast$ via Riesz identification.
We will use the identity  ${}_{H^\ast}\langle a^\ast, b \rangle_H
=  \langle a, b \rangle_H =  \langle a, b \rangle_H  $ without comments.
Let $B_1, B_2$ be two separable real Banach spaces.
 We denote by $B_1\otimes B_2$ the algebraic tensor product defined
 as the set of the elements of the form
 $\sum_{i=1}^n x_i\otimes y_i$, for some positive integer $n$
 where $x_i$ and  $y_i$ are respectively elements of $B_1$ and $B_2$.
 The product  $\otimes\colon B_1\times B_2 \to B_1\otimes B_2$ is bilinear.

A natural norm on  $B_1\otimes B_2$ is the projective norm $\pi$: for all $u\in B_1\otimes B_2$, we denote
 by $\pi(u)$ the norm
\[
\pi(u) := \inf \left \{ \sum_{i=1}^n  |x_i|_{B_1} |y_i|_{B_2} \; : \; 
u = \sum_{i=1}^{n}  x_i \otimes y_i  \right \}.
\]
This belongs to the class of the so-called {\it reasonable norms} $\vert \cdot \vert$, in particular 
verifying  $\vert x_1 \otimes x_2 \vert = \vert x_1 \vert_{B_1} \vert x_2 \vert_{B_2}  $, if $x_1 \in B_1, x_2 \in B_2$.
We denote by $B_1\hat\otimes_\pi B_2$ the Banach space obtained as completion of $B_1\otimes B_2$
 for the norm $\pi$, see \cite{Ryan02} Section 2.1.
We remark that its topological dual $(B_1\hat\otimes_\pi B_2)^*$ is isomorphic to the space 
of continuous bilinear forms ${\mathcal Bi}(B_1,B_2)$ of
 continuous bilinear forms,
 equipped with the norm $\Vert \cdot  \Vert_{B_1,B_2}$ where 
$\Vert \Phi \Vert_{B_1,B_2} = \sup_{\begin{subarray}{l}
                                     a_1 \in B_1, a_2 \in B_2 \\
                                     |a_1|_{B_1}, |a_2|_{B_2}\leq 1,
                                     \end{subarray}
}|\Phi(a_1, a_2)|$.

\begin{lem}
\label{lm:aggiunto}
Let $B_1$ and $B_2$ be two separable, reflexive real Banach spaces. 
Given $a^* \in B_1^*$ and  $b^* \in B_2^*$ we
 can associate to $a^*\otimes b^*$ the elements $j(a^* \otimes b^*)$ of $(B_1\otimes B_2)^*$ acting 
as follows on a generic element $u = \sum_{i=1}^{n}  x_i \otimes y_i
 \in B_1\otimes B_2$:
\[
\left\langle j(a^*\otimes b^*), u \right \rangle = \sum_i^n \left\langle a^*,
 x_i \right\rangle \left\langle b^*, y_i \right\rangle.
\]
$j(a^*\otimes b^*)$ extends by continuity to the whole $B_1\otimes B_2$ 
and its norm in $(B_1\otimes B_2)^*$
 equals $|a^*|_{B_1^*}|b^*|_{B_2^*}$. In particular if $\nu_i$ is a (dense) 
subspaces of $B_i^\ast, i = 1, 2,  $ then
the projective tensor product $\nu_1 \hat \otimes_\pi \nu_2$  can be seen
 as a subspace of
 $(B_1 \hat \otimes_\pi B_2)^\ast$.
\end{lem}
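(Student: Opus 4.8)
The plan is to treat the statement in two stages: first establish the construction and the norm identity for a single elementary tensor $a^\ast \otimes b^\ast$, and then bootstrap to the projective tensor product of the subspaces $\nu_1,\nu_2$. For the first stage I would start from well-definedness. The map $B_1\times B_2 \to \R$, $(x,y)\mapsto \langle a^\ast,x\rangle\,\langle b^\ast,y\rangle$, is continuous and bilinear, and satisfies $|\langle a^\ast,x\rangle\langle b^\ast,y\rangle|\le |a^\ast|_{B_1^\ast}|b^\ast|_{B_2^\ast}|x|_{B_1}|y|_{B_2}$. By the universal property of the algebraic tensor product it factors through a linear functional on $B_1\otimes B_2$ that does not depend on the chosen representation $u=\sum_i x_i\otimes y_i$ and is given by the stated formula; equivalently, under the identification $(B_1\hat\otimes_\pi B_2)^\ast\cong \mathcal{Bi}(B_1,B_2)$ recalled above, $j(a^\ast\otimes b^\ast)$ is exactly this continuous bilinear form.

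For the norm identity I would prove the two inequalities separately. For the upper bound, given any representation $u=\sum_i x_i\otimes y_i$ one estimates $|\langle j(a^\ast\otimes b^\ast),u\rangle|\le |a^\ast|_{B_1^\ast}|b^\ast|_{B_2^\ast}\sum_i |x_i|_{B_1}|y_i|_{B_2}$, and taking the infimum over representations gives $|\langle j(a^\ast\otimes b^\ast),u\rangle|\le |a^\ast|_{B_1^\ast}|b^\ast|_{B_2^\ast}\,\pi(u)$. This simultaneously shows that $j(a^\ast\otimes b^\ast)$ is $\pi$-continuous, hence extends to all of $B_1\hat\otimes_\pi B_2$, and that $\Vert j(a^\ast\otimes b^\ast)\Vert \le |a^\ast|_{B_1^\ast}|b^\ast|_{B_2^\ast}$. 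For the matching lower bound I would use reflexivity through James's theorem: there exist $x\in B_1$ with $|x|_{B_1}=1$ and $\langle a^\ast,x\rangle=|a^\ast|_{B_1^\ast}$, and likewise $y\in B_2$. Setting $u_0:=x\otimes y$, the reasonable-norm property quoted in the excerpt gives $\pi(u_0)=|x|_{B_1}|y|_{B_2}=1$, while $\langle j(a^\ast\otimes b^\ast),u_0\rangle=|a^\ast|_{B_1^\ast}|b^\ast|_{B_2^\ast}$; hence $\Vert j(a^\ast\otimes b^\ast)\Vert\ge |a^\ast|_{B_1^\ast}|b^\ast|_{B_2^\ast}$. (Reflexivity is convenient only to attain the suprema exactly; otherwise the same conclusion follows by choosing $\varepsilon$-maximizers and letting $\varepsilon\to 0$.)

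For the second stage, the assignment $(a^\ast,b^\ast)\mapsto j(a^\ast\otimes b^\ast)$ is bilinear into $(B_1\hat\otimes_\pi B_2)^\ast$, so by the universal property it extends to a linear map $J$ on the algebraic tensor product $\nu_1\otimes\nu_2$. For $w=\sum_i a_i^\ast\otimes b_i^\ast$, the triangle inequality combined with the norm identity just proved yields $\Vert J(w)\Vert\le \sum_i |a_i^\ast|_{B_1^\ast}|b_i^\ast|_{B_2^\ast}$; passing to the infimum over representations gives $\Vert J(w)\Vert\le \pi(w)$, so $J$ is a contraction and extends to all of $\nu_1\hat\otimes_\pi\nu_2$. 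Its image is the announced copy of $\nu_1\hat\otimes_\pi\nu_2$ inside $(B_1\hat\otimes_\pi B_2)^\ast$, and the isometry on elementary tensors established above is what makes this identification the natural one.

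The step I expect to be the genuine obstacle is precisely the assertion that this image is a \emph{subspace}, i.e. that $J$ is injective rather than merely a contraction: an element $w=\sum_n a_n^\ast\otimes b_n^\ast$ of $\nu_1\hat\otimes_\pi\nu_2$ is sent to the bilinear form $(x,y)\mapsto\sum_n\langle a_n^\ast,x\rangle\langle b_n^\ast,y\rangle$, and deducing $w=0$ from the vanishing of this form is exactly the recognition of a nuclear bilinear form by its values. This is where density of $\nu_i$ in $B_i^\ast$ and the structure of the spaces entering the applications must be used, and it is the point I would examine most carefully before declaring the embedding complete.
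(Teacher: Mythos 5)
The paper does not prove this lemma in-house: its ``proof'' is a pointer to Lemma~2.4 of the cited reference, so there is no internal argument to compare yours against line by line. On the first part of the statement your argument is correct and is the standard one: factor the continuous bilinear form $(x,y)\mapsto\langle a^\ast,x\rangle\langle b^\ast,y\rangle$ through the algebraic tensor product, get the upper bound $\Vert j(a^\ast\otimes b^\ast)\Vert\le |a^\ast|_{B_1^\ast}|b^\ast|_{B_2^\ast}$ from the infimum defining $\pi$, and get the lower bound by testing on a single elementary tensor and using the reasonable-norm identity $\pi(x\otimes y)=|x|_{B_1}|y|_{B_2}$. One small correction: norm attainment of a functional on a reflexive space is \emph{not} James's theorem --- that is the converse; the direct statement follows from weak compactness of the unit ball. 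Your parenthetical remark with $\varepsilon$-maximizers is the right general argument and shows reflexivity is not needed for the norm identity at all.

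The genuine issue is the one you flag at the end and leave open: injectivity of the extended map $J:\nu_1\hat\otimes_\pi\nu_2\to(B_1\hat\otimes_\pi B_2)^\ast$, without which ``can be seen as a subspace'' is not justified. Two points here. First, density of $\nu_i$ in $B_i^\ast$ is not the mechanism that gives injectivity, so the sentence ``this is where density \ldots must be used'' points in the wrong direction: the obstruction is of approximation-property type. Already for $\nu_i=B_i^\ast$ the canonical map $B_1^\ast\hat\otimes_\pi B_2^\ast\to\mathcal{Bi}(B_1,B_2)$ can fail to be injective for general Banach spaces; it is injective when one of the relevant spaces has the approximation property (this is the content of the identification of $X^\ast\hat\otimes_\pi Y$ with the nuclear operators, Corollary~4.8 in the Ryan reference the paper itself uses). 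Second, there is an additional potential failure you do not mention: the map $\nu_1\hat\otimes_\pi\nu_2\to B_1^\ast\hat\otimes_\pi B_2^\ast$ induced by the inclusions $\nu_i\hookrightarrow B_i^\ast$ need not be injective either, since projective tensor products do not respect subspaces. In the situations the paper actually uses --- $B_i$ a separable Hilbert space with $\nu_0=D(A^\ast)$, or $B=C([-T,0])$ with $\nu$ built from measures and $L^2$ spaces --- all spaces involved have the (metric) approximation property, so the embedding holds; but a complete proof of the lemma as stated must either invoke this explicitly or restrict the generality, and your write-up stops exactly where that argument would have to begin.
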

\begin{proof}
See \cite{RusFab} Lemma 2.4.
\end{proof}
\begin{rem} \label{RSuccessivo}
We remark that $B_1\hat{\otimes}_{\pi}B_2$ fails to be Hilbert even if 
$B_1$ and $B_2$ are  Hilbert spaces. It is not even  reflexive space.
Fore more information about tensor topologies, we refer e.g. to 
\cite{Ryan02}.
\end{rem}

Let us consider now two separable Banach spaces ${B_1}$ and ${B_2}$. 
With $C(B_1; B_2)$, we symbolize the set of the locally bounded 
continuous $B_2$-valued functions defined on $B_1$. This is a Fr\'echet 
type space with the seminorms
\begin{equation} \label{NormC}
\|u\|_r := \sup \left \{ \vert u(x) \vert_{B_2} \; : \; x\in B_1, \; with \, |x|_{B_1} \leq r \right \}
\end{equation}
for $r\in \mathbb{N^\ast}$.

If
 ${B_2}=\mathbb{R}$ we will often simply use the notation $C({B_1})$
 instead of $C({B_1};\mathbb{R})$. Similarly, given a real interval $I$,
typically $I = [0,T]$ or $I = [0,T[$, 
%for a real number $T>0$ we 
we use the notation $C(I \times {B_1}; {B_2})$ for the set of the
 continuous ${B_2}$-valued
 functions defined on $I\times {B_1}$ while we use the lighter notation 
$C(I \times {B_1})$ when ${B_2}=\mathbb{R}$. 
%This is a Fr\'echet type space equipped with the topology of 
%the uniform convergence on compact sets.
For a function $u: I \times B_1 \rightarrow \R, \ 
(t,\eta) \mapsto u(t,\eta) $, we denote by
$(t,\eta) \mapsto Du(t,\eta) $ (resp. $(t,\eta) \mapsto D^2 u(t,\eta) $)
(if it exists) the first (resp. second) Fr\'echet 
derivative  w.r.t. the variable $\eta \in {B_1}$).
Eventually a function $
(t,\eta) \mapsto u(t,\eta)
 \in C(I \times {B_1})$  (resp. $u  \in C^1(I \times {B_1})$) 
  will be said to belong to  $C^{0,1}(I \times {B_1})$
 (resp. $C^{1,2}(I \times {B_1})$)
 if $Du$  exists and it is continuous,
i.e. it belongs to $C(I \times {B_1};{B_1}^*)$ 
 (resp.  $D^2 u (t,\eta$) exists for any $(t,\eta) \in
[0,T] \times B_1$ and it is continuous, i.e.  it belongs
to $C(I \times {B_1}; {\mathcal Bi}({B_1},{B_1}))$.\\
By convention all the continuous functions defined
on an interval $I$ are naturally extended 
by continuity to $\R$.

We denote 
by $\mathcal{L}({B_1}; {B_2})$ the space of linear bounded maps from 
${B_1}$ to ${B_2}$.
It is of course a Banach space and  it is a topological 
subspace of   $C({B_1}; {B_2})$;   we will denote by
$\Vert \cdot \Vert_{\mathcal{L}({B_1}; {B_2})}$ the corresponding norm.
We will often indicate in the sequel by a double bar, i.e.
 $\Vert \cdot \Vert$, the norm of an operator or more 
generally   the seminorm of a function.
As a particular case, if we denote by $U, H$ two separable Hilbert spaces,
 $\shl(U;H)$ will be the space of linear bounded maps
from $U$ to $H$. If $U= H$, we set $\shl(U) := \shl(U;U)$.
$\shl_2(U;H)$ will be the set of
 {\it Hilbert-Schmidt} operators from $U$ to $H$ and $\shl_1(H)$
 (resp. $\shl_1^+(H)$) will be the space of (non-negative) 
{\it nuclear} operators on $H$.
For  details about the notions of Hilbert-Schmidt and nuclear operator, the reader may consult \cite{Ryan02}, Section 2.6 and \cite{DaPratoZabczyk92} Appendix C.
If $ T \in \shl_2(U;H)$ and $T^\ast:H\rightarrow U$ is the adjoint operator, then $T T^\ast \in \shl_1(H)$ and the Hilbert-Schmidt
norm of $T$ gives
$
\Vert T \Vert^2_{\shl_2(U;H)} = \Vert T T ^\ast \Vert_{\shl_1(H)}.$
We recall that, for a generic element $T\in {\mathcal L}_1(H)$
 and given 
 a basis $\left \{ e_n \right \}$ of $H$ the sum
$
\sum_{n=1}^{\infty} \left \langle T e_n, e_n  \right \rangle
$
is absolutely convergent and independent of the chosen basis 
$\left \{ e_n \right \}$. It is called \emph{trace} of $T$
 and denoted by ${\mathrm Tr}(T)$. 
$\shl_1(H)$ is a Banach space and we denote by $\Vert \cdot \Vert_{\shl_1(H)}$
the corresponding norm.
If $T$ is non-negative then ${\mathrm Tr}(T) = \| T \|_{{\mathcal L}_1 (H)}$ and
 in general we have the inequalities
% $|Tr(T)| \leq \| T \|_{{\mathcal L}_1 (H)}$ and, see Proposition C.1,
%\cite{DaPratoZabczyk92}.
\begin{equation}
\label{eq:trace-h1}
|{\mathrm Tr}(T)| \leq \| T \|_{{\mathcal L}_1 (H)}, \quad
\sum_{n=1}^{\infty} |\left \langle T e_n, e_n  \right \rangle| \leq \| T
 \|_{{\mathcal L}_1 (H)},
\end{equation}
see Proposition C.1,
\cite{DaPratoZabczyk92}.
As a consequence,  if $T$ is a non-negative operator,  the  relation below
\begin{equation} \label{EHSQ}
\Vert T \Vert_{\shl_2(U;H)}^2 = {\mathrm Tr} (T T^\ast),
\end{equation}
will be very useful in the sequel.

Observe that every  element $u \in H\hat\otimes_\pi H$  is isometrically associated 
with an element $T_u$ in the space of nuclear operators  
$\shl_1(H)$.
The identification (which is in fact an isometric isomorphism) associates to any element $u$
 of the form $\sum_{i=1}^{\infty} a_n\otimes b_n$ in  $H\hat\otimes_\pi H$ the nuclear operator $T_u$ defined as 
 \begin{equation}\label{Isomorphism}
 T_u(x) := \sum_{i=1}^{\infty} \left\langle x, a_n \right\rangle b_n,
 \end{equation}
see for instance \cite{Ryan02}, Corollary 4.8 Section 4.1 page 76. 

We recall that, to each element $\varphi $ of $(H\hat\otimes_\pi H)^*$,
 we can associate  a bilinear continuous map
 $B_\varphi$ 
and a linear continuous operator 
$L_\varphi: H \rightarrow H$ 
 such that
\begin{equation}
\label{eq:expressionLB}
\left\langle L_\varphi (x), y \right\rangle = B_\varphi(x,y) = 
\varphi(x\otimes y),\qquad \text{for all $x, y \in H$},
\end{equation}
see \cite{Ryan02},  the discussion before Proposition 2.11 
Section 2.2. at page 24.
One can prove  the following, see \cite{RusFab},
 Proposition 2.6 or \cite{DGR}, Proposition 6.6.

\begin{prop}
\label{prop:26}
 Let  $u \in  H\hat\otimes_\pi H$  and $\varphi \in (H\hat\otimes_\pi H)^*$
with associated maps
 $T_u \in  \mathcal{L}_1(H), L_\varphi \in   \mathcal{L}(H;H)$.
Then
\[
{}_{(H\hat\otimes_\pi H)^*} 
\langle \varphi,u \rangle_{H\hat\otimes_\pi H} = Tr \left ( T_u L_\varphi \right ).
\]
\end{prop}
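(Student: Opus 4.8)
The plan is to establish the identity first on elementary tensors $u = a \otimes b$ and then to extend it to the whole of $H\hat\otimes_\pi H$ by linearity, density and continuity.

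First I would observe that, for a fixed $\varphi$, both sides of the asserted equality are continuous linear functionals of $u$. The left-hand side $u \mapsto {}_{(H\hat\otimes_\pi H)^*}\langle \varphi, u \rangle_{H\hat\otimes_\pi H}$ is continuous by the very definition of the dual norm. For the right-hand side, recall from \eqref{Isomorphism} that $u \mapsto T_u$ is an isometric isomorphism of $H\hat\otimes_\pi H$ onto $\mathcal{L}_1(H)$, so that $\|T_u\|_{\mathcal{L}_1(H)} = \pi(u)$; composing with the bounded operator $L_\varphi$ and taking the trace gives, using the ideal property $\|T_u L_\varphi\|_{\mathcal{L}_1(H)} \le \|T_u\|_{\mathcal{L}_1(H)}\|L_\varphi\|_{\mathcal{L}(H;H)}$ together with \eqref{eq:trace-h1}, the estimate $|\mathrm{Tr}(T_u L_\varphi)| \le \|L_\varphi\|_{\mathcal{L}(H;H)}\,\pi(u)$. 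Hence $u \mapsto \mathrm{Tr}(T_u L_\varphi)$ is continuous as well. Since the algebraic tensor product $H \otimes H$ is dense in $H\hat\otimes_\pi H$ and both maps are linear, it then suffices to verify the equality when $u = a \otimes b$.

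For such an elementary tensor, \eqref{Isomorphism} yields the rank-one operator $T_{a\otimes b}(x) = \langle x, a \rangle b$, so that $T_{a\otimes b}\circ L_\varphi$ is again of rank one. Computing its trace against an arbitrary orthonormal basis $\{e_k\}$ of $H$ (the sum being absolutely convergent and basis-independent, as recalled before \eqref{eq:trace-h1}) collapses, via Parseval's identity, to a single scalar which by the defining relation \eqref{eq:expressionLB} is precisely $\varphi(a \otimes b) = {}_{(H\hat\otimes_\pi H)^*}\langle \varphi, a \otimes b \rangle_{H\hat\otimes_\pi H}$. This settles the elementary case, and the continuity of the two functionals established above propagates the identity to every $u \in H\hat\otimes_\pi H$.

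The only delicate point, which is exactly where I would concentrate the rigour, is the passage from finite algebraic tensors to the completion: for a general $u = \sum_n a_n \otimes b_n$ one must ensure that the series defining $T_u$, the series over the orthonormal basis defining the trace, and the series expressing $\langle \varphi, u \rangle$ may all be handled simultaneously without any conditionally convergent rearrangement. This is guaranteed precisely by the continuity estimate of the first step combined with the absolute convergence and basis-independence of the trace on $\mathcal{L}_1(H)$, which is why I prefer the two-step scheme (elementary tensors plus continuity) over a direct manipulation of the resulting double series.
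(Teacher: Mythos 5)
The paper itself gives no proof of this proposition: it simply refers the reader to \cite{RusFab}, Proposition 2.6 and \cite{DGR}, Proposition 6.6. Your two-step scheme --- continuity of both sides in $u$ (using that $u\mapsto T_u$ is isometric onto $\mathcal{L}_1(H)$, the ideal estimate $\|T_uL_\varphi\|_{\mathcal{L}_1(H)}\le\|T_u\|_{\mathcal{L}_1(H)}\|L_\varphi\|$, and \eqref{eq:trace-h1}), followed by verification on elementary tensors and extension by density of $H\otimes H$ --- is the standard and correct route, and it is essentially what the cited references do; your remark about why one should not manipulate the double series directly is well taken.

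One point deserves care in the elementary-tensor step. With the conventions as written in this paper, $T_{a\otimes b}(x)=\langle x,a\rangle b$ from \eqref{Isomorphism} and $\langle L_\varphi(x),y\rangle=\varphi(x\otimes y)$ from \eqref{eq:expressionLB}, the rank-one operator $T_{a\otimes b}L_\varphi$ satisfies $(T_{a\otimes b}L_\varphi)(b)=\langle L_\varphi b,a\rangle\, b$, so its trace is $\langle L_\varphi b,a\rangle=B_\varphi(b,a)=\varphi(b\otimes a)$ rather than $\varphi(a\otimes b)$ as you assert after "Parseval's identity collapses". The two agree exactly when $B_\varphi$ is symmetric --- which is the case in every application in the paper, where $\varphi$ is a second Fr\'echet derivative or is paired with a symmetric covariance operator --- but in general the identity as computed carries a transposition, reflecting the fact that the two identifications \eqref{Isomorphism} and \eqref{eq:expressionLB} place $a$ and $b$ in opposite slots. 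You should either track this explicitly (and note that the statement is insensitive to it whenever $T_u$ or $L_\varphi$ is self-adjoint), or adopt the transposed identification $T_{a\otimes b}(x)=\langle x,b\rangle a$, under which your computation gives $\varphi(a\otimes b)$ on the nose. Apart from this bookkeeping, the argument is complete.
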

\begin{prop} \label{RExchange}
Let $g: [0,T] \mapsto \shl_1^+(H)$ measurable such that 
\begin{equation}
\label{eq:inrgh1}
\int_0^T \| g(r) \|_{\shl_1(H)} dr <\infty. 
\end{equation}
Then $\int_0^T g(r) dr \in \shl_1^+(H)$ and its trace equals
 $\int_0^T {\mathrm Tr}(g(r)) dr$.
\end{prop}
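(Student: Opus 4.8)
The plan is to realize $\int_0^T g(r)\,dr$ as a Bochner integral in the Banach space $\shl_1(H)$ and then to transfer the two assertions — non-negativity and the trace identity — from the integrand to the integral by commuting suitable bounded linear functionals with the integral. Since $H$ is separable, $\shl_1(H)$ is a separable Banach space; together with the measurability of $g$ this guarantees, via the Pettis theorem, that $g$ is strongly measurable, and assumption \eqref{eq:inrgh1} provides the integrability of $r \mapsto \| g(r) \|_{\shl_1(H)}$. Hence $g$ is Bochner integrable and $G := \int_0^T g(r)\,dr$ is a well-defined element of $\shl_1(H)$. The single tool I would use repeatedly is the elementary property of the Bochner integral: if $\Lambda$ is a bounded linear functional on $\shl_1(H)$, then $\Lambda(G) = \int_0^T \Lambda(g(r))\,dr$.

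First I would check that $G \in \shl_1^+(H)$. For a fixed $x \in H$, the map $\shl_1(H) \ni T \mapsto \langle Tx, x\rangle$ is linear and bounded, since $|\langle Tx, x\rangle| \leq \| T \|_{\shl(H)} |x|_H^2 \leq \| T \|_{\shl_1(H)} |x|_H^2$, the operator norm being dominated by the nuclear norm. Applying the commutation property yields
\[
\langle Gx, x\rangle = \int_0^T \langle g(r)x, x\rangle\,dr \geq 0,
\]
because each $g(r)$ is non-negative. Symmetry of $G$ follows in the same way, either by testing against $T \mapsto \langle Tx, y\rangle - \langle Ty, x\rangle$, or by observing that the adjoint map $T \mapsto T^\ast$ is a linear isometry of $\shl_1(H)$ commuting with the integral, so that $G^\ast = \int_0^T g(r)^\ast\,dr = G$. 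Therefore $G$ is a non-negative nuclear operator.

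It remains to identify the trace. By the first inequality in \eqref{eq:trace-h1}, $T \mapsto {\mathrm Tr}(T)$ is itself a bounded linear functional on $\shl_1(H)$, so the commutation property gives directly
\[
{\mathrm Tr}(G) = {\mathrm Tr}\Big( \int_0^T g(r)\,dr \Big) = \int_0^T {\mathrm Tr}(g(r))\,dr,
\]
the right-hand side being finite since ${\mathrm Tr}(g(r)) = \| g(r) \|_{\shl_1(H)}$ by non-negativity, which is integrable by \eqref{eq:inrgh1}. Alternatively, and perhaps more transparently, I would fix an orthonormal basis $\{e_n\}$ of $H$, apply the commutation property to the bounded functionals $T \mapsto \langle Te_n, e_n\rangle$, and then interchange the resulting series with the integral by Tonelli's theorem, which is legitimate because all the integrands $\langle g(r)e_n, e_n\rangle$ are non-negative; this route makes explicit why no cancellation issue arises.

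In truth there is no deep obstacle: the statement amounts to the fact that trace and non-negativity, being expressible through bounded linear functionals, are preserved under Bochner integration. The only points deserving genuine care are the verification that $g$ is Bochner (not merely Pettis) integrable in $\shl_1(H)$, which rests on the separability of $\shl_1(H)$ inherited from that of $H$, and the justification of the interchange of summation and integration in the trace computation, handled cleanly by the non-negativity of the integrand through Tonelli's theorem.
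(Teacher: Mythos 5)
Your proof is correct and follows essentially the same route as the paper's: establish Bochner integrability of $g$ in $\shl_1(H)$ from \eqref{eq:inrgh1}, then transfer non-negativity and the trace identity by commuting the integral with the relevant functionals (your alternative partial-sum/Tonelli computation is precisely the paper's argument, which uses dominated convergence via \eqref{eq:trace-h1} in place of Tonelli). Your primary version, observing that $\mathrm{Tr}$ is itself a bounded linear functional on $\shl_1(H)$ and so commutes with the Bochner integral in one stroke, is a slight but clean streamlining of the same idea.
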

\begin{proof} 
$\int_0^T g(r) dr \in \shl_1(H)$ by the the first inequality of
\eqref{eq:trace-h1} and by Bochner integrability property. Clearly
the mentioned integral is a non-negative operator.
The remainder
 follows quickly from the relation between the trace and the
 $\shl_1(H)$ norm that we have recalled above; indeed if $(e_n)$ is an orthonormal
basis,
\[
\sum_{n=1}^{N} \left \langle \int_0^T g(r)\,  d r \, e_n, e_n  \right \rangle 
 = \int_0^T \sum_{n=1}^{N} \left \langle g(r) e_n, e_n  \right \rangle d r
\]
and we can pass to the limit thanks to (\ref{eq:trace-h1}),
 (\ref{eq:inrgh1}) and Lebesgue's dominated convergence theorem.
\end{proof}

\subsection{General probabilistic framework}

\label{SGPF}

In the whole paper we will fix $T>0$. $(\Omega, \shf, \P)$
will be a fixed probability space and $\shp$ will denote the
predictable $\sigma$-field on $\Omega \times  [0,T]$.  
$(\mathscr F_t) = (\mathscr F_t, {t \in [0,T]})$ will  be a filtration fulfilling the usual 
conditions. If $B$ is a  Banach space,  $\shb(B)$
will denote its Borel $\sigma$-algebra. 
A $B$-valued random variable $C$ is integrable if $\E(\vert C \vert)$ is finite
and the quantity $\E(C)$ exists as an element in $B$.
It fulfills in particular the Pettis property: $ \varphi(\E(C)) = \E(\varphi(C))$ for any $\varphi\in B^*$.

Given a $\sigma$-algebra $\shg$, the random element
$\E(C \vert \shg): \Omega \rightarrow B$
denotes the conditional expectation of $C$ with respect to $\shg$.
The concept of conditional expectation for $B$-valued random elements, when 
$B$ is a separable Banach space, are recalled for instance in \cite{DaPratoZabczyk92} Section 1.3.
In particular,  for every $\varphi \in B^\ast$ we have
$\E\left(\Psi {}_{B^*} \langle \varphi, C \rangle_B \right) = \E\left( \Psi 
{}_{B^*} \langle \varphi, \E ( C  \vert \shg)  \rangle_B  \right)$,
for any bounded r.v. $\shg$-measurable $\Psi$.

 A  stochastic process
will stand for an application 
$[0,T] \times \Omega \rightarrow B$, which is measurable 
with respect to the $\sigma$-fields $\shb([0,T]) \otimes \shf$
and $\shb(B)$.
%In that case, it is always strongly (Bochner) measurable,
%since it is the  the limit of $\shf$-measurable countably-valued functions. 
If $B$ is infinite dimensional, the processes are indicated by the bold letters
$\X, \Y, \Z$. 
Given a Banach space $B_0$, a process  $\X: ([0,T]\times \Omega, \shb([0,T]) \otimes \shf) 
\to B$ is said to
 be {\bf strongly (Bochner) measurable} if it is the limit of $\shf$-
measurable countably-valued functions. 
If $B_0$ is separable then any measurable  process is always strongly  measurable,
since it is the  the limit of $\shf$-measurable countably-valued functions. 
A reference about measurability for functions taking values in Banach spaces
is for instance \cite{vanNervUMD}, Proposition 2.1.
A process $[0,T] \times \Omega \rightarrow B$, which
is measurable with respect to the $\sigma$-fields $\shp$ and
$\shb(\R)$ is said to be {\bf predictable} with respect to the given 
filtration  $(\mathscr{F}_{t}, t \in [0,T])$.

 Let  $H, U$ be separable Hilbert spaces, $Q \in \mathcal{L}(U)$ be a positive,
 self-adjoint operator and define $U_0:=Q^{1/2} (U)$. This  is again a separable Hilbert space.
Even if not necessary we suppose $Q$ to be injective, which avoids formal complications. We 
 endow $U_0$ with the scalar product $\left\langle a,b \right\rangle_{U_0} := \left\langle Q^{-1/2}a,Q^{-1/2}b \right\rangle$. 
$Q^{1/2} \colon U \to U_0$ is an  isometry, see e.g. \cite{DaPratoZabczyk92} 
Section 4.3.
Assume that $\W^Q=\{\W^Q_t:0\leq t\leq T\}$ is an $U$-valued 
$(\mathscr{F}_t)$-$Q$-Wiener process (with $\W^Q_0=0$, $\mathbb{P}$ a.s.).
 The notion
of $Q$-Wiener process and
 $(\mathscr F_t)$-$Q$-Wiener process were defined for example in \cite{DaPratoZabczyk92} Chapter 4, see
also \cite{GawareckiMandrekar10} Chapter 2.1.
We recall that  $\mathcal{L}_2(U_0; H)$ stands for the Hilbert space of the Hilbert-Schmidt operators from $U_0$ to $H$.

An $U$-valued process $\M \colon [0,T] \times \Omega \to U$ 
 is called  $(\mathscr{F}_t)$-{\bf martingale} if, for all 
$t\in [0,T]$, $\M$ is $(\mathscr{F}_t)$-adapted
with $\mathbb{E} \left [ |\M_t|_U \right ] < +\infty$ and $\mathbb{E}\left [ \M_{s}  |\mathscr{F}_{t} \right ] = \M_{t}$ 
for all $0\leq t \leq s \leq T$.
In the sequel, the reference to the filtration  $(\mathscr{F}_{t}, t \in [0,T])$
will be often omitted.
 The mention ``adapted'', 
''predictable'' etc... we will always refer to {\it with respect to
 the filtration} $\left \{ \mathscr{F}_t \right \}_{t\geq 0}$.
An $U$-valued martingale $\M$ is said to be {\bf square integrable} if 
 $\mathbb{E} \left [ |\M_T|^2_U \right ] < +\infty$.
A $Q$-Wiener process is a square integrable martingale.
We denote by $\mathcal{M}^2(0,T; U)$ the linear space of square integrable  
 martingales indexed by $[0,T]$ with values
in $U$, i.e.   of measurable processes $\M: [0,T] \times \Omega 
 \rightarrow U $ such that $E(\vert \M_T \vert^2_U) < \infty$. 
In particular for $\M \in \mathcal{M}^2(0,T; U)$, the quantity
\[
|\M|_{\mathcal{M}^2(0,T; U)} := \left 
(\mathbb{E} \sup_{t\in [0,T]} |\M_t|_U^2 \right )^{1/2}
\]
is finite. Moreover, it defines a norm and $\mathcal{M}^2(0,T; U)$ 
endowed with it, is a Banach space as
 stated in \cite{DaPratoZabczyk92}% page 79, 
Proposition 3.9.
An $U$-valued process $\M \colon [0,T] \times \Omega \to U$  is 
called {\bf local martingale} if there exists a non-decreasing sequence 
of stopping times $\tau_n\colon \Omega \to [0,T]\cup\{+\infty\}$ 
such that $\M_{t\wedge \tau_n}$ for  $t\in[0,T]$ is a martingale and $\mathbb{P} \left [ \lim_{n\to\infty} \tau_n = +\infty \right ]=1$. 
All the considered martingales and local martingales 
will be supposed to be continuous. 

Given a continuous local martingale $\M \colon [0,T] \times \Omega \to U$,
 the process $|\M|^2$ is a real local sub-martingale,
 see Theorem 2.11 in \cite{KrylovRozovskii07}. 
The increasing predictable process, vanishing at zero, appearing in the Doob-Meyer decomposition of $|\M|^2$
will be denoted by $([\M]^{\mathbb{R}, cl}_t, t\in [0,T])$. It  is of course uniquely determined and continuous.

A $B$-valued process $\A$ is said to {\bf be a bounded variation process}
or {\bf to have bounded variation} if almost every trajectory has 
bounded variation i.e. if, 
for almost all $\omega$, 
the supremum of 
$\sum_{i=1}^N |\A_{t_{i-1}}(\omega) - \A_{t_i}(\omega)|_B$
over all the possible subdivisions $ 0 = t_0 < \ldots < t_N$, $N \in \N^\ast$,
is finite. If $B = U$ is a Hilbert space,
 following \cite{Metivier82}, Definition 23.7, we  say that an 
$U$-valued process $\X$ is a {\bf semimartingale} if $\X$ can be written 
as $\X = \M + \A$ where $\M$ is a local martingale and $\A$ a
 bounded variation process. The total variation function process
associated with $\A$ is defined similarly as for real valued processes
and it is denoted by $t \mapsto \Vert \A_t \Vert$.

\subsection{The Hilbert space valued It\^o stochastic integral}
\label{DBZI}

We recall here some basic facts about the Hilbert space valued It\^o integral, which was made popular for instance by G. Da Prato
 and J. Zabczyk, see \cite{DaPratoZabczyk92, DaPratoZabczyk96}.
 More recent monographs on the subject are \cite{GawareckiMandrekar10, PrevotRockner07}.

Let  $H$ and $U$ be two separable Hilbert spaces. We adopt  the notations that we have introduced in previous subsection \ref{SGPF}.
 $\mathcal{I}_\M(0,T;U,H)$ will be the set of the processes $\X\colon [0,T]\times \Omega \to \mathcal{L}(U;H)$ 
that are strongly measurable from $([0,T]\times \Omega, \mathscr{P})$ to $\mathcal{L}(U;H)$ and such that
\[
|\X|_{\mathcal{I}_\M(0,T;U,H)} := \left (\mathbb{E} \int_0^T \| \X_r \|_{\mathcal{L}(U;H)}^2 d [\M]^{\mathbb{R}, cl}_r \right )^{1/2} < +\infty.
\]
$\mathcal{I}_\M(0,T;U,H)$ endowed with the norm $|\cdot|_{\mathcal{I}_\M(0,T;U,H)}$ 
is a Banach space.
The linear map
\[
\left \{
\begin{array}{l}
I\colon \mathcal{I}_\M(0,T;U,H) \to \mathcal{M}^2(0,T; H)\\
\X \mapsto \int_0^T \X_r d \M_r,
\end{array}
\right .
\]
is a contraction, see e.g. \cite{Metivier82} Section 20.4 above Theorem 20.5. 
As illustrated in  \cite{KrylovRozovskii07} Section 2.2 (above Theorem 2.14),
the stochastic integral w.r.t. $\M$ extends to the integrands $\X$ which are 
strongly 
measurable from $([0,T]\times \Omega, \mathscr{P})$ to $\mathcal{L}(U;H)$ and such that
\begin{equation} \label{EChainRule}
\int_0^T \| \X_r \|_{\mathcal{L}(U;H)}^2 d [\M]^{\mathbb{R}, cl}_r < +\infty \qquad a.s.
\end{equation}
We denote by $\mathcal{J}^2(0,T; U,H)$ such a 
family of integrands w.r.t. $\M$.

We have the following standard fact, see e.g. \cite{KrylovRozovskii07} Theorem 2.14. 
\begin{prop} \label{PChainRule-1}
Let $\M$ be a continuous $U$-valued  $({\mathscr F}_t)$-local martingale,
$\X$ a process verifying (\ref{EChainRule}). Then $\N_t = \int_0^t \X_r d\M_r, t \in [0,T],$ is an  $(\mathscr{F}_t)$-local martingale with values in $H$.
\end{prop}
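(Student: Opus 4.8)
The plan is to reduce the statement to the already-established contraction property of the map $I$ on $\mathcal{I}_\M(0,T;U,H)$ by a standard localization procedure. First I would treat the \emph{elementary case} in which $\M$ is a square integrable martingale and the integrand satisfies the stronger integrability condition $\X \in \mathcal{I}_\M(0,T;U,H)$, i.e. $\mathbb{E}\int_0^T \|\X_r\|_{\mathcal{L}(U;H)}^2 d[\M]^{\mathbb{R},cl}_r < +\infty$. In that situation the definition of the stochastic integral as the image of $I$ gives directly $\N = I(\X) \in \mathcal{M}^2(0,T;H)$, so $\N$ is in particular an $H$-valued (square integrable, hence genuine) martingale and there is nothing more to prove.

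Then I would pass to the general case by introducing a localizing sequence. Let $(\sigma_n)$ be a sequence of stopping times reducing $\M$ as a local martingale, and set
\[
\tau_n := \sigma_n \wedge \inf\left\{ t \in [0,T] : |\M_t|_U \geq n \right\} \wedge \inf\left\{ t \in [0,T] : \int_0^t \|\X_r\|_{\mathcal{L}(U;H)}^2 d[\M]^{\mathbb{R},cl}_r \geq n \right\},
\]
with the usual convention $\inf \emptyset = T$. By the continuity of $\M$ and of $t \mapsto \int_0^t \|\X_r\|_{\mathcal{L}(U;H)}^2 d[\M]^{\mathbb{R},cl}_r$, together with assumption (\ref{EChainRule}), the sequence $(\tau_n)$ is non-decreasing and $\tau_n \uparrow T$ almost surely. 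The stopped process $\M^{\tau_n} := \M_{\cdot \wedge \tau_n}$ is then a bounded, hence square integrable, continuous martingale, and its associated increasing process satisfies $[\M^{\tau_n}]^{\mathbb{R},cl} = [\M]^{\mathbb{R},cl}_{\cdot \wedge \tau_n}$.

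For each fixed $n$ I would check that the truncated integrand $\X^{(n)}_r := \X_r \mathbf{1}_{\{r \leq \tau_n\}}$ belongs to $\mathcal{I}_{\M^{\tau_n}}(0,T;U,H)$: indeed, by the very definition of $\tau_n$,
\[
\mathbb{E}\int_0^T \|\X^{(n)}_r\|_{\mathcal{L}(U;H)}^2 d[\M^{\tau_n}]^{\mathbb{R},cl}_r = \mathbb{E}\int_0^{\tau_n} \|\X_r\|_{\mathcal{L}(U;H)}^2 d[\M]^{\mathbb{R},cl}_r \leq n < +\infty.
\]
Hence the elementary case applies and $\int_0^\cdot \X^{(n)}_r d\M^{\tau_n}_r \in \mathcal{M}^2(0,T;H)$ is a square integrable martingale. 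The crucial \emph{locality} property of the stochastic integral then yields $\int_0^{t} \X^{(n)}_r d\M^{\tau_n}_r = \int_0^{t\wedge\tau_n} \X_r d\M_r = \N_{t\wedge \tau_n}$, so that the stopped process $\N^{\tau_n}$ is a martingale for every $n$. Since $\tau_n \uparrow T$ a.s., this exhibits $(\tau_n)$ as a reducing sequence for $\N$ and shows that $\N$ is an $H$-valued $(\mathscr{F}_t)$-local martingale, as required.

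The step I expect to be the main obstacle is the \emph{consistency under stopping}: one must justify both that $[\M^{\tau_n}]^{\mathbb{R},cl}$ is the stopped version $[\M]^{\mathbb{R},cl}_{\cdot\wedge\tau_n}$ (a compatibility between stopping and the Doob--Meyer increasing process) and that the stochastic integral commutes with stopping in the sense used above. These facts are intuitively clear and classical in the real-valued theory, but in the present Hilbert space valued setting they require invoking the approximation of $\X$ by elementary integrands and the continuity of $I$ in order to transfer the identity from simple integrands to the general case.
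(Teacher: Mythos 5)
Your localization argument is correct and is the standard proof of this standard fact; the paper itself offers no proof, simply citing Theorem 2.14 of Krylov--Rozovskii, which proceeds in essentially the same way (reduce to the square integrable case $\X\in\mathcal{I}_{\M}(0,T;U,H)$ where the contraction property of $I$ applies, then stop and use locality of the integral). The only cosmetic point is that, to match the paper's definition of a local martingale, you should set $\tau_n=+\infty$ rather than $T$ on the event where none of the three thresholds is attained, so that $\tau_n\to+\infty$ a.s. as required.
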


Consider now the case when the integrator $\M$ is a $Q$-Wiener process,
with values in $U$, where
$Q$ be again a positive injective  and self-adjoint operator  
$Q \in \mathcal{L}(U)$,
see Section \ref{SGPF}. 
We  consider $U_0$ with its inner product as before.
 By \eqref{EHSQ} we can easily prove that, given 
$A \in \mathcal{L}_2(U_0; H)$, we have
 $\| A \|^2_{\mathcal{L}_2(U_0; H)}  = {\mathrm Tr} \left(A Q^{1/2}(A Q^{1/2})^*
 \right).$
 Let $\W^Q=\{\W^Q_t:0\leq t\leq T\}$ be an $U$-valued  
$({\mathscr F}_t)$-$Q$-Wiener process with $\W^Q_0=0$, $\mathbb{P}$ a.s. 
In this case  the It\^o-type  integral  with respect to $\W$  extends to a larger class,
see Chapter  4.2 and 4.3 of \cite{DaPratoZabczyk92}. 
If $\Y$ is a  predictable process with values in $ \mathcal{L}_2(U_0; H)$
with some integrability properties, 
then  the It\^o-type  integral of $\Y$ with respect to $\W$,
 i.e. $\int_0^t \Y_r d \W_r, t \in [0,T],$ is well-defined.

\begin{prop}
\label{PChainRule-2}
Let  $\M$ be a  process  of the  form
\begin{equation} \label{AA}
\M _t = \int_0^t  \Y_r d \W^Q_r, t \in [0,T],
\end{equation}
where $\Y$ is an $\mathcal{L}(U;H)\cap \mathcal{L}_2(U_0; H)$-valued predictable process
such that 
\begin{equation} \label{AAAbis}
\int_0^T {\mathrm Tr} [\Y_r Q^{1/2} (\Y_r Q^{1/2})^*]  d r  < 
\infty \ {\mathrm a.s.}
\end{equation}
Then $\M$ is a $H$-valued local martingale. Moreover we have the following.
\begin{itemize}
 \item[(i)] If $\X$ is an $H$-valued predictable process such that 
\begin{equation} \label{EChainRule11}
  \int_0^T \langle \X_r, \Y_r Q^{1/2} (\Y_r  Q^{1/2})^* \X_r \rangle_H d r 
< \infty, \,\, { a.s.},
\end{equation}
then, using Riesz identification, 
\begin{equation} \label{AA1}
 N_t = \int_0^t  \X^\ast_r  d \M_r, t \in [0,T],
\end{equation}
is a real local martingale. If the expectation of \eqref{EChainRule11} is finite, then  $N$ is a square integrable martingale.
\item[(ii)] If, for some separable Hilbert space $E$, $\K$ is a
 $\mathcal{L}(H,E)$-valued, $(\mathscr{F}_t)$-predictable process such that 
\begin{equation} \label{AAAter}
\int_0^T {\mathrm Tr} [\K_r\Y_r Q^{1/2} (\K_r\Y_r Q^{1/2})^*]  d r  < \infty
\ {\mathrm a.s.},
\end{equation}
 then
the $E$-valued It\^o-type stochastic integral $\int_0^t \K d \M, 
t \in  [0,T]$,
 is well-defined, it is a local martingale and it equals $\int_0^t \K \Y d \W^Q$.
\end{itemize}
\end{prop}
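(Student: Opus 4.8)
The plan is to treat the three assertions in a unified way, recognising that the integrability hypotheses \eqref{AAAbis}, \eqref{EChainRule11} and \eqref{AAAter} are nothing but the natural admissibility conditions for the integrals involved, once they are rewritten through the trace/Hilbert--Schmidt identities recalled just before the statement. First I would dispose of the claim that $\M$ is an $H$-valued local martingale: using $\| \Y_r \|^2_{\mathcal{L}_2(U_0;H)} = {\mathrm Tr}(\Y_r Q^{1/2}(\Y_r Q^{1/2})^*)$, a consequence of \eqref{EHSQ}, hypothesis \eqref{AAAbis} reads exactly $\int_0^T \|\Y_r\|^2_{\mathcal{L}_2(U_0;H)}\, dr < \infty$ a.s., the standard a.s.-admissibility condition for the It\^o integral against $\W^Q$; hence $\M$ is a well-defined continuous $H$-valued local martingale by the localization procedure of \cite{DaPratoZabczyk92}, Chapter 4. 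At the same time I would record that the operator-valued quadratic variation of $\M$ has density $Q^\M_r := \Y_r Q^{1/2}(\Y_r Q^{1/2})^*$, so that $d[\M]^{\mathbb{R},cl}_r = {\mathrm Tr}(Q^\M_r)\, dr$; this is what identifies the admissibility condition for operator integrands against $\M$ in the next step.

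The core of the proof is the chain rule in item (ii), i.e. the associativity $\int_0^t \K\, d\M = \int_0^t \K \Y\, d\W^Q$. First I would observe that \eqref{AAAter} is precisely $\int_0^T {\mathrm Tr}(\K_r Q^{\M}_r \K_r^*)\, dr < \infty$ a.s., which simultaneously guarantees that $\int \K\, d\M$ is well-defined as an integral against the $H$-valued local martingale $\M$ and that $\K \Y$ is an admissible $\W^Q$-integrand, since ${\mathrm Tr}(\K_r Q^\M_r \K_r^*) = \|\K_r \Y_r Q^{1/2}\|^2_{\mathcal{L}_2(U;E)} = \|\K_r \Y_r\|^2_{\mathcal{L}_2(U_0;E)}$. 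After a localization reducing to the case in which the expectation of \eqref{AAAter} is finite, I would establish the identity for elementary $\mathcal{L}(H;E)$-valued integrands, where it reduces to the elementary fact that a bounded operator commutes with the It\^o integral, $K\int_s^t \Y\, d\W^Q = \int_s^t K\Y\, d\W^Q$ for $K \in \mathcal{L}(H;E)$; summing over a partition yields the equality for simple $\K$. The general case then follows by approximating $\K$ by simple integrands and passing to the limit, the displayed isometric identity showing that $\K \mapsto \int \K\, d\M$ and $\K \mapsto \int \K\Y\, d\W^Q$ have the same $L^2$-modulus, so that a single approximating sequence makes both sides converge; both limits are $E$-valued local martingales by Proposition \ref{PChainRule-1}.

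Finally, item (i) is the special case $E = \R$, $\K_r = \X^*_r$ of item (ii), via the Riesz identification $\X^*_r \in \mathcal{L}(H;\R)$. Here \eqref{AAAter} collapses to \eqref{EChainRule11}: for the rank-one functional $A = \X^*_r \Y_r Q^{1/2}\colon U \to \R$ one has ${\mathrm Tr}(AA^*) = |A^*(1)|^2_U = |(\Y_r Q^{1/2})^*\X_r|^2_U = \langle \X_r, \Y_r Q^{1/2}(\Y_r Q^{1/2})^*\X_r\rangle_H$, so that \eqref{EChainRule11} is exactly the admissibility condition and $N_t = \int_0^t \X^*_r\, d\M_r = \int_0^t \X^*_r \Y_r\, d\W^Q_r$ is a real local martingale. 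When the expectation of \eqref{EChainRule11} is finite, the It\^o isometry gives $\E|N_T|^2 = \E\int_0^T \langle \X_r, \Y_r Q^{1/2}(\Y_r Q^{1/2})^*\X_r\rangle_H\, dr < \infty$, whence $N \in \mathcal{M}^2(0,T;\R)$ is square integrable.

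The main obstacle I anticipate is the approximation argument in item (ii): one must choose the simple integrands so that convergence holds simultaneously on both sides of the chain rule, which is exactly why establishing the isometric compatibility ${\mathrm Tr}(\K_r Q^\M_r\K_r^*) = \|\K_r\Y_r\|_{\mathcal{L}_2(U_0;E)}^2$ at the level of integrand norms is the decisive preliminary step; everything else is a routine matter of localization and continuity of the two stochastic integration maps.
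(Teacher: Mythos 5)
Your argument is correct and follows essentially the same route as the paper, which simply invokes Da Prato--Zabczyk (Section 4.7 and Theorem 4.12) for the case where the expectations of \eqref{AAAbis}, \eqref{EChainRule11}, \eqref{AAAter} are finite and then localizes by stopping; your trace/Hilbert--Schmidt identifications, simple-integrand approximation, and reduction of (i) to the scalar case of (ii) are precisely the content of that citation. No gap.
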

\begin{proof}
The results above are a consequence of \cite{DaPratoZabczyk92} Section 4.7.
at least when the expectations of \eqref{AAAbis},
\eqref{EChainRule11} and \eqref{AAAter}
are finite.
In particular the first part is stated in Theorem 4.12 of 
 \cite{DaPratoZabczyk92}.
Otherwise, on proceeds by localization, via stopping arguments.
\end{proof}

\begin{rem} \label{R24}
  In the sequel we will also denote the integral in \eqref{AA1}, by
$\int_0^t \langle \X_r, d \M_r \rangle_H, t \in [0,T],$
or by $\int_0^t {}_{H^*}\langle \X^\ast_r, d \M_r \rangle_H, t \in [0,T],$
using again Riesz identification.
\end{rem}

\section{Finite dimensional calculus via regularization}
\label{finreg}

\subsection{Integrals and covariations}

This theory has been developed in several papers, starting from \cite{rv, rv1}.
A survey on this subject is given in \cite{Rus05}.
The formulation is light, efficient when the integrator 
is a finite quadratic variation process, but 
it extends to many integrator processes whose paths have a $p$-variation
with $p > 2$.
  Integrands are allowed to be 
  anticipating and the integration theory and calculus appears to be 
close to a pure 
pathwise approach even though there is still a probability
space behind. The  theory  clearly allows  non-semimartingales
integrators.\\
Let now $X$ (resp. $Y$) be a real continuous (resp. a.s. integrable) process,
both indexed by $t \in [0,T]$. 
\begin{dfn}
Suppose that, for every $t\in[0,T]$, 
 the following limit 
\begin{equation} \label{DPFI}
\int_{0}^{t}Y_{r}d^{-}X_{r}:=\lim_{\epsilon\rightarrow 0}
\int_{0}^{t} Y_{r}\frac{X_{r+\epsilon}-X_{r}}{\epsilon}dr,
\end{equation}
exists in probability. 
If the obtained  random function
admits a continuous modification, that process is 
 denoted by  $\int_0^\cdot Yd^-X$ and called  
\textbf{(proper)  forward integral of $Y$ with respect to $X$}.
\end{dfn}
\begin{dfn} \label{D32}
If the limit \eqref{DPFI} exists in probability for every $t \in [0,T[$
and $\lim_{t\rightarrow T}\int_{0}^{t}Yd^{-}X$ exists in probability, 
the limiting 
random variable is called the \textbf{improper forward integral of $Y$ 
with respect to 
$X$} and it is still denoted by
$\int_{0}^{T} Y d^{-}X$.
\end{dfn}
When $ p > 2$, in general $\int_0^\cdot X d^- X$, does not exist, in particular
when $Y = X$. In that case, in stochastic calculus naturally appears the
{\it symmetric} (generalized Stratonovich) integral, see for instance \cite{Rus05}.

As we mentioned, the covariation is a crucial notion in stochastic calculus via
regularization.

\medskip
\begin{dfn}\label{D33}
The \textbf{covariation of $X$ and $Y$} is defined by 
\begin{equation*}							
\left[X,Y\right]_{t} = \left[Y,X\right]_{t}  = \lim_{\epsilon\rightarrow 0^{+}} 
\frac{1}{\epsilon} \int_{0}^{t} (X_{s+\epsilon}-X_{s})(Y_{s+\epsilon}-Y_{s})ds, t \in [0,T],
\end{equation*}
if the limit exists in probability 
% in the ucp sense with respect to
for every  $t \in [0,T]$, provided that 
the limiting random function  admits a continuous version.
If $X=Y,$ $X$ is said to be \textbf{finite
quadratic variation process} and we set $[X]:=[X,X]$.
A vector $(X^1, \ldots, X^n)$ of real processes is said to admit
{\bf all its mutual brackets} if $[X^i, X^j], 1 \le i, j \le n,$
exist. 
\end{dfn}
One natural question arises.
 What is  the link between
the regularization and discretization techniques of F\"ollmer (\cite{fo}) 
type?
Let $Y$ be a cadlag process. One alternative method could be to define
$\int_0^T Y dX $ as the limit 
of 
$$ \sum_{i=0}^{N-1} Y_{t_i} (X_{t_{i+1}} - X_{t_{i}}), N \in \N^\ast, $$
when the mesh $\max_{i=0}^{N-1}  (t_{i+1} - t_{i})$    of the subdivision
\begin{equation} \label{subdivision}
 0 = t_{0} < \ldots < t_N = T,
\end{equation}
converges to zero.
A large part of calculus via regularization can be essentially translated in that formal  language via discretization. 
However, even if it is not essential, we decided to 
keep  going on with regularization methods.
First, because
 that approach is direct and analytically efficient.
 Second, in many contexts, the class of integrands
 is larger. Let us just  fix one simple example: the Wiener integral with respect to
Brownian motion. Let $g \in L^2([0,T])$ and $W$ be a classical Wiener process; 
$\int_0^t g d^-W, t \in [0,T]$ exists and equals Wiener-It\^o integral
$\int_0^t g dW, t \in [0,T]$.
However the  discretizations limit of
 $ \sum_{i=0}^{n-1} g({t_i}) (W_{t_{i+1}} - W_{t_{i}}) $
 may either not exist, or depend on the sequences of subdivisions.
 Indeed, as an example, let us choose $g = 1_{\Q \cap [0,T]}$, where $\Q$ is the
set of rational numbers. 
If,  all the $t_i$ elements of   subdivision \eqref{subdivision}
  where irrational (except for the extremities), then  the limit would be zero,
as for the It\^o-Wiener integral, being $g = 0$ a.e.
If on the contrary, all of the $t_i$ are rational, then 
the limit is $W_T -W_0$. 

\medskip 

In the proposition below we list some  properties
relating It\^o calculus and forward calculus, see e.g.
\cite{Rus05}.

\begin{prop} \label{Pproperties}
 Suppose that $M$ is a  continuous $(\mathscr{F}_{t})$-local martingale and $Y$ is 
cadlag and predictable. Let   $V$ be a  bounded variation process.
Let $S^{1}$, $S^{2}$ be $(\mathscr{F}_{t})$-semimartingales with 
decomposition $S^{i}=M^{i}+V^{i}$, $i=1,2$,
 where $M^{i}, i = 1,2$ are  $(\mathscr{F}_{t})$-continuous local martingales and
 $V^{i}$ continuous adapted bounded variation processes. We have the following.
\begin{enumerate}
\item  $M$ is a finite quadratic variation process and
$[M]$ is the classical bracket $\langle M\rangle$. 
\item $\int_0^\cdot Yd^- M$ exists and it equals  
It\^o integral $\int_0^\cdot YdM$.
\item  Let us suppose $V$ to be continuous 
and $Y$ cadlag (or vice-versa); then
 $[V] =[Y,V]= 0$. Moreover $\int_0^\cdot Y d^-V=\int_0^\cdot Y dV $,  
is the {\bf Lebesgue-Stieltjes integral}.
\item $[S^{i}]$ is the classical bracket and $[S^{i}]=\langle M^{i}\rangle $.
\item $[S^{1},S^{2}]$ is the classical bracket and $[S^{1},S^{2}]=\langle
 M^{1},M^{2}\rangle $. 
\item If $S$ is a continuous semimartingale and $Y$ is cadlag and adapted, then
$
\int_0^\cdot Yd^- S=\int_0^\cdot YdS 
$
is again an It\^o integral.
\item If $W$ is a Brownian motion and $Y$ is progressively measurable process 
such that $\int_0^T Y^2_s ds < \infty$ a.s., then 
$\int_0^\cdot Yd^- W$ exists and equals the It\^o integral  $\int_0^\cdot YdW$.
\end{enumerate}
\end{prop}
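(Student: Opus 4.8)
The plan is to reduce every forward-integral and covariation expression to its classical Itô counterpart, organizing the seven items so that the later ones rest on the earlier ones. I would prove items 1, 2 and 3 as the genuine building blocks, then deduce items 4, 5 and 6 by bilinearity and linearity, and finally obtain item 7 by approximation.

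For item 1 I would first localize, reducing to a square integrable martingale $M$ with $\E(\langle M\rangle_T)<\infty$. Applying It\^o's formula to write
\[
(M_{s+\epsilon}-M_s)^2 = \big(\langle M\rangle_{s+\epsilon}-\langle M\rangle_s\big) + 2\int_s^{s+\epsilon}(M_u-M_s)\,dM_u,
\]
I integrate in $s$ over $[0,t]$ and divide by $\epsilon$. The Lebesgue average of the first term converges to $\langle M\rangle_t$, while the martingale-remainder term is shown to vanish in $L^2$ using the isometry and Doob's inequality; hence $\frac{1}{\epsilon}\int_0^t(M_{s+\epsilon}-M_s)^2\,ds \toP \langle M\rangle_t$ and $[M]=\langle M\rangle$.

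The heart of item 2 is the stochastic Fubini theorem. Writing $M_{r+\epsilon}-M_r=\int_{]r,r+\epsilon]}dM_u$ and interchanging the $dr$-integral with the stochastic integral gives
\[
\frac{1}{\epsilon}\int_0^t Y_r(M_{r+\epsilon}-M_r)\,dr = \int_0^{t+\epsilon}\left(\frac{1}{\epsilon}\int_{(u-\epsilon)^+}^{u\wedge t}Y_r\,dr\right)dM_u .
\]
As $\epsilon\to 0$ the inner average tends pointwise to the left limit $Y_{u^-}$, and, $M$ being continuous, the It\^o integral of this left-continuous modification coincides with $\int_0^t Y\,dM$; convergence in probability follows from the isometry and dominated convergence. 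Item 3 is elementary: using $|V_{s+\epsilon}-V_s|\le\int_{]s,s+\epsilon]}|dV_u|$, an ordinary Fubini argument, the absence of atoms of $dV$ (because $V$ is continuous) and the countability of the jumps of $Y$ kill both $[V]$ and $[Y,V]$, while the same manipulation identifies $\int_0^\cdot Y\,d^-V$ with the Lebesgue--Stieltjes integral.

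Items 4 and 5 then follow from the bilinearity of the covariation together with item 1 (polarized to give $[M^1,M^2]=\langle M^1,M^2\rangle$) and item 3, which annihilates every mixed bracket involving a bounded variation part; item 6 is the corresponding linear combination of items 2 and 3. The delicate point is item 7, where $Y$ is only progressively measurable with $\int_0^T Y_r^2\,dr<\infty$, so that item 2 does not apply directly. I would approximate $Y$ in the $L^2(dr)$-sense by c\`adl\`ag adapted $Y^n$, apply item 2 to each $Y^n$, and let $n\to\infty$. The main obstacle is a bound on $\frac{1}{\epsilon}\int_0^t(Y_r-Y^n_r)(W_{r+\epsilon}-W_r)\,dr$ that is uniform in $\epsilon$: once such a maximal-type estimate for the regularized Brownian integral is available, a standard double-limit argument in $n$ and $\epsilon$, controlled by $\int_0^T|Y_r-Y^n_r|^2\,dr\to 0$, completes the proof.
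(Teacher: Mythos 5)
The paper does not actually prove Proposition \ref{Pproperties}; it refers the reader to the survey \cite{Rus05}. Your architecture for items 1--6 reproduces the standard arguments of that literature: Itô expansion of the squared increment plus an isometry estimate for item 1, stochastic Fubini plus convergence of the backward averages $\frac{1}{\epsilon}\int_{(u-\epsilon)^{+}}^{u}Y_r\,dr$ to $Y_{u^-}$ for items 2 and 3, and bilinearity/polarization plus the vanishing of mixed brackets with bounded variation terms for items 4--6. These steps are sound, provided you say explicitly (as you do only for item 1) that items 2 and 3 are first localized so that $Y$, $\langle M\rangle$ and the total variation of $V$ are bounded; since a cadlag path on $[0,T]$ is bounded, this costs nothing. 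Note also that the Fubini argument of item 2 produces $Y_{u^-}$ as integrand without ever using predictability, so it covers the adapted cadlag $Y$ of item 6 directly.

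The one genuine gap is item 7, where the key uniform-in-$\epsilon$ estimate is postulated rather than proved. It can be closed, and the detour through cadlag approximants $Y^n$ is in fact unnecessary. The same stochastic Fubini identity as in item 2 gives
\begin{equation*}
\frac{1}{\epsilon}\int_0^t Y_r\left(W_{r+\epsilon}-W_r\right)dr
=\int_0^{t+\epsilon}\left(A_\epsilon Y\right)(u)\,dW_u,
\qquad
\left(A_\epsilon Y\right)(u):=\frac{1}{\epsilon}\int_{(u-\epsilon)^{+}}^{u\wedge t}Y_r\,dr,
\end{equation*}
and $A_\epsilon Y$ is continuous adapted, hence an admissible Itô integrand. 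By Jensen, $\int_0^{T}(A_\epsilon Y)^2(u)\,du\le\int_0^T Y_r^2\,dr$, and since $A_\epsilon$ is convolution with the approximate identity $\epsilon^{-1}\mathds{1}_{[0,\epsilon]}$, the $L^2$-continuity of translations gives $\int_0^{T}\left(A_\epsilon Y(u)-Y_u\right)^2du\rightarrow 0$ a.s., hence in probability. The elementary tail estimate
$\mathbb{P}\left(\sup_{t\le T}\left|\int_0^t H_u\,dW_u\right|\ge\lambda\right)\le\mathbb{P}\left(\int_0^T H_u^2\,du\ge\delta\right)+\delta\lambda^{-2}$,
applied with $H=A_\epsilon Y-Y$, then yields the ucp convergence of $\int_0^\cdot A_\epsilon Y\,dW$ to $\int_0^\cdot Y\,dW$, which is item 7; applied with $H=A_\epsilon(Y-Y^n)$, it is precisely the maximal-type bound your double-limit scheme was missing. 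Either way, you should state and prove this estimate rather than assume it.
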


Coming  back to the general calculus we state 
the {\bf integration by parts} formula, see e.g. item 4) of Proposition 1 in \cite{Rus05}.
\begin{prop} \label{IBP}
Let $X$ and $Y$ be continuous processes. Then 
\begin{equation*}		
Y_{t}X_{t}=Y_{0}X_{0}+\int_{0}^{t}Yd^{-}X+\int_{0}^{t}Xd^{-}Y+[X,Y]_{t},
\end{equation*}
provided that two of the three previous integrals or covariation
exist. If $X$ is a continuous bounded variation process, then
$ \int_{0}^{t}Xd^{-}Y=Y_{t}X_{t}-Y_{0}X_{0}-\int_{0}^{t} Y dX$.
\end{prop}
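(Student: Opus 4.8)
The plan is to work with the $\epsilon$-approximations of Definition \ref{D33} and of the forward integral \eqref{DPFI}, and to exhibit an exact algebraic cancellation. For $\epsilon > 0$ and $t \in [0,T]$ set
\[
I^X_\epsilon(t) := \frac{1}{\epsilon}\int_0^t Y_r (X_{r+\epsilon}-X_r)\,dr, \qquad
I^Y_\epsilon(t) := \frac{1}{\epsilon}\int_0^t X_r (Y_{r+\epsilon}-Y_r)\,dr,
\]
\[
C_\epsilon(t) := \frac{1}{\epsilon}\int_0^t (X_{r+\epsilon}-X_r)(Y_{r+\epsilon}-Y_r)\,dr,
\]
so that, by definition, $I^X_\epsilon(t)\to\int_0^t Y d^{-}X$, $I^Y_\epsilon(t)\to\int_0^t X d^{-}Y$ and $C_\epsilon(t)\to[X,Y]_t$ in probability, whenever the respective limits exist.

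First I would add the three integrands pointwise in $r$. Expanding the three products and collecting, all the mixed terms cancel and the integrand collapses exactly to $X_{r+\epsilon}Y_{r+\epsilon} - X_r Y_r$. Hence, writing $Z := XY$ (a continuous process),
\[
I^X_\epsilon(t) + I^Y_\epsilon(t) + C_\epsilon(t) = \frac{1}{\epsilon}\int_0^t (Z_{r+\epsilon}-Z_r)\,dr.
\]

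The next step is to compute the limit of the right-hand side. The change of variable $u = r+\epsilon$ in the first half rewrites it as $\frac{1}{\epsilon}\int_t^{t+\epsilon} Z_u\,du - \frac{1}{\epsilon}\int_0^\epsilon Z_u\,du$; since $Z$ is continuous, each average converges, as $\epsilon\to 0$, to the value of $Z$ at the relevant endpoint, so the whole expression tends pathwise (hence in probability) to $Z_t - Z_0 = Y_t X_t - Y_0 X_0$. This is the crux of the matter: the sum $I^X_\epsilon(t)+I^Y_\epsilon(t)+C_\epsilon(t)$ converges unconditionally, and therefore the hypothesis that two of the three limits exist forces the third to exist as well; passing to the limit then gives
\[
Y_t X_t = Y_0 X_0 + \int_0^t Y d^{-}X + \int_0^t X d^{-}Y + [X,Y]_t.
\]

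For the bounded variation statement I would invoke item 3 of Proposition \ref{Pproperties}: if $X$ is continuous of bounded variation, then $[X,Y]=0$ and $\int_0^t Y d^{-}X$ coincides with the Lebesgue--Stieltjes integral $\int_0^t Y\,dX$. Two of the three objects thus exist, so the formula just proved applies and yields $\int_0^t X d^{-}Y = Y_t X_t - Y_0 X_0 - \int_0^t Y\,dX$. There is no real obstacle in this argument; the only point deserving care is the elementary limit $\frac{1}{\epsilon}\int_0^t (Z_{r+\epsilon}-Z_r)\,dr \to Z_t - Z_0$, which rests solely on the continuity of $Z = XY$, together with the convention that continuous processes are extended beyond $T$ so that $X_{r+\epsilon}$ is well defined near the right endpoint.
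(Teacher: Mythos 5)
Your proof is correct and is exactly the classical argument that the paper delegates to the literature (it cites item 4) of Proposition 1 in \cite{Rus05} instead of giving a proof): the pointwise algebraic identity among the three $\epsilon$-approximations plus the elementary limit $\frac{1}{\epsilon}\int_0^t (Z_{r+\epsilon}-Z_r)\,dr \to Z_t - Z_0$ for continuous $Z=XY$. The only point worth making explicit is that the requirement of a continuous version for the third object is automatic, since it is then a.s. equal to the difference of $Z_t-Z_0$ and two processes already assumed to have continuous versions.
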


The kernel of calculus via regularization is
It\^o formula. It is a  well-known result in the semimartingales theory,
but it also  extends to the framework of  finite quadratic variation processes.
Here we only remind the one-dimensional case, in the form of a It\^o 
chain rule. It is essentially a consequence of
Proposition 4.3 of \cite{rv4}.

\begin{thm}		\label{ITOFQV}
Let $F:[0,T]\times \R\longrightarrow \R$ such that
 $F\in C^{1,2}\left( [0,T[\times \R \right)$ and $X$ be 
a finite quadratic variation process. 
We set $Y_t = F(t, X_t), t \in [0,T]$.
Let $Z = (Z_t, t \in [0,T])$ be an a.s. bounded    process.
We have
\begin{equation} \label{ItoChain}
\int_{0}^{t}  Z_r d^- Y_r      = 
 \int_{0}^{t}
Z_r \partial_{r} F(r,X_{r})dr + \int_{0}^{t} Z_r \partial_{x} F(r,X_{r})d^-X_r +
\frac{1}{2}\int_{0}^{t} Z_r \partial^2_{x\, x} F(r,X_{r})d[X]_{r},
\end{equation}
in the following sense: if the first (resp. the third) integral 
exists then the third (resp. the first) exists and
formula \eqref{ItoChain} holds. 
\end{thm}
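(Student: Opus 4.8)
The plan is to insert the weight $Z$ directly into the $\epsilon$-regularisations and to reduce the statement to the It\^o formula for finite quadratic variation processes, Proposition 4.3 of \cite{rv4}. I fix $t<T$ first (the value $t=T$, where $F$ is no longer of class $C^{1,2}$, is treated at the end). Starting from \eqref{DPFI}, I write
\[
\frac{1}{\epsilon}\int_0^t Z_r\left(Y_{r+\epsilon}-Y_r\right)dr
= \frac{1}{\epsilon}\int_0^t Z_r\left(F(r+\epsilon,X_{r+\epsilon})-F(r,X_r)\right)dr,
\]
and split the increment of $F$ into a time part and a space part,
\[
F(r+\epsilon,X_{r+\epsilon})-F(r,X_r)
= \int_r^{r+\epsilon}\partial_s F(s,X_{r+\epsilon})\,ds
+\left(F(r,X_{r+\epsilon})-F(r,X_r)\right).
\]
A second order Taylor expansion with integral remainder of the last bracket,
\[
F(r,X_{r+\epsilon})-F(r,X_r)
=\partial_x F(r,X_r)\,(X_{r+\epsilon}-X_r)
+(X_{r+\epsilon}-X_r)^2\int_0^1(1-\lambda)\,\partial^2_{xx}F(r,X_r+\lambda(X_{r+\epsilon}-X_r))\,d\lambda,
\]
decomposes the regularised integral as $J^1_\epsilon+J^2_\epsilon+J^3_\epsilon$, one term per integral on the right hand side of \eqref{ItoChain}.

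I then identify the three limits separately. For $J^1_\epsilon$, the continuity of the path of $X$ makes $\{(r,X_r):r\in[0,t]\}$ a compact subset of $[0,T[\times\R$, so $\partial_s F$ is bounded there and $\frac{1}{\epsilon}\int_r^{r+\epsilon}\partial_s F(s,X_{r+\epsilon})\,ds\to\partial_r F(r,X_r)$; together with the a.s. boundedness of $Z$, dominated convergence yields $J^1_\epsilon\to\int_0^t Z_r\partial_r F(r,X_r)\,dr$. The term $J^2_\epsilon=\int_0^t Z_r\partial_x F(r,X_r)\frac{X_{r+\epsilon}-X_r}{\epsilon}\,dr$ is, by \eqref{DPFI}, exactly the regularisation of the forward integral of the integrand $Z_r\partial_x F(r,X_r)$ against $X$, and hence converges to $\int_0^t Z_r\partial_x F(r,X_r)\,d^-X_r$ precisely when that forward integral exists.

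The main obstacle is $J^3_\epsilon=\frac{1}{\epsilon}\int_0^t Z_r\,\rho_\epsilon(r)\,(X_{r+\epsilon}-X_r)^2\,dr$, where $\rho_\epsilon(r)$ denotes the $\lambda$-integral above. Heuristically $\rho_\epsilon(r)\to\frac12\partial^2_{xx}F(r,X_r)$ uniformly on $[0,t]$ (by uniform continuity of $\partial^2_{xx}F$ on compacts and of the path), while $\frac{1}{\epsilon}(X_{r+\epsilon}-X_r)^2\,dr$ acts as $d[X]_r$, so that the limit should be $\frac12\int_0^t Z_r\partial^2_{xx}F(r,X_r)\,d[X]_r$. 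Making this rigorous is the heart of the matter: one must upgrade the defining convergence $\frac{1}{\epsilon}\int_0^t(X_{s+\epsilon}-X_s)^2\,ds\to[X]_t$ to $\frac{1}{\epsilon}\int_0^t g_r(X_{r+\epsilon}-X_r)^2\,dr\to\int_0^t g_r\,d[X]_r$ for a \emph{merely bounded measurable} weight $g$, and control the additional $\epsilon$-dependence of the Taylor point inside $\rho_\epsilon$. The bounded measurable (not continuous) nature of $Z$ is what prevents a one-line argument here; this is the technical content imported from Proposition 4.3 of \cite{rv4}, which I would combine with the uniform estimate on $\rho_\epsilon$ and a standard approximation of $g$ by continuous weights.

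It remains to organise the existence bookkeeping. By the unweighted It\^o formula of \cite{rv4}, on $[0,T[$ one has $Y_r=F(0,X_0)+V_r+\int_0^r\partial_x F(u,X_u)\,d^-X_u$ with $V_r=\int_0^r\partial_u F(u,X_u)\,du+\frac12\int_0^r\partial^2_{xx}F(u,X_u)\,d[X]_u$ of bounded variation; hence $\int_0^t Z_r\,d^-Y_r$ and $\int_0^t Z_r\partial_x F(r,X_r)\,d^-X_r$ differ by $\int_0^t Z_r\,d^-V_r$, which by Propositions \ref{IBP} and \ref{Pproperties} reduces to the Lebesgue--Stieltjes integral, i.e. to the sum of the first and third integrals of \eqref{ItoChain}. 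Since for $t<T$ these two integrals are finite (bounded integrands against $dr$ and against the finite measure $d[X]$), the exact identity $J^1_\epsilon+J^2_\epsilon+J^3_\epsilon=\frac1\epsilon\int_0^t Z_r(Y_{r+\epsilon}-Y_r)\,dr$ shows that $\int_0^t Z_r\,d^-Y_r$ exists if and only if $\int_0^t Z_r\partial_x F(r,X_r)\,d^-X_r$ does, and then \eqref{ItoChain} holds. The genuine content of the biconditional is therefore located at $t=T$, where $F$ ceases to be $C^{1,2}$ and all three ordinary integrals become improper; there I would conclude by passing to the limit $t\uparrow T$ in the identity just obtained and invoking Definition \ref{D32}. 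I expect this improper analysis at $T$, together with the bounded-weight convergence in $J^3_\epsilon$, to be the two delicate points.
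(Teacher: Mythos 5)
The paper does not actually prove Theorem \ref{ITOFQV}: it only remarks that the statement ``is essentially a consequence of Proposition 4.3 of \cite{rv4}''. Your sketch reconstructs exactly the argument that lies behind that citation --- Taylor expansion of $F(r+\epsilon,X_{r+\epsilon})-F(r,X_r)$ into a time term, a first-order space term and a second-order remainder, identification of $J^1_\epsilon$ and $J^3_\epsilon$ with the two Lebesgue/Stieltjes integrals, and the observation that the exact identity $J^1_\epsilon+J^2_\epsilon+J^3_\epsilon=\epsilon^{-1}\int_0^t Z_r(Y_{r+\epsilon}-Y_r)\,dr$ then transfers existence between the two forward integrals. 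You also correctly locate the two delicate points (the weighted convergence in $J^3_\epsilon$ and the improper analysis at $t=T$). In that sense your proposal is the proof the paper is implicitly invoking, only written out.

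The one step I would push back on is your plan to handle $J^3_\epsilon$ for a \emph{merely bounded measurable} weight by ``a standard approximation of $g$ by continuous weights''. The available lemma is that $\epsilon^{-1}\int_0^t g_r(X_{r+\epsilon}-X_r)^2\,dr\to\int_0^t g_r\,d[X]_r$ when $g$ is cadlag (or has at most countably many discontinuities): one uses that the occupation measures $\epsilon^{-1}(X_{\cdot+\epsilon}-X_\cdot)^2\,dr$ converge weakly to the atomless measure $d[X]$, so bounded weights whose discontinuity set is $d[X]$-null pass to the limit. For an arbitrary bounded measurable $g$ this can fail --- the densities $\epsilon^{-1}(X_{r+\epsilon}-X_r)^2$ may concentrate on small sets, and neither uniform nor $L^1(d[X])$ approximation by continuous weights gives a uniform-in-$\epsilon$ control of the error term. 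This is not fatal to the theorem as it is used in the paper (in Lemma \ref{LTechnical} the weight is of bounded variation in $x$, hence has countably many discontinuities, and the same regularity is implicitly present in Proposition 4.3 of \cite{rv4} and in the survey \cite{Rus05}), but your write-up should either assume this path regularity on $Z$ or make explicit that the weighted convergence is being taken as a black box from \cite{rv4} rather than derived by approximation. The rest of the bookkeeping, including the passage $t\uparrow T$ via Definition \ref{D32} for the improper case, is sound.
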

Taking $Z = 1$, comes out the natural  It\^o formula below.
\begin{prop} \label{PITO} With the same assumptions of Theorem \ref{ITOFQV}
we have  
$$  \int_{0}^{t} \partial_{x} F(r,X_{r})d^-X_r = 
 F(t,X_{t}) - F(0,X_{0}) - 
 \int_{0}^{t}\partial_{r}  F(r,X_{r})dr 
- \frac{1}{2}\int_{0}^{t} \partial^2_{x\, x} F(r,X_{r})d[X]_{r}.$$
\end{prop}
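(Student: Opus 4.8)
The plan is to read Proposition \ref{PITO} off from the It\^o chain rule of Theorem \ref{ITOFQV} by specializing the bounded weight to the constant $Z \equiv 1$. This $Z$ is trivially a.s. bounded, so all the hypotheses of Theorem \ref{ITOFQV} are in force, and \eqref{ItoChain} becomes
\begin{equation*}
\int_{0}^{t} d^- Y_r = \int_{0}^{t} \partial_{r} F(r,X_{r})\,dr + \int_{0}^{t} \partial_{x} F(r,X_{r})\,d^-X_r + \frac{1}{2}\int_{0}^{t} \partial^2_{x\, x} F(r,X_{r})\,d[X]_{r},
\end{equation*}
with $Y_r = F(r,X_r)$. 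Everything then reduces to two things: identifying the left-hand forward integral, and checking that the conditional clause of Theorem \ref{ITOFQV} can be triggered.

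First I would evaluate $\int_0^t d^- Y_r$ directly from the definition \eqref{DPFI}. Writing the defining limit and performing the change of variables $s = r + \epsilon$ in one of the two resulting integrals, one obtains
\begin{equation*}
\frac{1}{\epsilon}\int_0^t (Y_{r+\epsilon} - Y_r)\,dr = \frac{1}{\epsilon}\int_t^{t+\epsilon} Y_s\,ds - \frac{1}{\epsilon}\int_0^\epsilon Y_s\,ds.
\end{equation*}
Since $F$ is continuous and $X$ has continuous trajectories, $r \mapsto Y_r$ is a.s. continuous, so each average on the right converges a.s. (hence in probability) to the value of $Y$ at the relevant endpoint as $\epsilon \to 0$. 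Therefore the forward integral $\int_0^t d^- Y_r$ exists and equals $Y_t - Y_0 = F(t,X_t) - F(0,X_0)$. Alternatively this identity follows at once from the integration-by-parts formula of Proposition \ref{IBP} applied with the constant bounded variation process equal to $1$, for which both the Lebesgue-Stieltjes term and the covariation vanish.

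With this in hand, the ``first'' forward integral in \eqref{ItoChain} exists, so by the conditional statement of Theorem \ref{ITOFQV} the ``third'' one, $\int_0^t \partial_x F(r,X_r)\,d^-X_r$, exists as well and the chain-rule identity holds. Substituting $\int_0^t d^- Y_r = F(t,X_t) - F(0,X_0)$ and transposing the drift integral $\int_0^t \partial_r F(r,X_r)\,dr$ and the bracket integral $\frac{1}{2} \int_0^t \partial^2_{x\, x} F(r,X_r)\,d[X]_r$ to the right-hand side yields exactly the claimed formula. There is no genuinely hard step here, the statement being a direct specialization of the chain rule; the only point requiring a word of care is the elementary computation that the forward integral of the constant $1$ against a continuous process returns the increment of that process, which is what links $\int_0^t d^- Y_r$ to $F(t,X_t) - F(0,X_0)$.
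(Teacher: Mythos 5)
Your proposal is correct and follows exactly the paper's route: the paper obtains Proposition \ref{PITO} simply by taking $Z\equiv 1$ in Theorem \ref{ITOFQV}. The only difference is that you spell out the elementary verification that $\int_0^t d^- Y_r = Y_t - Y_0$ (which the paper leaves implicit), and that computation is carried out correctly.
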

 Theorem \ref{TITOF} will extend the formula above to the case of
Banach space valued integrators.

\medskip
An  adaptation of Proposition 11 of \cite{Rus05} and Proposition 2.2
of \cite{rg1}   gives the following.
Given a real interval $I$ and $h: I \rightarrow \R$ be a bounded variation
function, we denote by $\Vert h \Vert_{\mathrm var}$ the total variation of 
$h$. 
\begin{prop} \label{BVM} 
Let $I$ be a real interval and $f, g:[0,T] \times I  \rightarrow \R$
of class $C^{0,1}([0,T] \times I)$. 
Moreover for $ h =f$ or $h = g$ we suppose the following.
\begin{itemize}
\item For every $x \in I$, $h(\cdot, x)$ has bounded variation.
\item For any compact subset $K$ of $I$, there is a non-negative measure $\nu_K$
on ${\mathcal B}([0,T])$ such that for every Borel subset $B$ of $[0,T]$
we have
$$  \sup_{x \in K} \left \vert \partial h(B, x) \right \vert \le \nu_K(B).$$
\end{itemize}
 Let
 $X$ and $Y$ be two real processes such
that $(X,Y)$ admits all its mutual brackets.
Then $[f(\cdot, X), g(\cdot, Y)]_t = \int_0^t \partial_x f(s, X_s) 
\partial_x g(s, Y_s) d[X,Y]_s.$
\end{prop}
\begin{rem} \label{RExample}
\begin{itemize}
\item A typical example of a function $f$ or $g$ given in Proposition \ref{BVM} is
$h(r,x) = \sum_{i=1}^n \ell_i(r) g_i(x)$, where $g_i$ are of 
class $C^1(I)$ 
and all the $\ell_i: [0,T] \rightarrow \R$ have bounded variation.
\item Another possibility is $h$ of class $C^1([0,T] \times I)$. 
\end{itemize}
\end{rem}

Below we introduce the notion of weak Dirichlet process
 which was introduced in \cite{er2} and \cite{rg2}.
\begin{dfn}
\label{def:Dirweak}
A real continuous process $X \colon [0,T]\times \Omega \to \mathbb{R}$ is called {\bf weak Dirichlet process} 
if it can be written as
\begin{equation}
\label{eq:dec2}
X=M+A,
\end{equation}
where
\begin{itemize}
\item[(i)]  $M$ is a continuous local martingale,
\item[(ii)]  $A$ is a process such that $\left[ A,N\right] =0$ for every 
continuous local martingale $N$ and $A_0=0$.
\end{itemize}
\end{dfn}

\begin{prop}
\label{rm:ex418}
\begin{enumerate}
\item
The decomposition described in Definition \ref{def:Dirweak} is unique.
\item A real continuous semimartingale $S$ is a weak Dirichlet process.
\end{enumerate}
\end{prop}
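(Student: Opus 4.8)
The plan is to treat the two assertions separately, relying on the bilinearity of the covariation and on the properties of continuous local martingales collected in Proposition \ref{Pproperties}.

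For the uniqueness in item 1, I would argue by subtraction. Suppose $X = M_1 + A_1 = M_2 + A_2$ are two decompositions as in Definition \ref{def:Dirweak}. Setting $N := M_1 - M_2$, this is a continuous local martingale, and it coincides with $A_2 - A_1$. By bilinearity of the covariation, $A := A_2 - A_1$ still satisfies $[A, L] = 0$ for every continuous local martingale $L$, and $A_0 = 0$. Since $N = A$ is itself a continuous local martingale, I may take $L = N$ in this property to obtain $[N] = [N, N] = 0$. By item 1 of Proposition \ref{Pproperties}, $[N]$ coincides with the classical bracket $\langle N \rangle$, so $\langle N \rangle \equiv 0$. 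The standard fact that a continuous local martingale with vanishing quadratic variation and vanishing initial value (here $N_0 = A_0 = 0$) is indistinguishable from zero then yields $N \equiv 0$, hence $M_1 = M_2$ and $A_1 = A_2$.

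For item 2, I would start from the continuous semimartingale decomposition $S = M + V$, where $M$ is a continuous local martingale and $V$ is a continuous bounded variation process. To meet the normalization $A_0 = 0$, I would set $A := V - V_0$ and $\tilde M := M + V_0$; since $V_0$ is $\mathscr{F}_0$-measurable, $\tilde M$ remains a continuous local martingale, while $A$ is continuous, of bounded variation, and vanishes at time zero. The increments of $A$ agree with those of $V$, so for any continuous local martingale $N$ one has $[A, N] = [V, N]$. Applying item 3 of Proposition \ref{Pproperties} with $V$ continuous of bounded variation and $N$ continuous (hence cadlag), I obtain $[V, N] = 0$. Thus $S = \tilde M + A$ is a valid weak Dirichlet decomposition.

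The only delicate point is the martingale-theoretic input in item 1, namely that a continuous local martingale with zero quadratic variation is constant; this is the engine behind uniqueness and cannot be bypassed by the calculus-via-regularization formalism alone, although once the identification $[N] = \langle N \rangle$ is invoked it reduces to a classical result. Item 2, by contrast, is essentially a direct reading of Proposition \ref{Pproperties} and requires no further input beyond the bookkeeping needed to normalize $A_0 = 0$.
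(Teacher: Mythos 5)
Your proposal is correct and follows essentially the same route as the paper: for item 1 the paper simply cites Remark 3.5 of \cite{rg2}, and your subtraction argument (the difference of the martingale parts is a continuous local martingale that is also martingale-orthogonal, hence has zero bracket and, starting from zero, vanishes) is exactly the standard proof behind that citation; for item 2 the paper likewise invokes item 3 of Proposition \ref{Pproperties} to get $[V,N]=0$. Your extra bookkeeping normalizing $A_0=0$ is a harmless refinement the paper leaves implicit.
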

\begin{proof}
1. is stated in Remark 3.5 of \cite{rg2}.
2. is obvious since 
 a bounded variation process $V$ is a zero quadratic variation process
by item 3. of Proposition \ref{Pproperties}.
\end{proof}

\subsection{The deterministic  calculus via regularization.}

\label{sdetint}

An useful particular case arises  when $\Omega$ is a singleton, i.e.
when the calculus becomes deterministic. 
\\
 We will essentially concentrate in 
the definite integral  on an interval 
 $J = ]a,b]$,
where  $ a < b$ are two real numbers.
 Typically, in our applications we will consider
$a = -\tau$ or $a = - t$ and $b= 0$. 
 That integral will be a real number, instead of functions.

We start with a convention. If $f: [a,b] \rightarrow \R$ is a cadlag function,
we extend it naturally to another cadlag function $f_J$ on real line setting 
\begin{equation*} 
f_J (x) = \left \{
\begin{array}{ccc} 
f(b) &:& x > b  \\
 f(x)  &:& x  \in [a,b]   \\
 0  &:& x  < a.   
\end{array}
\right.
\end{equation*}
If $g$ is finite Borel measure on $[0,T]$,
we define the {\bf deterministic forward integral}
   $\int_{]a,b]} g(dx) d^-f(x)$ (or simply $\int_{]a,b]} g d^-f$)
  as the limit of 
$ \int_{]a,b]}  \frac{g(ds)}{\varepsilon}
 (f_J(s+\varepsilon) - f_J(s)) $, 
when $\varepsilon \downarrow 0$, provided it exists.
In most of the cases $g$ will be absolutely continuous
whose density will be still denoted by the same letter.
A similar definition can be provided for the (deterministic) covariation 
of $[f,g]$ of two (continuous) functions
$f$ and $g$ defined on some  interval $I$.
 Without restriction of generality, we suppose that $0 \in I$.
We set $[g,f](x), \ x \in I$,
the pointwise limit (if it exists), when $\varepsilon \rightarrow 0$ of 
$$ \int_0^x (g(r+\varepsilon) - g(r))  (f(r+\varepsilon) - f(r)) \frac{dr}{\varepsilon}, x \in  I.$$
If $g = f$, we also denote it by $[f]$.

\begin{rem}\label{BVI} The   
 following statements follow directly from the definition and
are left to the 
reader. The reader may consult \cite{rv93} for similar considerations.
By default, the bounded variation functions will be considered as cadlag. 
\begin{enumerate}
\item If $f$ has bounded variation
then $\int_{]a,b]}  g(s) d^-f(s)$ 
 is the classical Lebesgue-Stieltjes integral  $\int_{]a,b]} g df $. 
In particular, if $g = 1$,  $\int_a^b g(s) d^-f(s) = f(b) - f(a). $
\item If $g$ has bounded variation, the following integration by parts
 formula holds:
 $\int_{]a,b]}  g(s) d^-f(s)$ 
equals 
 $g(b-) f(b)  - \int_{]a,b]} f(s) dg(s)$.
\item A deterministic version of Theorem \ref{ITOFQV} can be easily stated,
with respect to integrals of the type  $\int_{]a,b]}$
instead of $\int_0^t$.
\end{enumerate}
\end{rem}
Besides $B= C([-T,0])$,
we introduce another Banach space.
Given a continuous function $g:[-T,0] \rightarrow \R$ we define the
 $2$-{\bf regularization variation} 
by $\vert g \vert_{2,var} := \sup_{0 < \varepsilon < 1} \int_{-T}^0 \left 
(g(s+\varepsilon) - g(s) \right )^2 \frac{ds}{\varepsilon}$.
We define by $V_2$ the space of $g \in B$ such that $\vert g \vert_{2,var}$ is
 finite.
If $\eta \in C([-T,0])$, we denote $\vert \eta \vert_\infty := \sup_{x \in [-T,0]}
 \vert \eta (x)\vert$.
\begin{prop} \label{V2Alter}
The functional $g \mapsto  \vert g \vert_\infty   + \vert g \vert_{2,var}$
 is a norm on $V_2$. 
Moreover $V_2$, equipped with that norm, is a Banach space.
\end{prop}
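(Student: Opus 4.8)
The plan is to verify the norm axioms first and then prove completeness, the latter being the substantial part. For the norm axioms, positivity is clear and the separation property comes for free: since $|g|_\infty$ is the sup-norm on $C([-T,0])$, already $|g|_\infty=0$ forces $g\equiv 0$, so $|g|_\infty+|g|_{2,var}=0$ implies $g=0$. For homogeneity and the triangle inequality I would isolate, for each fixed $\varepsilon\in(0,1)$, the increment functional $p_\varepsilon(g):=\bigl(\int_{-T}^0 (g(s+\varepsilon)-g(s))^2\,\frac{ds}{\varepsilon}\bigr)^{1/2}$. This is exactly the norm of $s\mapsto g(s+\varepsilon)-g(s)$ in $L^2\bigl([-T,0];\frac{ds}{\varepsilon}\bigr)$, hence a seminorm by Minkowski's inequality. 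Since $|g|_{2,var}=\sup_{0<\varepsilon<1}p_\varepsilon(g)^2$, the degree-one homogeneous object is $q(g):=\sup_\varepsilon p_\varepsilon(g)=\sqrt{|g|_{2,var}}$, which, being a supremum of seminorms, is again a seminorm and is finite exactly on $V_2$. Thus $g\mapsto|g|_\infty+q(g)$ is a genuine norm; I note that the homogeneous term entering the norm is the square root of the displayed $|g|_{2,var}$, which by itself scales quadratically.

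For completeness, let $(g_n)$ be Cauchy for the $V_2$-norm. First I would use the control $|g_n-g_m|_\infty\le \|g_n-g_m\|_{V_2}$ to deduce that $(g_n)$ is Cauchy in $(C([-T,0]),|\cdot|_\infty)$, which is complete; hence $g_n\to g$ uniformly for some $g\in C([-T,0])$. Extending every function by the paper's convention ($g(x)=g(0)$ for $x>0$), the uniform convergence persists on $[-T,1]$, which is the range needed to evaluate the increments for $\varepsilon\in(0,1)$. The candidate limit is this $g$, and it remains to show $g\in V_2$ together with $\|g_n-g\|_{V_2}\to 0$.

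The decisive step, and the main obstacle, is to transfer the Cauchy estimate from finite $m$ to the limit while controlling the supremum over $\varepsilon$. Fix $\delta>0$ and $N$ with $q(g_n-g_m)\le\delta$ for $n,m\ge N$; in particular $p_\varepsilon(g_n-g_m)\le\delta$ for every $\varepsilon$. For each fixed $\varepsilon$, the integrand $(g_n(s+\varepsilon)-g_m(s+\varepsilon)-g_n(s)+g_m(s))^2$ converges uniformly on the bounded interval $[-T,0]$ as $m\to\infty$, and the measure $\frac{ds}{\varepsilon}$ is finite there, so $p_\varepsilon(g_n-g_m)\to p_\varepsilon(g_n-g)$; hence $p_\varepsilon(g_n-g)\le\delta$ for every $\varepsilon$ and every $n\ge N$. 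Taking the supremum over $\varepsilon$ yields $q(g_n-g)\le\delta$ for $n\ge N$. This simultaneously shows $q(g)\le q(g_N)+q(g_N-g)<\infty$, so $g\in V_2$, and, combined with $|g_n-g|_\infty\to0$, gives $\|g_n-g\|_{V_2}\le|g_n-g|_\infty+\delta$, whence $\limsup_n\|g_n-g\|_{V_2}\le\delta$. As $\delta$ is arbitrary, $g_n\to g$ in $V_2$ and $V_2$ is complete. The only delicate point is the interchange of the limit in $m$ with the supremum over $\varepsilon$, which is handled cleanly by fixing $\varepsilon$ first, where the fixed-$\varepsilon$ functional is continuous under uniform convergence, and only afterwards taking the supremum; alternatively one may invoke the lower semicontinuity of $q$ under pointwise convergence via Fatou's lemma.
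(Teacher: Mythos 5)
Your proof is correct and follows essentially the same route as the paper: obtain the uniform limit $g$ from completeness of $C([-T,0])$, then fix $\varepsilon$, pass to the limit in $m$ in the Cauchy estimate, and only afterwards take the supremum over $\varepsilon$. You go beyond the paper's proof in two useful ways. First, you observe that $\vert g\vert_{2,var}$ as displayed is homogeneous of degree two, so that $g\mapsto \vert g\vert_\infty+\vert g\vert_{2,var}$ literally fails both homogeneity and subadditivity (take $f=g$: $\int(2f)^2=4\int f^2>2\int f^2$); the paper's appeal to ``the triangle inequality related to the $L^2$-norm'' only works for the square root $q(g)=\sup_\varepsilon p_\varepsilon(g)=\sqrt{\vert g\vert_{2,var}}$, which is the quantity one must put into the norm, and your seminorm decomposition via $p_\varepsilon$ makes this precise. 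Second, you explicitly establish $\Vert g_n-g\Vert_{V_2}\to 0$ rather than only $g\in V_2$: the paper stops at showing $\vert g\vert_{2,var}<\infty$ via the bound $2M+2\vert g_N\vert_{2,var}$ and asserts that ``the result follows,'' whereas your estimate $q(g_n-g)\le\delta$ for $n\ge N$ delivers both membership and norm convergence in one stroke. The limit interchange you flag as the delicate point is handled the same way in both arguments and is sound.
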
 
\begin{proof} \ To prove that  $\vert \cdot \vert_{2,var}$ 
is a norm, the only non-obvious property is
 the triangle inequality.
That follows because of the triangle inequality related to the 
$L^2([-T,0])$-norm.
It remains to show  that any Cauchy sequence in $V_2$ converges to an element of $V_2$.
Let $(g_n)$ be such a sequence. Since $C([-T,0])$ is a Banach space, there is $g \in C([-T,0])$
such that $g_n$ converges uniformly to $g$. 
Let $M >0$.
Since $(g_n)$ is a Cauchy sequence with respect to $\vert \cdot \vert_{2,var}$,
 there is $N$ such that if $n, m \ge N$, with 
$$  \int_{-T}^0 \left ((g_n - g_m)(r+\varepsilon) - (g_n - g_m)(r) \right )^2 \frac{dr}{\varepsilon} \le M,$$
for every $0 < \varepsilon < T$.
Let us fix  $0 < \varepsilon < T$. 
Choosing $m = N$ in previous expression   and letting $n$ go to $\infty$ 
it follows that 
\begin{eqnarray*} 
 \int_{-T}^0 \left (g(r+\varepsilon) - g(r) \right )^2 \frac{dr}{\varepsilon} &\le&
2 \int_{-T}^0 \left (
(g - g_N)(r+\varepsilon) - (g - g_N(r)) \right )^2 \frac{dr}{\varepsilon}   \\
&+& 2 \int_{-T}^0 \left (g_N(r+\varepsilon) - g_N(r)\right )^2 \frac{dr}{\varepsilon}   \\
&\le & 2 M +  2 \int_{-T}^0 \left (g_N(r+\varepsilon) - g_N(r) \right )^2 \frac{dr}{\varepsilon}.
\end{eqnarray*}
Taking the supremum on $0 < \varepsilon < T$, we get that $\vert g \vert_{2,var}$ is finite
and the result follows.
\end{proof}
$V_2$ is a Banach subspace of $B$. 
Given a continuous function $\psi: [0,T] \rightarrow \R$ be a continuous 
increasing function such that $\psi(0) = 0$,
we  denote by $V_{2,\psi}$ the space of functions $\eta:[-T,0] \rightarrow \R$ 
such that $[\eta]$ exists and equals  $ \psi$.
\begin{prop}\label{P313} 
$V_{2,\psi}$ is a closed subspace of $V_{2}$. 
\end{prop}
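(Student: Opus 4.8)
The plan is to prove that $V_{2,\psi}$ is closed as a subset of the Banach space $(V_2,\, |\cdot|_\infty + |\cdot|_{2,var})$. Concretely, I take a sequence $(\eta_n) \subset V_{2,\psi}$ that converges in $V_2$ to some $\eta \in V_2$, and I aim to show that $[\eta]$ exists and equals $\psi$, so that $\eta \in V_{2,\psi}$. The limit $\eta$ belongs to $V_2$ by the very definition of $V_2$-convergence (and $V_2$ is complete by Proposition \ref{V2Alter}); the only genuine task is to identify the value of its quadratic variation. For $0<\varepsilon<1$ and $x\in[-T,0]$ I abbreviate $C_\varepsilon(\eta)(x) := \int_0^x (\eta(r+\varepsilon) - \eta(r))^2 \frac{dr}{\varepsilon}$, so that by definition $[\eta](x) = \lim_{\varepsilon\to 0} C_\varepsilon(\eta)(x)$ whenever the limit exists, and by hypothesis $C_\varepsilon(\eta_n)(x) \to \psi(x)$ as $\varepsilon \to 0$ for every fixed $n$.

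The heart of the argument is a uniform-in-$\varepsilon$ comparison of $C_\varepsilon(\eta)$ with $C_\varepsilon(\eta_n)$. Setting $a = \eta(r+\varepsilon)-\eta(r)$ and $b = \eta_n(r+\varepsilon)-\eta_n(r)$ and using the polarization identity $a^2-b^2=(a-b)(a+b)$ together with the Cauchy--Schwarz inequality on $[-T,0]$, I would estimate, for $0<\varepsilon<1$,
\[
|C_\varepsilon(\eta)(x) - C_\varepsilon(\eta_n)(x)| \le \int_{-T}^0 |a-b|\,|a+b|\,\frac{dr}{\varepsilon} \le \left(\int_{-T}^0 (a-b)^2 \frac{dr}{\varepsilon}\right)^{1/2}\!\left(\int_{-T}^0 (a+b)^2 \frac{dr}{\varepsilon}\right)^{1/2} \le |\eta - \eta_n|_{2,var}\,|\eta + \eta_n|_{2,var},
\]
where the last inequality is exactly the definition of $|\cdot|_{2,var}$ applied to $\eta-\eta_n$ and $\eta+\eta_n$ (note $|\int_0^x|\le\int_{-T}^0$ of the nonnegative integrand). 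Since $\big||\eta_n|_{2,var}-|\eta|_{2,var}\big|\le|\eta_n-\eta|_{2,var}\to 0$, the quantity $|\eta+\eta_n|_{2,var}\le|\eta|_{2,var}+|\eta_n|_{2,var}$ is bounded by a constant $C$ independent of $n$; hence the whole difference is controlled by $C\,|\eta-\eta_n|_{2,var}$, \emph{uniformly} in $\varepsilon$ and in $x$.

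With this estimate the conclusion follows by a standard three-$\varepsilon$ argument that interchanges the two limits. Fixing $x\in[-T,0]$ and $\delta>0$, I first choose $n$ so large that $C\,|\eta-\eta_n|_{2,var}<\delta/2$; then, for that fixed $n$, I invoke $C_\varepsilon(\eta_n)(x)\to\psi(x)$ to find $\varepsilon_0$ such that $|C_\varepsilon(\eta_n)(x)-\psi(x)|<\delta/2$ for all $\varepsilon<\varepsilon_0$. The triangle inequality then yields $|C_\varepsilon(\eta)(x)-\psi(x)|<\delta$ for all sufficiently small $\varepsilon$, so $[\eta](x)=\psi(x)$; as $x$ is arbitrary, $[\eta]=\psi$ and $\eta\in V_{2,\psi}$, proving closedness. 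The only delicate point is precisely this exchange of the limits $\varepsilon\to 0$ and $n\to\infty$: everything hinges on the uniform-in-$\varepsilon$ bound produced by the polarization identity and Cauchy--Schwarz, which is what legitimizes the interchange. The sign/orientation of $\int_0^x$ for $x<0$ plays no role, since all estimates are carried out on absolute values.
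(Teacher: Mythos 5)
Your proof is correct and follows essentially the same route as the paper's: both extract a uniform-in-$\varepsilon$ bound on the discrepancy between the $\varepsilon$-approximations of $[\eta]$ and of $[\eta_n]$ in terms of the $|\cdot|_{2,var}$ distance, and then interchange the limits $\varepsilon\to0$ and $n\to\infty$ by the same three-term splitting and diagonal argument. The only (cosmetic) differences are that you obtain the uniform bound via polarization and Cauchy--Schwarz, where the paper uses the cruder splitting $a^2\le 2(a-b)^2+2b^2$, and that your final estimate should carry square roots, i.e.\ $\bigl(|\eta-\eta_n|_{2,var}\bigr)^{1/2}\bigl(|\eta+\eta_n|_{2,var}\bigr)^{1/2}$, if one uses the paper's literal (unsquare-rooted) definition of $|\cdot|_{2,var}$ --- an ambiguity already present in the paper and harmless for the conclusion.
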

\begin{proof} \
  Let $(g_n)$ be a sequence in $V_{2,0}$ i.e. such that  $[g_n](x), x \in [-T,0]$ exists and
 equals   $\psi$.
We suppose that $g_n$ converges to $g$ in $V_2$. 
Now, for fixed $\varepsilon > 0$, $x \in [-T,0]$ we consider
\begin{equation}\label{IntCov}
I_\psi(\varepsilon,x) := -\int_{x}^0 dr  \left (g(r+\varepsilon) - g(r) \right )^2 \frac{dr}{\varepsilon} - \psi(x).
\end{equation}
We want to prove that for every  $x \in \R$,   $I_\psi(\varepsilon,x)$
converges to $0$,
when $\varepsilon \rightarrow 0+$. 
The left-hand side of \eqref{IntCov}  is bounded by
 $4 I_1(\varepsilon,N,x) + 4 I_2(\varepsilon,N,x) +4 I_3(N,x)$,
where, for $x \in [-T,0], N \in \N^\ast$,
\begin{eqnarray} \label{IN}
I_1(\varepsilon,N,x) &=&  \left \vert \int_{x}^0 \left (
(g - g_N)(r+\varepsilon) - (g - g_N(r) \right )^2 \frac{dr}{\varepsilon} 
\right \vert  \nonumber \\
I_2(\varepsilon,N,x) & =& \left \vert \int_{x}^0 \left(g_N (r+\varepsilon) - g_N(r)\right)^2 
\frac{dr}{\varepsilon} 
- [g_N](x)  \right \vert  \\
I_3 (N,x)&=& \vert [g_N](x) - \psi(x)\vert. \nonumber 
\end{eqnarray}
 We fix $x \in [-T,0]$. 
 Since  $g_N \in V_{2,\psi}$ then $[g_N] = [g] = \psi$ and $I_3(N,x)$ equals zero.
Let $\delta > 0$ and $N$
such that for every $0 < \varepsilon <T$, 
$I_1(\varepsilon, N,x) \le \delta $.
Choose $\varepsilon_0$ such that 
$ I_2(\varepsilon, N,x) \le \delta $
if $0 < \varepsilon < \varepsilon_0$.
Consequently for  $0 < \varepsilon < \varepsilon_0$,
then we have $I_\psi(\varepsilon,x) \le 2\delta $.
This shows that  $I_\psi(\varepsilon,x)$ converges to zero and
so  $V_{2,\psi}$ is a closed subspace of $V_2$.
\end{proof}

\section{About infinite dimensional classical stochastic calculus}

\subsection{Generalities}
\label{IDCSCG}

Infinite dimensional stochastic calculus is an important tool for studying
  properties related to stochastic evolution problems,
 as stochastic partial differential equations, stochastic functional equations, as delay equations.
When the evolution space is Hilbert a lot of work was performed, see typically
 the celebrated monograph of G. Da Prato and J. Zabczyk
 \cite{DaPratoZabczyk92}, 
in particular Section \ref{DBZI} mentions the corresponding notion
of stochastic integral.
An alternative, similar approach, is the one related to
 random  fields, see e.g.  \cite{Walsh86} and \cite{Dalang99}. 
Infinite dimensional stochastic calculus has been also developed
in the framework of  Gelfand triples, used for instance in
 \cite{pardouxthesis}.
Contributions exist also for  Banach space valued 
stochastic integrals, see  \cite{brez, det, det1, vanNervUMD}, 
where  the situation is more involved
than in the Hilbert framework:
the so-called reproducing kernel space cannot   be 
described as $Im (Q^{1/2})$, as in Section \ref{DBZI}, and the notion of
 Hilbert-Schmidt operator has to be substituted 
with the one  of $\gamma$-radonifying. \\
The aim of our approach is to try to introduce suitable techniques which
allow to treat typical infinite dimensional processes similarly to finite-dimensional
 diffusions.
As we mentioned, stochastic process with values in infinite dimensional
spaces will be indicated by a bold  letter of the type $\X, \Y, \Z$  and so on.
Let $B$ be a separable Banach space and  $\mathbb{X}$ be a $B$-valued process.
Consider $F:B\longrightarrow \mathbb{R}$ be of class $C^{2}$ in the  Fr\'echet sense.
One may ask what could be a good It\^o formula in this framework.
 We are interested in an It\^o type expansion  of $F(\mathbb{X})$,
keeping in mind that, classically, It\^o formulae contain an integral term
 involving second order type derivatives 
and a quadratic variation.
We first introduce some classical notions of quadratic variation
very close to those of the literature, see \cite{dincuvisi, MetivierPellaumail80, Metivier82}, but in the spirit of calculus via regularization.
 Those above mentioned authors  introduce in fact two quadratic variations:
the {\it real} and the {\it tensor} quadratic variation.
The definition below is a reformulation in terms of 
regularization of the {\it real} quadratic variation of $\X$.
We prefer here, to avoid possible confusions, to replace
the denomination {\it real} with {\it scalar}.
\begin{dfn}
\label{def:scalarquadraticvariation}
Consider a separable Banach space $B$.
We say that a (strongly) measurable process $\X\colon [0,T]\times \Omega \to B$
 admits a {\bf scalar quadratic variation} if, for any $t\in [0,T]$,
 the limit, for $\epsilon\searrow 0$ of 
\[
[\X,\X]^{\epsilon, \mathbb{R}}_t:= \int_{0}^{t} \frac{\left 
|\X_{r+\epsilon}-\X_{r} \right |^2_B}{\epsilon} dr,
\]
exists in probability and it admits a continuous version. The limit process 
is called {\bf scalar quadratic variation} of $\X$ 
and it is denoted by $[\X,\X]^{\mathbb{R}}$. 
\end{dfn}

In Definition 1.4 of \cite{DGR1}  the authors introduce the following definition.
\begin{dfn}\label{def:tensorcovariation}
Consider two separable Banach spaces $B_1$ and $B_2$.
Suppose that either $B_1$ or $B_2$ is different from $\mathbb{R}$.
Let  $\X\colon [0,T]\times \Omega \to B_1$ and 
$\Y\colon [0,T]\times \Omega \to B_2$  be two (strongly) 
measurable processes.
We say that $(\X,\Y)$ admits a {\bf tensor covariation} if the limit,
 for $\epsilon\searrow 0$ of the $B_1\hat\otimes_\pi B_2$-valued processes
\[
[\X,\Y]^{\otimes, \epsilon}:= \int_{0}^{\cdot} \frac{ \left (\X_{r+\epsilon}-\X_{r} \right ) \otimes
\left (\Y_{r+\epsilon}-\Y_{r}\right )}{\epsilon}dr
\]
exists in the ucp sense (i.e. uniform convergence in probability).
 The limit process is called {\bf tensor covariation} of $(\X,\Y)$ 
and is denoted by $[\X,\Y]^\otimes$. The tensor covariation $[\X,\X]^\otimes$ is called 
{\bf tensor quadratic variation} of $\X$ and denoted by $[\X]^\otimes$.
\end{dfn}

\begin{rem} \label{R7.7} Let $\X,  \Y$  be 
measurable processes 
defined on $[0,T] \times \Omega  $ 
with values respectively on $B_1$ and $B_2$.
We  have the
following. 
\begin{enumerate}
\item If $\X$  has a zero scalar quadratic variation 
and $\Y$ has a scalar quadratic variation then 
$[\X, \Y]^\otimes =  [\Y, \X]^\otimes = 0$.
Moreover $\X + \Y$ has a scalar quadratic variation and $[\X + \Y ]^\R = [\Y]^\R $;
\item If $\X$ is a bounded variation process 
 then $\X$ admits a zero 
scalar quadratic variation. 
 \item 
 Let $\M$ be a local martingale with values in a separable Hilbert space $H$.
Then it has a scalar quadratic  variation.
\item  If $B=\R^{n}$ the space $B \hat \otimes_\pi B$ is associated with
 the space of $n \times n$
real matrices, as follows. Let $(e_i, 1 \le i \le n)$ be the canonical
 orthonormal basis of 
$\R^n$. A matrix $A = (a_{ij})$ is naturally associated 
with the element $\sum_{i,j=1}^n a_{ij} e_i \otimes e_j$. 
 $\mathbb{X}=(X^{1},\ldots,X^{n})$ admits all its mutual covariations
if and only if $\mathbb{X}$ admits a tensor quadratic variation.
Moreover  $[\X, \X] = \sum_{i,j=1}^n [X_i, X_j] e_i \otimes e_j$. 
\end{enumerate}
Items 1. and 2. are easy to establish. Item 3. is stated in Remark 4.9 of 
\cite{RusFab}. Item 4. constitutes an easy exercise, but it was stated in 
Section 6.2.1 of \cite{DGR}.
\end{rem}

Let us consider now 
$F:B\longrightarrow \mathbb{R}$ be of class $C^{2}$.
 In particular  $DF: B\longrightarrow  \shl(B;\R):=B^{\ast}$ 
and  $D^{2}F:B \longrightarrow \shl(B; B^{\ast})\cong 
{\mathcal Bi}(B,B)\cong (B\hat{\otimes}_{\pi}B)^{\ast}$
are continuous. 
As a first attempt, we expect to obtain  an It\^o formula type expansion
 of the following type.
\begin{equation}
\label{FITOAtt} 
F(\mathbb{X}_{t})
= F(\mathbb{X}_{0})+ ''\int_{0}^{t} 
{}_{{B^{\ast}}}{\langle} DF(\mathbb{X}_{s}), d\mathbb{X}_{s}\rangle_{B} \; ''  
+ \frac{1}{2}  \int_{0}^{t} 
{}_{(B\hat{\otimes}_{\pi}B)^{\ast}}{\langle} D^{2}F(\mathbb{X}_{s}),
 d[\mathbb{X}]_{s}\rangle_{B\hat{\otimes}_{\pi}B}.
\end{equation}
This supposes of course that the tensor covariation $[\X,\X]^\otimes$ 
exists and it has bounded variation. A reasonable sufficient condition for this
demands that the scalar quadratic variation $[\X, \X]^\R$ exists. 
A formal proof of the It\^o formula, inspired from the one-dimensional
case could be the  following. Let $\varepsilon > 0$. 
We have
\[
\int_{0}^{t} \frac{F(\mathbb{X}_{s+\epsilon})-F( \mathbb{X}_{s}) }
{\epsilon} ds \xrightarrow[\epsilon\rightarrow 0] 
 {ucp}F(\mathbb{X}_{t})-F( \mathbb{X}_{0}), t \in [0,T]. 
\]
By a Taylor's expansion, the left-hand side equals the sum  
\[
 \int_{0}^{t}
{}_{B^{\ast}}{\langle}DF(\mathbb{X}_{s}), \frac{\mathbb{X}_{s+\epsilon}- \mathbb{X}_{s} }{\epsilon}\rangle_{B} ds \; + 
\int_{0}^{t}
%\prescript{}
{}_{(B\hat{\otimes}_{\pi}B)^{\ast}}{\langle}D^{2}F( \mathbb{X}_{s}),  \frac{(\mathbb{X}_{s+\epsilon}- \mathbb{X}_{s})\otimes^{2} }
{\epsilon}\rangle_{B\hat{\otimes}_{\pi}B} ds + R(\epsilon,t),
 \]
where $R(\varepsilon, \cdot)$ converges ucp to zero.
Consequently, previous formal proof requires
 a good  notion
of quadratic variation.
Moreover the first (stochastic) integral needs to be defined.
The following natural obstacles    appear. 
\begin{itemize}
\item In many interesting cases mentioned at the beginning
of Section \ref{IDCSCG}, $\X$ 
 is not a semimartingale, and it
 has not even a scalar and tensor quadratic variations. 
\item Stochastic integration, when the integrator takes values 
in a Banach space
is not an easy task.
\end{itemize}

\bigskip

\subsection{Tensor covariation and operator-valued covariation}
\label{TensDPZ}

In Definition \ref{def:tensorcovariation} we introduced the notion of
 tensor covariation in the spirit of M\'etivier and Pellaumail.
Before proceeding  and introducing the more general definition of 
\emph{$\chi$-covariation} we devote some space recalling another
 (somehow classical) 
definition used for example by several authors in stochastic calculus 
in Hilbert spaces, as
  Da Prato and Zabczyk.

Let $H$ be a separable Hilbert spaces
and $\M,\N$ be two $H$-valued continuous
 local martingales.% $\M$ [resp. $\N$].
The first (tensor) covariation was denoted by $[\M,\N]^\otimes$,
the second one will be denoted by $[\M,\N]^{cl}$.
A first difference arises by the fact 
 $[\M,\N]^\otimes$ takes values in $H \hat\otimes_\pi  H$ 
and  $[\M,\N]^{cl}$ lives in $\shl_1(H)$.

We remind from Section \ref{SPrelim1} that every  element 
$u \in H\hat\otimes_\pi H$  is isometrically associated 
with an element $T_u$ in the space of nuclear operators  
$\shl_1(H)$,
 so it makes sense to compare  Definition \ref{def:tensorcovariation} and the definition of
 operator-valued covariation. 

\begin{dfn} \label{DZCovariation}
Let $\X$ and $\Y$ be two $H$-valued continuous processes. 
We say that $(\X, \Y)$ admits an {\bf operator-valued
 covariation}, denoted by $[\X,\Y]^{cl}$, if there exists
a bounded variation process $\V$ with values in $\shl_1(H)$, 
denoted by $[\X,\Y]^{cl}$,   such that, 
for every $a, b  \in H $,
the covariation (in the sense of regularization) of 
$\langle a, \X \rangle$ and $\langle b, \Y \rangle$  equals $\langle \V a, b \rangle$.

In the sequel we will also use the notation $[\X, \Y]^{cl}(a, b) :=  \langle \V a, b \rangle$. In other words 
the continuous linear functional $[\X, \Y]^{cl}(a, \cdot)$ is Riesz-identified to $\V a$.
We will of course identify without further mention $\V(a)$ and $[\X,\Y]^{cl}(a, \cdot)$. 
\end{dfn}
\begin{rem} \label{RCovDZ}
\begin{enumerate}
\item If $\X$ and $\Y$  are local martingales then 
$\langle a, \X \rangle$ and $\langle b, \Y \rangle$ are real local martingales
and previous covariations in the sense of regularization
are classical covariations of martingales, see Proposition 2.4 (3) 
of \cite{rv93} and item 5. of Proposition \ref{Pproperties}.
\item If $\X = \Y$  is a local martingale and $\V$ is $[\X, \X]^{cl}$,
 then, by Doob-Meyer decomposition,
 $\V$ fulfills the following property.
For every $a, b \in H$,  we have
  $\langle a, \X \rangle   \langle b, \X \rangle - 
 \langle \V a, b \rangle$ is a local martingale and obviously
 $\langle \V_\cdot (a), a \rangle  \ge 0 $ is a non-negative increasing
process for every $a \in H$;
in particular, for every $t \in [0,T]$, $\V_t$ is a non-negative 
map in $\shl_1(H)$.
\item   Proposition 3.12 in \cite{DaPratoZabczyk92} states   that for
 a continuous square integrable martingale $\X$, 
the quadratic variation exists (and is unique). By stopping arguments, this can be extended to every local martingale $\X$.
\end{enumerate}
\end{rem}
The proposition below illustrates some relations between the tensor covariation and the operator-valued covariation.
\begin{prop} \label{PDZTens}
\begin{enumerate}
\item The operator-valued covariation is unique.
\item If $(\X,\Y)$  admits a tensor covariation then, it also has
 an  operator-valued covariation
and, after the identification above between $H \hat \otimes_\pi H$ 
 and $\shl_1(H)$, they are equal.\\
 In particular, for every $a \in H, b \in H$,
$\langle j(a^*\otimes b^*), [\X, \Y]^{\otimes}\rangle
= [\langle \X,a \rangle, \langle \Y, b \rangle]$.
 \item If $\X$ and $\Y$ are local martingales then they admit  a scalar
quadratic variation. Moreover, $(\X, \Y)$
admits tensor and operator-valued covariations.
\end{enumerate}
\end{prop}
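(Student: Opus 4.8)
The plan is to handle the three items in order, the first two being essentially formal and the third carrying the analytic content. For item 1, I would argue directly from Definition \ref{DZCovariation}: if $\V$ and $\V'$ are two bounded variation $\shl_1(H)$-valued processes both realizing the operator-valued covariation, then for every $a,b\in H$ and every $t$ one has $\langle \V_t a,b\rangle = [\langle a,\X\rangle,\langle b,\Y\rangle]_t = \langle \V'_t a,b\rangle$ almost surely. Choosing a countable dense subset of $H$ and exploiting the continuity in $t$ of the regularization covariation, I would conclude that $\V$ and $\V'$ are indistinguishable, since a bounded operator is determined by its associated bilinear form on a dense set.

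For item 2, suppose $[\X,\Y]^\otimes$ exists as the ucp limit of $[\X,\Y]^{\otimes,\epsilon}$ in $H\hat\otimes_\pi H$. The key step is to test this limit against the functionals $j(a^*\otimes b^*)\in (H\hat\otimes_\pi H)^\ast$ of Lemma \ref{lm:aggiunto}. Since $j(a^*\otimes b^*)$ is continuous and linear and, via Riesz identification, acts on a simple tensor $x\otimes y$ as $\langle a,x\rangle\langle b,y\rangle$, exchanging it with the Bochner integral defining $[\X,\Y]^{\otimes,\epsilon}$ yields
\[
\langle j(a^*\otimes b^*),[\X,\Y]^{\otimes,\epsilon}_t\rangle = \int_0^t \frac{\langle a,\X_{r+\epsilon}-\X_r\rangle\,\langle b,\Y_{r+\epsilon}-\Y_r\rangle}{\epsilon}\,dr,
\]
which is precisely the $\epsilon$-approximation of the real covariation $[\langle a,\X\rangle,\langle b,\Y\rangle]$. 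Passing to the limit (the left-hand side converges because $j(a^*\otimes b^*)$ is continuous) shows both that this real covariation exists and that it equals $\langle j(a^*\otimes b^*),[\X,\Y]^\otimes_t\rangle$, which is the announced identity. I would then set $\V_t:=T_{[\X,\Y]^\otimes_t}$, the nuclear operator associated to $[\X,\Y]^\otimes_t$ through the isometric isomorphism \eqref{Isomorphism}; a direct computation gives $\langle \V_t a,b\rangle = \langle j(a^*\otimes b^*),[\X,\Y]^\otimes_t\rangle$, so $\V$ realizes the operator-valued covariation. As $u\mapsto T_u$ is isometric, $\V$ inherits the bounded variation of $[\X,\Y]^\otimes$, and the two objects coincide under the identification.

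For item 3, the existence of the scalar quadratic variations of $\X$ and $\Y$ is Remark \ref{R7.7}, item 3, and the existence of the operator-valued covariation of a Hilbert-valued local martingale is Remark \ref{RCovDZ}, item 3 (Da Prato--Zabczyk), extended to $(\X,\Y)$ by polarization. The substantial point is the existence of the \emph{tensor} covariation, i.e. upgrading the convergence of $[\X,\Y]^{\otimes,\epsilon}$ from the scalar/componentwise level to the projective norm. Here I would first reduce to the tensor quadratic variation of a single martingale by viewing $(\X,\Y)$ as an $H\oplus H$-valued martingale, whose tensor quadratic variation produces all the blocks $\X\otimes\X,\X\otimes\Y,\Y\otimes\X,\Y\otimes\Y$ via the continuous block projections. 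For a single martingale $\M$, the approximations $[\M]^{\otimes,\epsilon}_t$ correspond through $T_\bullet$ to \emph{non-negative} nuclear operators, whose $\shl_1(H)$-norm equals their trace $\int_0^t \frac{|\M_{r+\epsilon}-\M_r|^2}{\epsilon}\,dr = [\M,\M]^{\R,\epsilon}_t$. I thus dispose of (i) componentwise convergence towards the operator-valued covariation, as in item 2, and (ii) convergence of the traces towards the scalar quadratic variation $[\M,\M]^\R_t$. The main obstacle is exactly to deduce $\shl_1(H)$-norm convergence from (i) and (ii): this is where positivity is essential, through the Scheff\'e-type principle that for non-negative trace-class operators weak convergence together with convergence of the traces forces trace-norm convergence. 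I would finally promote the resulting convergence in probability for fixed $t$ to the ucp sense by a Dini-type argument, using the monotonicity of $t\mapsto [\M,\M]^{\R,\epsilon}_t$ and the continuity of the limit, after which item 2 returns the operator-valued covariation at no extra cost.
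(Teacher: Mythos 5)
Your items 1 and 2 match the paper's argument in substance: both rest on the identity $\langle j(a^*\otimes b^*),[\X,\Y]^{\otimes,\epsilon}_t\rangle=\frac{1}{\epsilon}\int_0^t\langle \X_{r+\epsilon}-\X_r,a\rangle\langle\Y_{r+\epsilon}-\Y_r,b\rangle\,dr$ and on the isometry \eqref{Isomorphism}. The only cosmetic difference is in the uniqueness step: the paper transports $\V^1,\V^2$ back to $H\hat\otimes_\pi H$ and invokes the weak-star density of $H^*\otimes H^*$ in $(H\hat\otimes_\pi H)^*$ (Lemma 4.17 of \cite{RusFab}), whereas you argue that a bounded operator is determined by its bilinear form on a dense set; both are valid and yours is the more elementary.

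Where you genuinely diverge is item 3. The paper disposes of it in two citations: Proposition 1.7 of \cite{DGR1} for the scalar quadratic variation and Lemma 4.16 of \cite{RusFab} for the tensor covariation, the operator-valued one then following from item 2. You instead sketch a self-contained proof: reduce to a single $H\oplus H$-valued martingale, observe that the approximants are non-negative nuclear operators whose $\shl_1$-norms are exactly the traces $[\M,\M]^{\R,\epsilon}_t$, and upgrade componentwise convergence to $\shl_1$-convergence via the Scheff\'e--Gr\"umm principle (for positive trace-class operators, weak operator convergence plus convergence of traces implies trace-norm convergence). This is a legitimate and rather illuminating route -- it makes visible exactly where positivity and the scalar quadratic variation enter -- and it buys independence from the external lemma at the cost of three points you should not leave implicit: (a) the componentwise and trace convergences are only in probability for each fixed $(a,b)$ and each $t$, so you must extract a subsequence along which convergence holds a.s.\ simultaneously for a countable dense family of $(a,b)$ and of times, before the deterministic Scheff\'e argument applies $\omega$-wise; (b) you need the identification $\mathrm{Tr}\bigl([\M,\M]^{cl}_t\bigr)=[\M,\M]^{\R}_t$, i.e.\ that the trace of the Da Prato--Zabczyk bracket coincides with the regularization scalar quadratic variation, which is true but requires the Doob--Meyer characterization rather than a bare interchange of limit and sum; (c) the Dini upgrade to ucp should be run on the traces (monotone in $t$, continuous limit), using $\|T^\epsilon_s-T^\epsilon_t\|_{\shl_1(H)}=[\M]^{\R,\epsilon}_s-[\M]^{\R,\epsilon}_t$ for $s>t$ to get equicontinuity in $\shl_1(H)$. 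With those three points filled in, your argument is complete.
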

\begin{proof}
Let $\varepsilon >0$. Taking into account Lemma \ref{lm:aggiunto}, choosing $\varphi \in (H \hat \otimes_\pi H)^*$ of the type $\varphi = j (a^* \otimes b^*)$ where $a,b,\in H$,  we have
\begin{multline}
\label{DZ1}
{\phantom{\bigg (}}_{(H\hat\otimes_\pi H)^*} 
\left\langle \varphi,
 \frac{1}{\varepsilon}\int_0^t (\X_{s+\varepsilon} - \X_s) \otimes 
( \Y_{s+\varepsilon} -  \Y_s) ds \right \rangle_{H\hat\otimes_\pi H} \\
= 
\frac{1}{\varepsilon} 
\int_0^t ds  {\phantom{(}}_{(H\hat\otimes_\pi H)^*} 
\left\langle \varphi,
 (\X_{s+\varepsilon} - \X_s) \otimes ( \Y_{s+\varepsilon} -  \Y_s)
 \right \rangle_{H\hat\otimes_\pi H} 
 = 
 \frac{1}{\varepsilon} 
\int_0^t ds  \langle \X_{s+\varepsilon} - \X_s , a\rangle \langle  \Y_{s+\varepsilon} -  \Y_s, b \rangle.
\end{multline}
So the first expression of the equality above converges if  
the covariation of the real processes 
$\langle \X , a\rangle$ and  $\langle  \Y, b \rangle$ exists.
\begin{enumerate}
\item Let be two $\shl_1(H)$-
valued processes $\V^1, \V^2$ verifying
$$ \langle \V^i(a), b \rangle =   [\langle \X, a \rangle,\langle \Y, b
 \rangle ],  i = 1,2, $$
for every $a, b \in H$.
Let $\U^i$ be the associated process with values in  $H\hat\otimes_\pi H$
in the sense of the  usual isomorphism \eqref{Isomorphism}
   between  $H\hat\otimes_\pi H$ and $\shl^1(H)$.
Then, taking into account \eqref{DZ1}, for every $t \in [0,T]$ we have
$\langle \varphi, \U^1_t \rangle = \langle \varphi, \U^2_t \rangle  $,
 for every $\varphi$ in   $(H\hat\otimes_\pi H)^*$
of the type $\varphi = j(a^* \otimes b^*)$. Since, by 
Lemma 4.17 of \cite{RusFab}, the algebraic tensor product
$H^* \otimes H^*$ is weakly-star  dense in $(H\hat\otimes_\pi H)^*$,
the uniqueness property $\U^1 = \U^2$ holds.
\item Suppose that $[\X, \Y]^\otimes$ exists. Let $a, b \in H$
and set $\varphi = j(a^* \otimes b^*)$. 
If $[\X, \Y]^\otimes $ exists, by \eqref{DZ1}, then
$ \langle \varphi, [\X, \Y]^\otimes \rangle =   [\langle \X, a \rangle,\langle \Y, b \rangle]$. 
 We  set now 
 $[\X,\Y]^{cl}(a, b) = [\langle \X, a \rangle,\langle \Y, b \rangle]$.  
By the usual isomorphism \eqref{Isomorphism}, between   $H\hat\otimes_\pi H$ and $\shl_1(H)$, 
according to the convention in Definition \ref{DZCovariation}, 
$a \mapsto [\X,\Y]^{cl}(a,\cdot)$ defines an $\shl_1(H)$-valued process. \\
\item If $\M$ and $\N$ are local martingales, then
         $\M$ and $\N$ admit a scalar quadratic because of 
Proposition 1.7 of \cite{DGR1}.
Moreover  $(\M,\N)$ admits a tensor 
covariation by Lemma 4.16 of \cite{RusFab}.
By previous item, it also admits an operator-valued
covariation.
\end{enumerate}
\end{proof}

We specify now our result for some particular Hilbert valued martingales, namely for the Brownian martingales. 
The framework is the same we used in Subsection \ref{DBZI}.

\begin{prop} \label{CovMart}
Let $U$ and $H$ be two separable real Hilbert spaces. Let $Q$ be a positive self-adjoint, injective operator in $\mathcal{L}(U)$. We set $U_0:=Q^{1/2} (U)$ and we consider $\W^Q=\{\W^Q_t:0\leq t\leq T\}$ an $U$-valued  $Q$-Wiener process with $\W^Q_0=0$, $\mathbb{P}$ a.s. 
Let us suppose that $(\mathscr F_t)$ is the canonical filtration
  generated by $\W^Q$ and consider a predictable  $\mathcal{L}_2(U_0,H)$-valued process $(\Phi_t)$ such that

\begin{equation} \label{F47}
\int_0^T {\mathrm Tr} [\Phi_r Q^{1/2} (\Phi_r Q^{1/2})^*]  d r   < \infty \qquad \mathbb{P}-a.s.
\end{equation}
and the process $\M$ defined as $\M_t = \int_0^t \Phi_r d{\W^Q}_r, t \in [0, T]$.
We have the following.
\begin{enumerate}
\item  $[\M,\M]^{cl}_t =  \int_0^t Q^\Phi_r dr$
where
\[
Q^\Phi_t = (\Phi_t Q^{1/2})(\Phi_t Q^{1/2})^*.
\]
\item  $[\M,\M]^{\otimes}_t$ is characterized by   
$ \langle j(a^*\otimes b^*),   [\M,\M]^{\otimes}_t \rangle
  = \int_0^t \langle a, Q^\Phi_s b \rangle ds$ 
for any $a,b \in H$.
\item For every $\varphi \in (H \hat\otimes_\pi H)^\ast$, we have
\begin{equation} \label{EECovMart}
  {}_{(H \hat\otimes_\pi H)^\ast} \left \langle \varphi,   
 [\M,\M]^{\otimes}_t \right \rangle_{H \hat\otimes_\pi H} =
\int_0^t {\mathrm Tr}(L_\varphi Q^\Phi_r)dr.
\end{equation} 
\end{enumerate}

 \end{prop}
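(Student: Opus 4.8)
The plan is to reduce all three statements to the scalar covariations of the real processes $\langle a, \M \rangle$ and $\langle b, \M \rangle$ ($a,b \in H$) and to the classical bracket computed in the Da Prato--Zabczyk framework.

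First I would prove item 1. Fix $a, b \in H$. By Proposition \ref{PChainRule-2}(i), applied with the constant integrand $a$ (condition \eqref{EChainRule11} following from \eqref{F47}), the process $\langle a, \M \rangle = \int_0^\cdot a^\ast\, d\M$ is a real local martingale, and likewise for $b$. Since $\M$ is a local martingale, Remark \ref{RCovDZ}(1) identifies the regularization covariation $[\langle a, \M \rangle, \langle b, \M \rangle]$ with the classical bracket. The It\^o isometry for the Wiener integral with respect to $\W^Q$ gives that the bracket of $\langle a, \M \rangle$ equals $\int_0^t \langle Q^\Phi_r a, a \rangle\, dr$; by polarization,
\begin{equation*}
[\langle a, \M \rangle, \langle b, \M \rangle]_t = \int_0^t \langle Q^\Phi_r a, b \rangle\, dr = \int_0^t \langle a, Q^\Phi_r b \rangle\, dr,
\end{equation*}
where the last equality uses that $Q^\Phi_r$ is self-adjoint. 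Now \eqref{F47} states exactly that $r \mapsto \mathrm{Tr}(Q^\Phi_r) = \Vert Q^\Phi_r \Vert_{\shl_1(H)}$ is integrable, so Proposition \ref{RExchange} shows $\int_0^t Q^\Phi_r\, dr \in \shl_1^+(H)$. By Definition \ref{DZCovariation}, the displayed identity exactly says that $\int_0^t Q^\Phi_r\, dr$ is $[\M,\M]^{cl}_t$, proving item 1.

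Item 2 is then immediate: since $\M$ is a local martingale, Proposition \ref{PDZTens}(3) guarantees the existence of $[\M,\M]^\otimes$, and Proposition \ref{PDZTens}(2) gives $\langle j(a^\ast \otimes b^\ast), [\M,\M]^\otimes_t \rangle = [\langle \M, a \rangle, \langle \M, b \rangle]_t$, which combined with the bracket just computed is the characterization of item 2. For item 3, Proposition \ref{PDZTens}(2) also tells us that the nuclear operator associated with $[\M,\M]^\otimes_t$ under the isomorphism \eqref{Isomorphism} is $[\M,\M]^{cl}_t = \int_0^t Q^\Phi_r\, dr$. Hence Proposition \ref{prop:26} yields
\begin{equation*}
{}_{(H\hat\otimes_\pi H)^\ast}\langle \varphi, [\M,\M]^\otimes_t \rangle_{H\hat\otimes_\pi H} = \mathrm{Tr}\left( \left( \int_0^t Q^\Phi_r\, dr \right) L_\varphi \right).
\end{equation*}
It remains to exchange trace and integral. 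As $T \mapsto \mathrm{Tr}(T L_\varphi)$ is a continuous linear functional on $\shl_1(H)$ and $r \mapsto Q^\Phi_r$ is Bochner integrable in $\shl_1(H)$ by \eqref{F47}, the functional passes under the integral (Pettis property), and cyclicity of the trace produces $\int_0^t \mathrm{Tr}(L_\varphi Q^\Phi_r)\, dr$, which is \eqref{EECovMart}.

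The main obstacle is item 1: one must carefully match the classical Da Prato--Zabczyk angle bracket of the Hilbert-valued Wiener integral with the operator-valued covariation of Definition \ref{DZCovariation}, through the regularization/classical-bracket coincidence of Remark \ref{RCovDZ}, and verify the $\shl_1$-integrability required by Proposition \ref{RExchange}. Once item 1 is in place, items 2 and 3 are routine applications of Propositions \ref{PDZTens} and \ref{prop:26}.
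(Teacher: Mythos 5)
Your proof is correct and follows the same overall architecture as the paper's: items 2 and 3 are obtained exactly as in the paper, from Proposition \ref{PDZTens} (existence of the tensor covariation for a local martingale and its identification with the operator-valued one), Proposition \ref{prop:26}, and an exchange of trace and integral justified by the $\shl_1(H)$-Bochner integrability coming from \eqref{F47}. The only real divergence is in item 1: the paper disposes of it in one line by citing Theorem 4.12 of Da Prato--Zabczyk (which gives $[\M,\M]^{cl}_t=\int_0^t Q^\Phi_r\,dr$ under the assumption that the expectation of \eqref{F47} is finite) together with a stopping argument, whereas you reconstruct that computation by projecting onto fixed $a,b\in H$, computing the real bracket of $\langle a,\M\rangle$ via the It\^o isometry and polarization, and then checking through Proposition \ref{RExchange} that $\int_0^t Q^\Phi_r\,dr$ is a legitimate non-negative $\shl_1(H)$-valued bounded variation process, so that Definition \ref{DZCovariation} together with the uniqueness in Proposition \ref{PDZTens} identifies it as $[\M,\M]^{cl}_t$. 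Your route is more self-contained, but it is essentially the proof of the cited theorem, and your appeal to the It\^o isometry still implicitly requires the localization that the paper makes explicit. One small point you gloss over in item 2: the reason the evaluations on the elements $j(a^*\otimes b^*)$ \emph{characterize} $[\M,\M]^{\otimes}_t$ is the weak-star density of $H^*\otimes H^*$ in $(H\hat\otimes_\pi H)^*$ (Lemma 4.17 of \cite{RusFab}), which the paper invokes explicitly and which is worth stating.
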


\begin{proof}
\begin{enumerate} 
\item It is a consequence of  Theorem 4.12 in \cite{DaPratoZabczyk92}, 
where the result is stated under the hypothesis 
that the expectation of \eqref{F47} is finite.
 It can be extended to the general case with a stopping argument.
\item 
From Proposition \ref{PChainRule-2}
 we know that $\M$ is an $H$-valued local martingale. So
 by item 3. of Proposition \ref{PDZTens}, $\M$ admits both a  tensor and 
 an operator-valued quadratic variation; thanks to item 2. of the same proposition, they coincide 
once we have identified $H \hat \otimes_\pi H$ with $\shl^1(H)$. Item 2. of Proposition \ref{PDZTens} describes also the relation
 between the two and gives the evaluations $[\M,\M]^{\otimes}_t(a^*\otimes b^*)$.
 Since, by  Lemma 4.17 of \cite{RusFab}, the algebraic tensor product $H^* \otimes H^*$ is weakly-star  dense in
 $(H\hat\otimes_\pi H)^*$ the evaluation of  $[\M,\M]^{\otimes}_t$
on $j(a^*\otimes b^*)$ characterizes $[\M,\M]^{\otimes}_t$.
\item It follows from Proposition \ref{prop:26} and Proposition \ref{RExchange}.
\end{enumerate}
\end{proof}

\begin{lem} \label{L49}
Under the same assumptions of Proposition \ref{CovMart} we consider an
 $\mathcal{L}(H)$-valued process $(\Y_t)$ such that 
\begin{equation}
\int_0^T {\mathrm Tr} \left ( \Y_r \Phi_r Q^{1/2} ( \Y_r \Phi_r  Q^{1/2})^* 
\right ) d r < \infty  \qquad \mathbb{P}-a.s.
\end{equation}

Denote by $\mathbb{J}_t$ the element of ${(H\hat{\otimes}_{\pi}H)^{\ast}}$ corresponding to $\mathbb{Y}_t$ through
 the isomorphism described in (\ref{Isomorphism}).
Then
\[
\int_{0}^{t}
{}_{(H\hat{\otimes}_{\pi}H)^{\ast}}{\langle} \mathbb{J}_r, 
d[\mathbb{M}, \M]^\otimes_{r}\rangle_{H\hat{\otimes}_{\pi}H}dr
 = \int_{0}^{t}
{\mathrm Tr} \left [ Y_r (\Phi_r Q^{1/2})(\Phi_r Q^{1/2})^*  \right ]dr.
\]

\end{lem}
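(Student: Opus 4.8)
The plan is to use the explicit description of $[\M,\M]^\otimes$ furnished by Proposition \ref{CovMart} to convert the duality integral on the left-hand side into an ordinary Lebesgue integral, and then to evaluate the resulting integrand pointwise by means of Proposition \ref{prop:26}. Throughout I read the left-hand side as the Stieltjes pairing $\int_0^t {}_{(H\hat\otimes_\pi H)^*}\langle \mathbb{J}_r, d[\M,\M]^\otimes_r\rangle_{H\hat\otimes_\pi H}$.

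First I would observe that $[\M,\M]^\otimes$ is absolutely continuous. By item 1 of Proposition \ref{CovMart} the operator-valued covariation is $[\M,\M]^{cl}_t = \int_0^t Q^\Phi_r\,dr$, and by item 2 of Proposition \ref{PDZTens} the tensor and operator-valued covariations coincide once $H\hat\otimes_\pi H$ is identified isometrically with $\shl_1(H)$ via \eqref{Isomorphism}. Carrying that integral through this isometry (legitimate path by path, since $\int_0^T \|Q^\Phi_r\|_{\shl_1(H)}\,dr = \int_0^T \mathrm{Tr}(Q^\Phi_r)\,dr < \infty$ by \eqref{F47} and \eqref{EHSQ}) yields
\[
[\M,\M]^\otimes_t = \int_0^t \mathcal{Q}^\Phi_r\,dr,
\]
a Bochner integral in $H\hat\otimes_\pi H$, where $\mathcal{Q}^\Phi_r$ denotes the element of $H\hat\otimes_\pi H$ whose associated nuclear operator under \eqref{Isomorphism} is $Q^\Phi_r=(\Phi_r Q^{1/2})(\Phi_r Q^{1/2})^*$. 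Using this absolute continuity, the Stieltjes integral collapses to the Lebesgue integral of the pointwise pairing against the density, namely $\int_0^t \langle \mathbb{J}_r, \mathcal{Q}^\Phi_r\rangle\,dr$. Since $\mathbb{J}_r$ corresponds to $\Y_r$ in the sense $L_{\mathbb{J}_r}=\Y_r$ (recall \eqref{eq:expressionLB}) and the nuclear operator attached to $\mathcal{Q}^\Phi_r$ is $Q^\Phi_r$, Proposition \ref{prop:26} gives for a.e.\ $r$
\[
{}_{(H\hat\otimes_\pi H)^*}\langle \mathbb{J}_r, \mathcal{Q}^\Phi_r\rangle_{H\hat\otimes_\pi H} = \mathrm{Tr}(Q^\Phi_r \Y_r) = \mathrm{Tr}(\Y_r Q^\Phi_r),
\]
the last equality by cyclicity of the trace (valid since $Q^\Phi_r\in\shl_1(H)$ and $\Y_r\in\shl(H)$). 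Substituting $Q^\Phi_r=(\Phi_r Q^{1/2})(\Phi_r Q^{1/2})^*$ produces exactly the claimed identity.

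It remains to check finiteness, so that the reduction and the right-hand side both make sense. Writing $\mathrm{Tr}(\Y_r Q^\Phi_r) = \mathrm{Tr}\big((\Y_r\Phi_r Q^{1/2})(\Phi_r Q^{1/2})^*\big)$ and applying the Hilbert--Schmidt Cauchy--Schwarz inequality $|\mathrm{Tr}(AB^*)|\le \|A\|_{\shl_2}\|B\|_{\shl_2}$ with $A=\Y_r\Phi_r Q^{1/2}$, $B=\Phi_r Q^{1/2}$, followed by Cauchy--Schwarz in $L^2([0,T],dr)$, I obtain $\int_0^T |\mathrm{Tr}(\Y_r Q^\Phi_r)|\,dr<\infty$ a.s., because $\int_0^T\|\Y_r\Phi_r Q^{1/2}\|^2_{\shl_2}\,dr<\infty$ is the hypothesis of the lemma and $\int_0^T\|\Phi_r Q^{1/2}\|^2_{\shl_2}\,dr<\infty$ is \eqref{F47}, both read through \eqref{EHSQ}.

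The step I expect to be the main obstacle is the reduction in the second paragraph: rigorously justifying that the vector Stieltjes integral $\int_0^t\langle\mathbb{J}_r, d[\M,\M]^\otimes_r\rangle$, with a merely measurable, random, time-dependent dual integrand $\mathbb{J}$, collapses to $\int_0^t\langle\mathbb{J}_r,\mathcal{Q}^\Phi_r\rangle\,dr$. This rests on the absolute continuity of $[\M,\M]^\otimes$ together with the integrability just established, and on a standard but slightly delicate dominated-convergence (or simple-function approximation) argument for the duality pairing between $(H\hat\otimes_\pi H)^*$ and the bounded variation $H\hat\otimes_\pi H$-valued integrator. Once that identification is in place, the remaining steps are a pointwise application of Proposition \ref{prop:26} and the cyclicity of the trace.
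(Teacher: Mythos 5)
Your proof is correct and follows essentially the same route as the paper, whose own argument is simply a citation of item 3 of Proposition \ref{CovMart} together with Lemma \ref{LCovMart}. The two ingredients you develop in detail --- the absolute continuity of $[\M,\M]^{\otimes}$ with the pointwise trace evaluation via Proposition \ref{prop:26}, and the step-function/dominated-convergence reduction of the pairing with a time-dependent dual integrand --- are precisely the content of those two cited results.
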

\begin{proof}
It is a consequence of
% Proposition XXX of \cite{DGR} 
 item 3. of Proposition \ref{CovMart}  and by Lemma \ref{LCovMart} below.
\end{proof}

\begin{lem} \label{LCovMart} 
Consider $L$ and $T$  in the sense of the lines before Proposition 
\ref{prop:26}.
 Let $\dot G: [0,T] \rightarrow \shl(H)$, and  $\dot g: 
 [0,T] \rightarrow (H \hat \otimes_\pi H)^\ast$
such that for every $r \in [0,T]$,  $\dot G(r) = L_{\dot g(r)}$
is Lebesgue-Bochner integrable on $[0,T]$. We define $G:  [0,T] \rightarrow \shl(H)$,
by $G(t) = \int_0^t \dot G(r) dr$ and  $g(t) = \int_0^t \dot g(r) dr$.
Let  $J:[0,T] \rightarrow  \shl_1(H)$ and $j:  [0,T] \rightarrow 
H \hat \otimes_\pi H$
such that for every $r \in [0,T], J(r) = T_{j(r)}$.   

If 
% $\int_0^T {\mathrm Tr} \left (J(r) \dot G(r) \right)  dr < \infty$,
$\int_0^T  \Vert J(r) \dot G(r) \Vert_{\shl_1(H)}  dr < \infty$,
 then 
 $$ 
\int_{0}^{t}
{}_{(H\hat{\otimes}_{\pi}H)^{\ast}} \langle \dot g(r),  j(r) 
\rangle_{H\hat{\otimes}_{\pi} H} dr
 = \int_{0}^{t}
  {\mathrm Tr} \left (\dot G(r) J(r) \right)  dr < \infty. $$
\end{lem}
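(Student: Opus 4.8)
The plan is to reduce the statement to a pointwise-in-$r$ application of Proposition \ref{prop:26}, followed by the cyclicity of the trace and the elementary trace estimate \eqref{eq:trace-h1}. First I would fix $r \in [0,T]$ and invoke Proposition \ref{prop:26} with $u = j(r) \in H\hat\otimes_\pi H$, whose associated nuclear operator is $T_{j(r)} = J(r)$, and with $\varphi = \dot g(r) \in (H\hat\otimes_\pi H)^\ast$, whose associated bounded operator is $L_{\dot g(r)} = \dot G(r)$. This gives, for every $r$,
\begin{equation*}
{}_{(H\hat{\otimes}_{\pi}H)^{\ast}} \langle \dot g(r), j(r)\rangle_{H\hat{\otimes}_{\pi} H} = \mathrm{Tr}\left( J(r)\,\dot G(r)\right).
\end{equation*}

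Next I would swap the two factors inside the trace. Since $J(r) \in \shl_1(H)$ is nuclear and $\dot G(r) \in \shl(H)$ is bounded, both $J(r)\dot G(r)$ and $\dot G(r)J(r)$ belong to the two-sided ideal $\shl_1(H)$, and the trace is cyclic on such products, so $\mathrm{Tr}(J(r)\dot G(r)) = \mathrm{Tr}(\dot G(r)J(r))$; this cyclicity for a nuclear times a bounded operator is standard (see \cite{Ryan02}, Section 2.6, or \cite{DaPratoZabczyk92}, Appendix C). Combining it with the previous display yields the pointwise integrand identity $\langle \dot g(r), j(r)\rangle = \mathrm{Tr}(\dot G(r)J(r))$.

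It then remains to settle integrability and integrate. By the first inequality of \eqref{eq:trace-h1} applied to the nuclear operator $J(r)\dot G(r)$, we have $|\mathrm{Tr}(\dot G(r)J(r))| = |\mathrm{Tr}(J(r)\dot G(r))| \le \|J(r)\dot G(r)\|_{\shl_1(H)}$, so the hypothesis $\int_0^T \|J(r)\dot G(r)\|_{\shl_1(H)}\, dr < \infty$ makes the common integrand absolutely integrable on $[0,T]$. Integrating the pointwise identity over $[0,t]$ gives
\begin{equation*}
\int_{0}^{t} {}_{(H\hat{\otimes}_{\pi}H)^{\ast}} \langle \dot g(r), j(r)\rangle_{H\hat{\otimes}_{\pi} H}\, dr = \int_{0}^{t} \mathrm{Tr}\left( \dot G(r)\,J(r)\right) dr < \infty,
\end{equation*}
which is the assertion.

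The proof is essentially a pointwise identity integrated term by term, so the only genuinely delicate point is measurability: one must know that $r \mapsto \mathrm{Tr}(\dot G(r)J(r))$ is a measurable function so that both integrals are meaningful. I would deduce this from the strong (Bochner) measurability of $\dot G$ — equivalently of $\dot g$, since $\varphi \mapsto L_\varphi$ is the isometric isomorphism described by \eqref{eq:expressionLB} between $(H\hat\otimes_\pi H)^\ast$ and $\shl(H)$ — together with the measurability of $r\mapsto J(r)$ (implicit in the finiteness hypothesis, and available in the application of Lemma \ref{L49}), using that multiplication $\shl_1(H)\times\shl(H)\to\shl_1(H)$ and the trace functional on $\shl_1(H)$ are continuous. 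Everything else is a direct consequence of Proposition \ref{prop:26} and \eqref{eq:trace-h1}.
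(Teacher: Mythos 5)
Your proof is correct, but it is organized differently from the paper's. The paper's (very terse) argument first establishes the identity for step functions $j$ (resp.\ $J$) and then passes to the limit, i.e.\ it is a density/approximation argument at the level of the integrals. You instead apply Proposition \ref{prop:26} pointwise in $r$ with $u=j(r)$, $\varphi=\dot g(r)$, obtaining $\langle \dot g(r), j(r)\rangle=\mathrm{Tr}(J(r)\dot G(r))$ for every $r$, then use cyclicity of the trace on the ideal $\shl_1(H)$ and the bound \eqref{eq:trace-h1} to dominate the common integrand by $\Vert J(r)\dot G(r)\Vert_{\shl_1(H)}$, and finally integrate. This is arguably more economical: since Proposition \ref{prop:26} already encapsulates the pointwise duality--trace identity, no approximation by step functions or limit passage in the pairing is needed. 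What the step-function route buys in exchange is that measurability of $r\mapsto\mathrm{Tr}(\dot G(r)J(r))$ comes for free as a pointwise limit of measurable functions, whereas in your argument you must (and correctly do) address it separately via the strong measurability of $\dot G$ and $J$ and the continuity of the multiplication $\shl_1(H)\times\shl(H)\to\shl_1(H)$ and of the trace functional. Both routes reach the same conclusion; yours is a legitimate and in fact cleaner alternative.
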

\begin{proof} The proof follows first showing the equality for step functions
 $j$ (resp. $J$), and then passing to  the limit.
\end{proof}

\section{Notion of $\chi$-covariation}

\label{SecChi6}

\subsection{Basic definitions}

We introduce now a more general notion of covariation 
(and quadratic variation) than the
ones discussed before, which are essentially only suitable for
semimartingale processes.
The basic concepts were introduced in \cite{DGR, DGR1, DGR2}. 
The notion of  $\chi$-quadratic variation and 
 $\chi$-covariation
 is based on the notion of 
Chi-subspace.
Let $B, B_1, B_2$ be  separable Banach spaces.

\medskip 
\begin{dfn}\label{DefChi}
A Banach subspace $\chi$  continuously injected into  
$(B_1\hat{\otimes}_{\pi}B_2)^{\ast}$ will be called 
 {\bf  Chi-subspace} (of $(B_1\hat{\otimes}_{\pi}B_2)^{\ast}$).
In particular it holds 
\begin{equation} \label{EChiSpace}
\|\cdot\|_{\chi}\geq \|\cdot\|_{ (B_1\hat{\otimes}_{\pi}B_2)^{\ast}.} 
\end{equation}
\end{dfn}
Typical examples  of Chi-subspaces are the following.
\begin{enumerate}
\item Let  $\nu_1$ (resp. $\nu_2$)  be a dense subspace of $B_1^*$
(resp. $B_2^*$)
 then 
a typical Chi-subspace (of $(B_1 \hat \otimes_\pi B_2)^\ast$) 
 is  the topological projective tensor product
of $\nu_1  $ with    $\nu_2  $, denoted by  $\nu_1 \hat \otimes_\pi \nu_2$.
This is naturally embedded in  $(B_1\hat{\otimes}_{\pi}B_2)^{\ast}$ as recalled in Lemma \ref{lm:aggiunto}.
\item In particular, if $\nu_0$ is dense subspace of 
$B^\ast$, then $\chi := \nu_0 \hat \otimes_\pi \R$ is a Chi-subspace of 
$(B \hat \otimes_\pi \R)^\ast$, which can be naturally identified with
$B^\ast$. By a slight abuse of notations 
one could say that $\nu_0$ is a Chi-subspace of $B^\ast$.
\item Let $B$ be a separable Hilbert space $H$  and $A$  a generator 
of a $C_0$-semigroup on $H$, see \cite{EngelNagel99} and
  \cite{pa} Chapter 1 for a complete treatment of the subject.
 denote by $D(A)$ and $D(A^*)$ respectively the domains of $A$ and 
$A^*$ endowed with the graph norm, see again \cite{pa} Chapter 1
  or \cite{EngelNagel99} Chapter II.
Then a typical Chi-subspace of $(H \hat \otimes_\pi H)^\ast$ can 
be obtained setting 
$\chi := \nu_0 \hat \otimes_\pi \nu_0 $ and
$\nu_0 = D(A^{\ast})$ endowed with its the graph norm.
\item If $B = C([-\tau, 0])$, then 
$\chi$ could be the space $\shm([-\tau,0]^2)$
of finite signed measures on  $[-\tau,0]^2$.
Other examples of $\chi$-subspaces are given in Section \ref{window}.
\item It is not difficult to see that a direct sum of Chi-subspaces is a 
 Chi-subspace. This produces further examples of Chi-subspaces,
see Proposition 3.16 of \cite{DGR1}.
\end{enumerate}

Let $\X$ be a $B_1$-valued and $\Y$ be a $B_2$-valued process.
We suppose $\X$ to be continuous. Let $\chi$
be a Chi-subspace of $(B_1 \hat \otimes B_2)^\ast$.
We denote by $\shc([0,T])$ the space of real continuous processes equipped
 with the ucp topology. 
If $\varepsilon > 0$, we denote by $[\X, \Y]^{\epsilon}$ be the
 application
$$
[\X, \Y]^{\epsilon}: \chi\longrightarrow \shc([0,T])
$$ 
defined by
$$
\varphi
\mapsto
\left( \int_{0}^{t} 
%\prescript{}
{}_{\chi}{\langle} \varphi,
\frac{
J\left( 
 \left(\mathbb{X}_{r+\epsilon}-\mathbb{X}_{r}\right)\otimes
   \left(\mathbb{Y}_{r+\epsilon}-\mathbb{Y}_{r}\right)
\right)
}{\epsilon} 
\rangle_{\chi^{\ast}} \,dr 
\right)_{t\in [0,T]},$$
where $J:B_1 \hat{\otimes}_{\pi} B_2 \rightarrow
(B_1  \hat{\otimes}_{\pi}B_2)^{\ast\ast}$ is the canonical injection between a
 Banach space and its bidual (omitted  in the sequel). 

\begin{dfn}  \label{DChiCov}			
($\mathbb{X},\mathbb{Y})$
 {\bf admits a} \textbf{$\chi$-covariation} if the following holds. 
\begin{itemize}
\item[(H1)] For all $(\epsilon_{n})\downarrow 0$ it exists a subsequence 
$(\epsilon_{n_{k}})$ such that 
\[
\sup_{k}  \int_{0}^{T}  \frac{ \left\|  (\mathbb{X}_{r+\epsilon_{n_{k}}}-\mathbb{X}_{r})\otimes 
(\mathbb{Y}_{r+\epsilon_{n_{k}}}-\mathbb{Y}_{r})
  \right\|_{\chi^{\ast}}}
{\epsilon_{n_{k} } }dr \quad < \infty  \quad a.s.
\]
\item[(H2)] There exists a process, denoted by $[\mathbb{X}, \mathbb{Y}]^\chi:
 \chi \longrightarrow
  \shc([0,T])$ such that 
\[
[\X, \Y]^{\epsilon}(\varphi)\xrightarrow[\epsilon\rightarrow 0]{ucp} 
[\X, \Y](\varphi), \quad \forall \; \varphi\in \chi. 
\] 
%\end{itemize}

\item [(H3)]
There is a $\chi^{\ast}$-valued bounded variation process $\widetilde{[\X, \Y]^\chi}: [0,T] \times \Omega \rightarrow \chi^\ast$, such that
$\widetilde{[\X, \Y]^\chi}_{t}(\phi)=[\X, \Y]^\chi(\phi)_{t}, \forall t \in [0,T]$ a.s.
 for all
 $\phi\in \chi$.\\ 
\end{itemize}
\end{dfn}
\begin{dfn} \label{D53}
 If $B = B_1 = B_2$, and $\X = \Y$, we say that $\X$ has a
 $\chi$-{\bf quadratic 
variation}, if 
$(\X,\X)$ admits a  $\chi$-covariation.
\end{dfn}

\begin{dfn} \label{D54}
When $(\mathbb{X},\mathbb{Y})$  admits  a $\chi$-covariation, the 
 $\chi^{\ast}$-valued process
 $\widetilde{[\mathbb{X}, \mathbb{Y}]}$   (which is indeed
a modification of $[\mathbb{X}, \mathbb{Y}]$)
%(and even the application $[\mathbb{X}]$)
 will be called $\chi$-{\bf covariation} of $(\mathbb{X},\Y)$.
If    $\mathbb{X}$ admits a quadratic variation, the  $\chi^{\ast}$-valued 
process $\widetilde{[\mathbb{X},\X]}$, also denoted by
 $\widetilde{[\mathbb{X}]}$,
 is called  $\chi$-{\bf quadratic variation} 
of $\X$.
\end{dfn}

 \begin{rem} \label{R55}
\begin{enumerate}
\item $\widetilde{[\mathbb{X}]^\chi}$ will be the quadratic variation 
intervening in the second order derivative term of
 It\^o's formula stated in Theorem \ref{TITOF}, which will make
formula \eqref{FITOAtt} rigorous.
\item
For every fixed $\phi\in \chi$, the real processes 
$((\widetilde{[\X,\Y]^\chi})(\phi), t \in [0,T])$ and
 $([\X,\Y]^\chi(\phi)_{t}, t \in [0,T]), $
 are indistinguishable. 
\item
The $\chi^{\ast}$-valued process $\widetilde{[\X,\Y]}$ is weakly star 
continuous, i.e. 
$\widetilde{[\X]}(\phi)$ is continuous for every fixed $\phi\in \chi$,
 see \cite{DGR1} Remark 3.10 1.
\end{enumerate}
\end{rem}

A particular situation arises when 
 $\chi = (B_1 \hat{\otimes}_{\pi} B_2)^{\ast}$.
 
%{\bf Global quadratic variation concept}
\begin{dfn} \label{D56}
\begin{itemize}
\item We say that $(\mathbb{X},\mathbb{Y})$  admits a 
\textbf{ global covariation}
  if it admits a $\chi$-covariation with
 $\chi=(B_1 \hat{\otimes}_{\pi} B_2)^{\ast}$.
In this case we will omit the mention $\chi$ 
in  $\widetilde{[\mathbb{X},\Y]^\chi}$ and
$[\mathbb{X},\Y]^\chi$.
\item The modification
 $\widetilde{[\mathbb{X},\X]}$, which is a 
$(B\hat{\otimes}_{\pi} B)^{\ast\ast}$-valued process is also called 
{\bf global quadratic variation} of $\X$.

\end{itemize}
\end{dfn}
\begin{rem}\label{CClassGlobal}
The following statements are easy to establish, see Remarks
4.8 and 4.10 of \cite{RusFab}.
\begin{enumerate} 
\item If $\X$ has zero scalar quadratic variation 
then $\X$ has a zero tensor  quadratic variation and 
 $\X$ has a zero global quadratic variation.
\item If  $\X$ and $\Y$ have a scalar quadratic variation and 
$(\X,\Y)$ has  a tensor covariation,
then $(\X,\Y)$ admit a global covariation.
Moreover $\widetilde {[\X, \Y]} = [\X, \Y]^\otimes$,
where the equality holds in $B_1 \hat \otimes_\pi B_2$.
\item If $(\X,\Y)$ admits a global covariation, then
 it admits a $\chi$-covariation for every Chi-subspace $\chi$.
Moreover  $ \widetilde {[\X,\Y]_t^\chi}(\varphi) =  \widetilde{[\X,\Y]_t}(\varphi)$,
for every $t \in [0,T], \varphi \in \chi$.
\end{enumerate}
\end{rem}
\begin{prop} \label{PClassGlobal}
Let $\X^i = \M^i + \V^i, i =1,2$ be two semimartingales
with values in $B_i$.
Let $\chi$ any Chi-subspace of $(B_1 \hat \otimes_\pi B_2)^\ast$.
Then $(\X^1, \X^2)$ admits a $\chi$-covariation and \\
$\widetilde {[\X^1,\X^2]^\chi}(\varphi) =  {}_{H \hat \otimes_\pi H} \langle 
[\M^1,\M^2]^{\otimes}, \varphi \rangle_{(H \hat \otimes_\pi H)^\ast}, \ \forall \varphi \in \chi$.
\end{prop}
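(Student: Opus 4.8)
The plan is to reduce the covariation of the semimartingale pair to that of its martingale parts by bilinearity, and then to climb the ladder tensor covariation $\to$ global covariation $\to$ $\chi$-covariation using the facts already recorded in Remark \ref{CClassGlobal}; this route has the advantage of never requiring a direct verification of (H1)--(H3).

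First I would expand each increment as $\X^i_{r+\epsilon}-\X^i_r = (\M^i_{r+\epsilon}-\M^i_r)+(\V^i_{r+\epsilon}-\V^i_r)$ and develop the tensor product $(\X^1_{r+\epsilon}-\X^1_r)\otimes(\X^2_{r+\epsilon}-\X^2_r)$ bilinearly into four terms. Integrating against $dr/\epsilon$ as in Definition \ref{def:tensorcovariation}, the $B_1\hat\otimes_\pi B_2$-valued approximating process $[\X^1,\X^2]^{\otimes,\epsilon}$ splits accordingly, so that, since ucp limits are additive, it is enough to analyse the four summands separately.

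Next I would dispose of each piece using the earlier results. By Remark \ref{R7.7}, item 2, each bounded variation process $\V^i$ has zero scalar quadratic variation, whereas each $\M^i$ has a scalar quadratic variation by Remark \ref{R7.7}, item 3 (equivalently Proposition \ref{PDZTens}, item 3). Hence, by Remark \ref{R7.7}, item 1, the three mixed contributions $[\M^1,\V^2]^\otimes$, $[\V^1,\M^2]^\otimes$ and $[\V^1,\V^2]^\otimes$ exist and vanish, while $[\M^1,\M^2]^\otimes$ exists by Proposition \ref{PDZTens}, item 3. Summing, $(\X^1,\X^2)$ admits a tensor covariation with $[\X^1,\X^2]^\otimes=[\M^1,\M^2]^\otimes$. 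Moreover, again by Remark \ref{R7.7}, item 1, each $\X^i=\M^i+\V^i$ inherits the scalar quadratic variation of $\M^i$, so both $\X^1$ and $\X^2$ have one.

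To conclude I would invoke Remark \ref{CClassGlobal}: item 2 turns the scalar quadratic variations together with the tensor covariation into a global covariation, giving $\widetilde{[\X^1,\X^2]}=[\X^1,\X^2]^\otimes=[\M^1,\M^2]^\otimes$ in $B_1\hat\otimes_\pi B_2$, and item 3 then transports the global covariation to an arbitrary Chi-subspace $\chi$, yielding, for every $\varphi\in\chi$,
\[
\widetilde{[\X^1,\X^2]^\chi}(\varphi)=\widetilde{[\X^1,\X^2]}(\varphi)={}_{B_1\hat\otimes_\pi B_2}\langle [\M^1,\M^2]^\otimes,\varphi\rangle_{(B_1\hat\otimes_\pi B_2)^*},
\]
which is the asserted identity. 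The only point requiring genuine care, rather than a mere citation, is the additivity step: one must check that the ucp limit of the four-term sum equals the sum of the four ucp limits. This is legitimate precisely because each summand is shown to converge on its own, so the real work is organisational, namely keeping the bilinear expansion and the vanishing of the three bounded-variation cross terms straight.
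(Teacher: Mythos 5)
Your proposal is correct and follows essentially the same route as the paper's own proof: reduce to the martingale parts via bilinearity and the vanishing of the bounded-variation cross terms (Remark \ref{R7.7}), obtain the tensor covariation $[\X^1,\X^2]^\otimes=[\M^1,\M^2]^\otimes$ and the scalar quadratic variations, then pass to the global covariation and restrict to $\chi$ via items 2 and 3 of Remark \ref{CClassGlobal}. The only difference is that you spell out the four-term expansion and the additivity of ucp limits, which the paper compresses into a citation of the linearity of the tensor covariation.
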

\begin{proof} \
By item 2. of Remark \ref{R7.7}, $\V$ has a zero scalar quadratic variation.
By Proposition \ref{PDZTens} 3. $(\M_1, \M_2)$ admits a tensor
covariation. By item 1. of Remark \ref{R7.7}
and by the linearity of  tensor covariation 
it follows that  $[\X^1,\X^2]^\otimes = [\M^1,\M^2]^\otimes$.
Again by point 1. of   Remark \ref{R7.7}, $\X^1$ and $\X^2$ have a scalar
 quadratic variation. 
Again by Remark  \ref{CClassGlobal}  2., $(\X^1,\X^2)$ admits a global
 quadratic variation and so the result follows by Remark  \ref{CClassGlobal} 3.
%
%\ref{CClassGlobal}.
\end{proof}

 Indeed  the notion of global covariation is closely related to the 
weak-$\ast$  convergence 
in $(B\hat{\otimes}_{\pi} B)^{\ast\ast}$.
If the probability space $\Omega$ were a singleton,
i.e. in the deterministic case,
if $\X$  admits a $\chi$-quadratic variation then 
  $$[\X, \X]^{\epsilon}_t \xrightarrow 
[\epsilon\rightarrow 0]{w^{\ast}}\widetilde{[\X, \X]_t}, 
\forall t \in [0,T]. $$

As we mentioned, the notion of weak Dirichlet process 
admits a generalization to the Banach space case.
 \begin{dfn}
 \label{def:nu-dir}
Let $\V, \X$ be two $B$-valued continuous processes and $\nu_0$ be a dense subspace of $B^\ast$.
We set $\nu = \nu_0 \otimes \R$.
\begin{enumerate}
\item $\V$ is said $({\mathscr F}_t)-\nu$-{\bf martingale orthogonal process} if for any real $({\mathscr F}_t)$-local martingale $N$
we have $[\V, N]^\nu = 0$
\item $\X$ is said  $({\mathscr F}_t)-\nu$-{\bf weak Dirichlet} if it is the sum of an 
a $({\mathscr F}_t)$-local martingale $\M$ and an  $({\mathscr F}_t)-\nu$-martingale orthogonal process.
\end{enumerate}
\end{dfn}
\begin{rem} \label{R510}
\begin{enumerate}
\item If $B = \R$, then any  $({\mathscr F}_t)-\nu$-weak Dirichlet (resp.  $({\mathscr F}_t)-\nu$-martingale orthogonal) 
process is a real $({\mathscr F}_t)$-weak Dirichlet (resp. $({\mathscr F}_t)$-martingale orthogonal) process.
\item The notions introduced in Definition \ref{def:nu-dir} 
%of Dirichlet, weak Dirichlet, $\nu$-weak Dirichlet process
depend on an underlying filtration $({\mathscr F}_t)$. When not necessary
it will be omitted. We will speak about 
Dirichlet  (resp. weak Dirichlet,  $\nu$-weak Dirichlet process)
 instead of  $({\mathscr F}_t)$-Dirichlet (resp. $({\mathscr F}_t)$-weak Dirichlet, 
 $({\mathscr F}_t)$-$\nu$-weak
 Dirichlet process).
\end{enumerate}
\end{rem}
\begin{rem} \label{R511} 
Let $H$ be a separable Hilbert space and $\nu_0$ be a Banach space continuously
embedded in $H^\ast$. We set $\chi = \nu_0 \hat \otimes_\pi \nu_0, \
\nu = \nu_0 \otimes \R$. A zero $\chi$-quadratic variation process
is a $\nu$-weak orthogonal process.
This was the object of Proposition 4.29 in \cite{RusFab}.
\end{rem}

We introduce below the  useful notion of $\bar \nu_0$-semimartingale.
\begin{dfn} \label{nu-semi} 
Let $(\S_t, t \in [0,T])$ be an $H$-valued progressively measurable process
and a Banach space $\bar \nu_0 $  in which $H$ is continuously embedded.
$\S$ is said $\bar \nu_0$-{\bf semimartingale} (or more precisely 
$\bar \nu_0-({\mathscr F}_t)$-semimartingale)
if it is the sum  of a local martingale $\M$ and a process $\A$ 
which  finite variation as $\bar \nu_0$-valued process. 
\end{dfn}

\begin{prop} \label{P65}
\begin{enumerate}
\item An $H$-valued $\bar \nu_0$-semimartingale is a 
semimartingale as $\bar \nu_0$-valued process. 
\item The decomposition of a $\bar \nu_0$-semimartingale is unique, if for instance we prescribe
that $\A_0 = 0$ a.s.
\end{enumerate}
\end{prop}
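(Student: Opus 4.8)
The plan is to handle the two assertions separately, exploiting the continuous injection $j\colon H\hookrightarrow\bar\nu_0$, and to reduce the uniqueness to the real-valued case by testing against functionals. Throughout I use the scalar characterization of $B$-valued conditional expectation recalled in Subsection \ref{SGPF}, namely $\E(\langle\phi,C\rangle\mid\mathscr F_t)=\langle\phi,\E(C\mid\mathscr F_t)\rangle$ for $\phi\in B^\ast$.

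For item 1, write $\S=\M+\A$ as in Definition \ref{nu-semi}, with $\M$ an $H$-valued local martingale and $\A$ of finite variation as a $\bar\nu_0$-valued process. Since $\A$ is already a $\bar\nu_0$-valued bounded variation process, it suffices to check that $j(\M)$ is a $\bar\nu_0$-valued local martingale. First I would verify that conditional expectation commutes with the bounded operator $j$: for any $\phi\in\bar\nu_0^\ast$ one has $\langle\phi,\E(j(\M_s)\mid\mathscr F_t)\rangle=\E(\langle j^\ast\phi,\M_s\rangle\mid\mathscr F_t)=\langle j^\ast\phi,\E(\M_s\mid\mathscr F_t)\rangle=\langle\phi,j(\M_t)\rangle$, whence $\E(j(\M_s)\mid\mathscr F_t)=j(\M_t)$; integrability is preserved since $\E|j(\M_t)|_{\bar\nu_0}\le\|j\|\,\E|\M_t|_H$. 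The same localizing sequence of stopping times that reduces $\M$ to a genuine martingale works for $j(\M)$, so $j(\M)$ is a $\bar\nu_0$-valued local martingale and $\S=j(\M)+\A$ exhibits $\S$ as a $\bar\nu_0$-valued semimartingale.

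For item 2, suppose $\S=\M^1+\A^1=\M^2+\A^2$ are two such decompositions with $\A^i_0=0$; then $\M^1_0=\M^2_0=\S_0$, and setting $\M:=\M^1-\M^2$, $\A:=\A^2-\A^1$ one has $j(\M)=\A$ with $\M$ an $H$-valued continuous local martingale vanishing at $0$ and $\A$ a $\bar\nu_0$-valued bounded variation process. The key step is to show such a process is identically zero. Fix $\phi\in\bar\nu_0^\ast$: the real process $\langle\phi,j(\M)\rangle=\langle j^\ast\phi,\M\rangle_H$ (Riesz-identifying $j^\ast\phi\in H^\ast$ with an element of $H$) is on one hand a real continuous local martingale, since applying a fixed continuous functional preserves the local martingale property by the same commuting argument, and on the other hand it has bounded variation, being dominated by $\|\phi\|_{\bar\nu_0^\ast}$ times the total variation of $\A=j(\M)$. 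A real continuous local martingale of bounded variation starting at $0$ has zero quadratic variation by items 1 and 3 of Proposition \ref{Pproperties}, hence is a.s.\ constant and therefore vanishes, so $\langle\phi,j(\M_t)\rangle=0$ for all $t$, a.s.

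Finally I would upgrade ``for each $\phi$, a.s.'' to ``a.s., simultaneously for all $\phi$'' using separability of $\bar\nu_0$: choosing a countable family $(\phi_n)\subset\bar\nu_0^\ast$ separating the points of $\bar\nu_0$, a single null set handles all $n$ and all rational $t$, and weak-star continuity of the trajectories extends this to every $t$, giving $j(\M_t)=0$ for all $t$ a.s.; injectivity of $j$ then yields $\M\equiv 0$, i.e.\ $\M^1=\M^2$ and consequently $\A^1=\A^2$. The only delicate point is precisely this last measurability reduction — ensuring the exceptional null set can be chosen independently of $\phi$ — which is exactly where separability of $\bar\nu_0$ enters.
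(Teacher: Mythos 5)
Your proof is correct. For item 1 you argue exactly as the paper does (the paper compresses it to ``an $H$-valued martingale is clearly a $\bar\nu_0$-valued martingale, then localize''); your explicit verification that conditional expectation commutes with the continuous injection is just the content of that ``clearly''. For item 2 the routes diverge slightly: the paper disposes of uniqueness in one line by invoking the known uniqueness of the decomposition of a semimartingale with values in the Banach space $\bar\nu_0$ (made applicable by item 1), whereas you prove that uniqueness from scratch by scalarization — testing $j(\M^1-\M^2)=\A^2-\A^1$ against functionals $\phi\in\bar\nu_0^\ast$, observing that each scalar process is simultaneously a continuous local martingale and of bounded variation, hence vanishes by items 1 and 3 of Proposition \ref{Pproperties}, and then removing the dependence of the null set on $\phi$ via a countable separating family. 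This is in fact how the black-box result is proved, so your argument is a self-contained substitute for the citation; its added value is that it makes visible the two hypotheses the one-line proof silently relies on, namely continuity of the local martingale parts (assumed throughout the paper) and separability of $\bar\nu_0$ (needed for the countable separating family, and satisfied in the paper's applications where $\bar\nu_0=D(A^\ast)^\ast$). No gaps.
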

\begin{proof} \
\begin{enumerate}
\item Indeed an $H$-valued martingale is clearly a $\bar \nu_0$-valued 
martingale and consequently, by stopping arguments, an  $H$-valued local martingale 
is a $\bar \nu_0$-semimartingale.
\item It follows by the decomposition of a semimartingale taking values in $\bar \nu_0$.
\end{enumerate}
\end{proof}
The uniqueness  of the decomposition of a $\bar \nu_0$-semimartingale allows to define an extension
of It\^o integral, that will still denoted in the same way.
\begin{dfn} \label{D66}
Let $H, E$ be separable Hilbert spaces. Let $\bar \nu_0$ be a Banach space in which $H$ is continuously
injected and $\S = \M + \A$ be a  $H$-valued which is a
$\bar \nu_0$-semimartingale.
Suppose that $(\Y_t)$ is a progressively measurable, such that 
\begin{equation} \label{Etech1}
\int_0^T  \Vert \Y_r \Vert_{\shl(H,E)}^2 d [\M]^{\R, cl}_r    + 
\int_0^T  \Vert \Y_r \Vert_{\shl(\bar \nu_0,E)} d\Vert A \Vert_r
  < \infty,
\end{equation}
where $r \mapsto \Vert \A(r) \Vert$ is the total variation function
 of $r \mapsto \A(r)$.
We denote by $\int_0^t \Y_s d\S_s := \int_0^t \Y_s d\M_s +
  \int_0^t \Y_s d\A_s, t \in [0,T]$. 
\end{dfn}

\begin{prop}\label{P67} Let $H$ be a separable Hilbert space, 
continuously embedded
in a Banach space $\bar \nu_0$. Let $\S = \M + \A $ 
be an $H$-valued process which is a $\bar \nu_0$-semimartingale.
We set $\nu_0 =  \bar \nu_0^\ast$. We set $\chi = \nu_0 \hat \otimes_\pi \nu_0$.
\begin{enumerate}
\item $\A$ admits a zero $\chi$-quadratic variation.
\item $[\M,\A]^\chi = 0$.
\item $\S$ is a $\nu_0 \hat \otimes_\pi \R $-weak Dirichlet process.
\item  $\S$ has a 
%$\nu_0 \hat \otimes_\pi \nu_0$
$\chi$-quadratic variation.
Moreover $\widetilde{[\S, \S]}^\chi (\varphi) = 
{}_{(H \hat \otimes_\pi H)^\ast} \langle \varphi, [\M,\M]^\otimes 
\rangle_{H \hat \otimes_\pi H}$, $\forall \varphi \in  (H \hat \otimes_\pi H)^\ast$.
\end{enumerate}

\end{prop}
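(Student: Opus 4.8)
The plan is to reduce every item to scalar quadratic variation estimates, through the isometric identity
$$\|J(a\otimes b)\|_{\chi^\ast} = |a|_{\bar\nu_0}\,|b|_{\bar\nu_0}, \qquad a,b\in H\subseteq\bar\nu_0,$$
and then to assemble items 3 and 4 from the results already at our disposal. First I would record this norm computation. Since $\nu_0=\bar\nu_0^\ast$, the space $\chi=\nu_0\hat\otimes_\pi\nu_0$ is a Chi-subspace of $(H\hat\otimes_\pi H)^\ast$ by Lemma \ref{lm:aggiunto}, and $\chi^\ast$ identifies with the bounded bilinear forms on $\nu_0\times\nu_0$. For $a,b\in H$ the functional $J(a\otimes b)$ restricted to $\chi$ acts on $\varphi=\sum_i f_i\otimes g_i$ by $\sum_i\langle f_i,a\rangle\langle g_i,b\rangle$, the pairings being compatible with those of $\bar\nu_0^\ast,\bar\nu_0$ because $H\hookrightarrow\bar\nu_0$ and $\nu_0\hookrightarrow H^\ast$ continuously. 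Taking the supremum over $\|\varphi\|_\chi\le1$ and using that the canonical embedding $\bar\nu_0\hookrightarrow\bar\nu_0^{\ast\ast}$ is isometric yields the identity above.

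With this in hand, item 1 is immediate: taking $a=b=\A_{r+\epsilon}-\A_r$, the integrand in the defining expression of the $\chi$-quadratic variation of $\A$ has $\chi^\ast$-norm exactly $|\A_{r+\epsilon}-\A_r|_{\bar\nu_0}^2$, so (H1)--(H2) for $(\A,\A)$ literally coincide with the scalar quadratic variation of $\A$ computed with the $\bar\nu_0$-norm. As $\A$ is a bounded variation process as a $\bar\nu_0$-valued process, item 2 of Remark \ref{R7.7} forces this scalar quadratic variation to be zero; hence $[\A,\A]^\chi=0$, condition (H3) being trivial for the null process. For item 2 I would take $a=\M_{r+\epsilon}-\M_r$ and $b=\A_{r+\epsilon}-\A_r$ and bound $\frac1\epsilon\int_0^T|\M_{r+\epsilon}-\M_r|_{\bar\nu_0}\,|\A_{r+\epsilon}-\A_r|_{\bar\nu_0}\,dr$ by Cauchy--Schwarz with the product of the two scalar quadratic variations. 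The factor relative to $\M$ is finite because $\M$ is an $H$-valued local martingale (item 3 of Remark \ref{R7.7}) and $H\hookrightarrow\bar\nu_0$ continuously, while the factor relative to $\A$ tends to zero by item 1; thus $[\M,\A]^\chi=[\A,\M]^\chi=0$.

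Item 3 then follows with essentially no extra work: item 1 says $\A$ has zero $\chi$-quadratic variation, so by Remark \ref{R511} (Proposition 4.29 of \cite{RusFab}) $\A$ is an $({\mathscr F}_t)$-$\nu$-martingale orthogonal process with $\nu=\nu_0\otimes\R$; since $\M$ is an $H$-valued local martingale, $\S=\M+\A$ is a $\nu$-weak Dirichlet process in the sense of Definition \ref{def:nu-dir}. For item 4 I would first apply Proposition \ref{PClassGlobal} with $\X^1=\X^2=\M$ and vanishing bounded variation parts: this gives that $(\M,\M)$ admits a $\chi$-covariation with $\widetilde{[\M,\M]^\chi}(\varphi)={}_{(H\hat\otimes_\pi H)^\ast}\langle\varphi,[\M,\M]^\otimes\rangle_{H\hat\otimes_\pi H}$ for $\varphi\in\chi$. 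Expanding $(\S_{r+\epsilon}-\S_r)\otimes(\S_{r+\epsilon}-\S_r)$ into the four tensor contributions of $\M$ and $\A$ and using the bilinearity of $[\cdot,\cdot]^\epsilon$, the convergence (H2) for $(\S,\S)$ reduces to the convergences already established, the three terms involving $\A$ vanishing by items 1 and 2; condition (H1) is checked through $|\S_{r+\epsilon}-\S_r|_{\bar\nu_0}^2\le 2|\M_{r+\epsilon}-\M_r|_{\bar\nu_0}^2+2|\A_{r+\epsilon}-\A_r|_{\bar\nu_0}^2$ together with the scalar quadratic variation bounds, and (H3) is inherited from $\widetilde{[\M,\M]^\chi}$. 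Hence $\S$ has a $\chi$-quadratic variation equal to that of $\M$, which is the claimed formula.

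The main obstacle I anticipate is the clean justification of the identity $\|J(a\otimes b)\|_{\chi^\ast}=|a|_{\bar\nu_0}|b|_{\bar\nu_0}$ and, relatedly, the careful bookkeeping that all dual pairings (those of $H^\ast,H$ and of $\bar\nu_0^\ast,\bar\nu_0$) are compatible under the embeddings $H\hookrightarrow\bar\nu_0$ and $\nu_0\hookrightarrow H^\ast$; once this is secured, each item collapses to a scalar quadratic variation estimate and to invoking Remark \ref{R511} and Proposition \ref{PClassGlobal}.
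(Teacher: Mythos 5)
Your proposal is correct and follows essentially the same route as the paper: identify $\chi^\ast$ with bilinear forms on $\nu_0\times\nu_0$ so that the $\chi^\ast$-norm of the tensor increments is controlled by products of $\bar\nu_0$-norms, kill the $\A$-terms using its bounded variation in $\bar\nu_0$ (and Cauchy--Schwarz against the scalar quadratic variation of $\M$ for the cross term), deduce item 3 from Remark \ref{R511}, and obtain item 4 by bilinearity together with Proposition \ref{PClassGlobal} applied to $\M$. The only cosmetic difference is that you assert the exact isometry $\|J(a\otimes b)\|_{\chi^\ast}=|a|_{\bar\nu_0}|b|_{\bar\nu_0}$, whereas the paper only needs (and only proves) the upper bound.
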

\begin{proof} 
\begin{enumerate}
\item Observe that, thanks to Lemma 3.18 in \cite{DGR1},
 it will be enough
 to show that 
\begin{equation} \label{E111}
I(\epsilon):= \frac{1}{\epsilon}\int_0^T 
|(\A(r+\epsilon) - \A(r))\otimes^2|_{\chi^*}
  dr \xrightarrow{\epsilon \to 0}0, \qquad \text{in probability}.
\end{equation}
In fact, identifying $\chi^*$ with the space of bounded
bilinear functions on $\nu_0$, i.e. ${\mathcal Bi}(\nu_0, \nu_0)$,
recalling that $\nu_0 = {\bar \nu}_0^\ast$,
the left-hand side  of \eqref{E111} gives
\begin{eqnarray*}
I(\epsilon) &=& \frac{1}{\epsilon} \int_0^T
 \sup_{|\phi|_{\bar{\nu_0}},\,
 |\psi|_{\bar{\nu_0}} \leq 1} \left | \left\langle (\A(r+\epsilon) - 
\A(r)), \phi \right \rangle \left\langle (\A(r+\epsilon) - \A(r)),
 \psi \right \rangle \right | d r \\
&\le & \frac{1}{\varepsilon} \int_0^T \vert \A(r+\epsilon) - \A(r) \vert^2_{\bar \nu_0} dr.
\end{eqnarray*}
Since $\A$ is an $\bar \nu_0$-valued bounded variation process,
previous quantity converges to zero, by Remark \ref{R7.7} 2.
\item It follows by very close arguments. In particular, an adaptation of Lemma 3.18 of \cite{DGR1}
shows that it will be enough to show that
\begin{equation} \label{E111bis}
J(\epsilon):= \frac{1}{\epsilon}\int_0^T 
|(\A(r+\epsilon) - \A(r))\otimes\M(r+\epsilon) - \M(r))  |_{\chi^*}
  dr \xrightarrow{\epsilon \to 0}0, \qquad \text{in probability}.
\end{equation}
Then we use the fact that $\M$ is a $\bar \nu_0$-valued  local martingale and therefore,
by item 3. of Proposition \ref{PDZTens}, it has a scalar quadratic variation,
as $\bar \nu_0$-valued process.
\item follows by Remark \ref{R511}.
\item 
Indeed the bilinearity of the $\chi$-covariation and items 1. and 2.
imply that $[\S,\S]^\chi = [\M,\M]^\chi$. The result follows then
 by Proposition \ref{PClassGlobal}.
\end{enumerate}

\end{proof}

Below we will state  examples of processes having a
 $\chi$-quadratic variation.

\subsection{Window processes}
\label{window}

Let $B = C([-\tau,0])$, for some $\tau > 0$, $X = (X_t, t\in [0,T])$
be a real process and $\X = (X_t(\cdot), t \in [0,T]),$ the corresponding
{\bf window process}, i.e. such that $X_t(x) = X_{t+x}, x \in [-\tau,0]$.
 We start with some basic examples.
\begin{prop} \label{P516}
If $X$ has H\"older continuous paths with parameter $\gamma > \frac{1}{2}$ then $X(\cdot)$
 has a zero scalar quadratic variation and therefore a global quadratic variation.
\end{prop}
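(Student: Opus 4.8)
The plan is to show directly that the scalar quadratic variation of $\X = X(\cdot)$ vanishes and then to upgrade this to the global quadratic variation via item 1 of Remark \ref{CClassGlobal}. Recall from Definition \ref{def:scalarquadraticvariation} that I must control, for each fixed $t$, the quantity
\[
[\X,\X]^{\epsilon, \R}_t = \int_0^t \frac{|\X_{r+\epsilon} - \X_r|_B^2}{\epsilon}\, dr, \qquad B = C([-\tau,0]),
\]
as $\epsilon \searrow 0$. Since the norm on $B$ is the supremum norm and $\X_r(x) = X_{r+x}$ for $x \in [-\tau,0]$, I would first rewrite $|\X_{r+\epsilon} - \X_r|_B = \sup_{x \in [-\tau,0]} |X_{r+\epsilon+x} - X_{r+x}|$, so that every increment appearing is a time increment of the scalar path $X$ of size exactly $\epsilon$, uniformly in the shift $x$.

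The key estimate is then immediate from the H\"older hypothesis. I fix $\omega$ in the full-measure set on which $t \mapsto X_t(\omega)$ is $\gamma$-H\"older; after the canonical prolongation of $X$ by the constants $X_0$ and $X_T$ outside $[0,T]$, the extended path remains $\gamma$-H\"older on $\R$ with the same H\"older constant $C(\omega) < \infty$ (gluing constant pieces cannot increase the seminorm, since for $s<0<t$ one has $|X_t - X_s| = |X_t - X_0| \le C|t|^\gamma \le C|t-s|^\gamma$). Hence $|X_{r+\epsilon+x} - X_{r+x}| \le C(\omega)\,\epsilon^\gamma$ for all $r,x$, so $|\X_{r+\epsilon} - \X_r|_B \le C(\omega)\,\epsilon^\gamma$ and
\[
\frac{|\X_{r+\epsilon} - \X_r|_B^2}{\epsilon} \le C(\omega)^2\, \epsilon^{2\gamma - 1}.
\]
Integrating over $r \in [0,t] \subset [0,T]$ yields the bound $[\X,\X]^{\epsilon, \R}_t \le T\, C(\omega)^2\, \epsilon^{2\gamma - 1}$, uniform in $t$. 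Because $\gamma > 1/2$, the exponent $2\gamma - 1$ is strictly positive, so the right-hand side tends to $0$ as $\epsilon \to 0$.

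This gives, for almost every $\omega$, that $[\X,\X]^{\epsilon, \R}_t \to 0$ uniformly in $t \in [0,T]$; a fortiori the convergence holds in probability for each $t$, and the limit is the continuous null process, so $\X$ admits a scalar quadratic variation equal to zero. I would then invoke item 1 of Remark \ref{CClassGlobal}, which asserts that a process with zero scalar quadratic variation has a zero tensor and a zero global quadratic variation, to conclude. The only step requiring a little care — and the only place where the hypothesis $\gamma > 1/2$ is used — is the passage to the supremum norm together with the preservation of the H\"older constant under the prolongation; everything else is a one-line estimate, and no integrability of $C(\omega)$ is needed, since almost sure convergence already implies convergence in probability.
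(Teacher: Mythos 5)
Your proof is correct and is exactly the argument the paper has in mind: the paper's own proof consists of the single sentence ``It follows directly from the definition and the H\"older path property,'' and your write-up simply fills in the details (the supremum-norm rewriting, the preservation of the H\"older constant under the constant prolongation, and the bound $T\,C(\omega)^2\epsilon^{2\gamma-1}\to 0$), then passes to the global quadratic variation via Remark \ref{CClassGlobal} as the statement itself indicates. No gaps.
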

\begin{proof}
It follows directly from the definition and the H\"older path property.
\end{proof} 
A typical example of such processes
 is fractional Brownian motion with Hurst parameter $H > \frac{1}{2}$
or the bifractional Brownian motion with parameters $H, K$ and $HK > \frac{1}{2}$, see for
 instance \cite{hv, rtudor}.
By Proposition 4.7 \cite{DGR1},  the window of a classical Wiener process 
has no scalar quadratic variation so no 
  global quadratic variation 
since condition (H1) in Definition \ref{DChiCov} cannot be fulfilled with respect to 
$\chi= (B\hat{\otimes}_{\pi}B)^*$.
For this reason, it is important to investigate if it has a  
$\chi$-quadratic variation for a suitable subspace $\chi$
of $(B\hat{\otimes}_{\pi}B)^{\ast}$. 
 In the framework of window processes, typical examples of $\chi$ are the following.
\begin{enumerate}
\item $\mathcal{M}([-T,0]^{2})$ equipped with the total variation norm. 
\item $L^{2}([-\tau,0]^{2})$.
\item $\mathcal{D}_{0,0}=\{ \mu(dx,dy)=\lambda \,  
\delta_{0}(dx)\otimes \delta_{0}(dy), \lambda \in \R \} $.
\item Let $\mathcal{D}_{0}$ be the one-dimensional space of measures obtained as 
multiple of the Dirac measure  $\delta_{0}$. 
The following linear subspace of  $\mathcal{M}([-T,0]^{2})$ given by
 $$  \mathcal{D}_{0,0} \; \oplus \; \Big( L^{2}([-T,0])  {\otimes}  \shd_{0} \Big) \; 
\oplus \; \Big( \shd_{0} {\otimes} L^{2}([-T,0]) \Big) \; \oplus \; L^{2}([-T,0]^{2}).$$
This is a Banach space, equipped with a self-explained sum of four norms.
By the lines above Remark 3.5 in \cite{DGR1}, that space is the Hilbert tensor
product  $(\shd_{0}\oplus L^{2})\hat{\otimes}_{h}^{2}$.
\item $Diag:=\left\{\mu(dx,dy)=g(x)\delta_{y}(dx)dy; g \in L^{\infty}([-T,0])
\right\}$.
\item The direct sum $\chi_0$ of the spaces defined in  4. and  5.
 is a Chi-subspace. We remind item 5. at the beginning of Section \ref{window}.
\end{enumerate}
\begin{rem} \label{R518Chi}
The window Brownian
motion $W(\cdot)$ does not have a
$\chi$ -quadratic variation for  $\chi = \mathcal{M}([-\tau,0]^{2})$.
This follows because  the bidual of $C([-\tau,0]^2)$ is isometrically
embedded into its bidual, and  the window Brownian motion has no scalar
quadratic variation. In particular condition (H1) of the $\chi$-covariation cannot be fulfilled.
\end{rem}
In all the other cases a classical Wiener process has a $\chi$-quadratic
variation. Indeed this also extends to the case of
a generic finite quadratic variation process.
The proposition below is the consequence of 
Propositions 4.8 and 4.15 of \cite{DGR1}
and the fact that the direct sum of Chi-subspaces is a Chi-subspace.

From now on, in this section, for simplicity we set $\tau = T$.
\begin{prop} \label{PChiCalculation}
 Let $X$ be a finite quadratic variation process. Then $X(\cdot)$ has 
a $\chi_0$-quadratic variation.
Moreover, for $\mu \in \chi_0$, we have
$$[X(\cdot)]_{t}(\mu)=\int_{D_{t}} d\mu(x,y) [X]_{t-x},$$
where $D_{t}$ is the diagonal
$ \{(x,y) \in  [-T,0]^2 \vert -t \le x = y \le 0\}$.
\end{prop}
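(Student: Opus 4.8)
The plan is to exploit the direct-sum structure $\chi_0 = \chi^{(4)} \oplus Diag$, where $\chi^{(4)} = (\shd_0\oplus L^2)\hat\otimes_h^2$ denotes the Hilbert tensor product of item 4. and $Diag$ the space of item 5. First I would record that each summand is a Chi-subspace of $(B\hat\otimes_\pi B)^\ast$ and that, by the last item of the list opening Section \ref{SecChi6} (a direct sum of Chi-subspaces is again a Chi-subspace, Proposition 3.16 of \cite{DGR1}), $\chi_0$ is itself a Chi-subspace. Equipping the sum with the sum of the norms, its dual norm is the maximum of the two dual norms, and since $\mu \mapsto [X(\cdot),X(\cdot)]^\varepsilon(\mu)$ is linear, the three requirements (H1)--(H3) of Definition \ref{DChiCov} for $\chi_0$ split into the corresponding requirements for $\chi^{(4)}$ and for $Diag$ separately. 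This is exactly the point where I would invoke Proposition 4.8 of \cite{DGR1} for the piece $\chi^{(4)}$ and Proposition 4.15 of \cite{DGR1} for the piece $Diag$, both established precisely for the window of a finite quadratic variation process.

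The computational guiding principle, which those two propositions make rigorous, is a \emph{diagonal concentration}. For $\mu \in \chi_0$ one has, using $(X_{r+\varepsilon}(\cdot)-X_r(\cdot))(x) = X_{r+\varepsilon+x}-X_{r+x}$,
\[
[X(\cdot),X(\cdot)]^\varepsilon(\mu)_t = \frac{1}{\varepsilon}\int_0^t \left( \int_{[-T,0]^2} (X_{r+\varepsilon+x}-X_{r+x})(X_{r+\varepsilon+y}-X_{r+y})\, \mu(dx,dy)\right) dr .
\]
For $x \neq y$ the two increments are taken over intervals that become disjoint as $\varepsilon \to 0$, and their $\frac{dr}{\varepsilon}$-average dies in the limit; consequently the absolutely continuous component $L^2([-T,0]^2)$ and the mixed components $L^2\otimes\shd_0$ and $\shd_0\otimes L^2$, all of which do not charge the diagonal $\{x=y\}$, contribute nothing. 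Only the components carried by the diagonal survive: the atom $\shd_{0,0} = \lambda\,\delta_0\otimes\delta_0$ yields $\lambda[X]_t$, and for $\mu(dx,dy) = g(x)\delta_y(dx)dy \in Diag$ the factor $\delta_y(dx)$ forces $x=y$, collapsing the pairing to the genuine square $\frac{1}{\varepsilon}\int_0^t\int_{-T}^0 (X_{r+\varepsilon+y}-X_{r+y})^2 g(y)\,dy\,dr$, which by a time change and the very definition of $[X]$ converges to the integral of $[X]$ against $\mu$ restricted to the diagonal $D_t$. Assembling these contributions over the summands gives the announced formula $[X(\cdot)]_t(\mu) = \int_{D_t} d\mu(x,y)\,[X]_{t-x}$.

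The step I expect to be the main obstacle is the rigorous version of this heuristic, namely the joint verification of (H1) and (H2): the almost sure uniform boundedness, along a suitable subsequence, of $\frac{1}{\varepsilon}\int_0^T \|(X_{r+\varepsilon}(\cdot)-X_r(\cdot))^{\otimes 2}\|_{\chi_0^\ast}\,dr$, together with the actual vanishing of the off-diagonal cross terms, which is where the finite quadratic variation hypothesis truly enters and which is the technical heart of Proposition 4.8 of \cite{DGR1}. Because $\|\cdot\|_{\chi_0^\ast} = \max(\|\cdot\|_{(\chi^{(4)})^\ast},\|\cdot\|_{Diag^\ast})$, the bound (H1) reduces to estimating each dual norm of the rank-one tensor; the $Diag$ estimate, via the forced equality $x=y$, is controlled by $\int_{-T}^0 \big(\tfrac{1}{\varepsilon}\int_0^T (X_{s+\varepsilon+y}-X_{s+y})^2\,ds\big)\,dy$ and hence by $\sup_\varepsilon \tfrac{1}{\varepsilon}\int_0^T (X_{s+\varepsilon}-X_s)^2\,ds$, finite by assumption. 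Finally (H3) is immediate once the limit is identified: $\mu \mapsto \int_{D_\cdot} d\mu\,[X]$ is a $\chi_0^\ast$-valued process of bounded variation because $t\mapsto [X]_t$ is increasing, so it provides the required modification and completes the argument.
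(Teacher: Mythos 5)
Your proof follows exactly the paper's route: the paper derives the proposition as a consequence of Propositions 4.8 and 4.15 of \cite{DGR1} applied to the two summands of $\chi_0$, combined with the fact (Proposition 3.16 of \cite{DGR1}) that a direct sum of Chi-subspaces is a Chi-subspace. Your additional heuristic on diagonal concentration and the discussion of (H1)--(H3) is a sound elaboration of what those cited results establish, but the core argument is the same.
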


In fact a Chi-subspace will plays the role of a {\it suitable}
subspace of $ (B \hat \otimes_\pi B)^\ast$,
  in which lives the second Fr\'echet derivative 
of a functional $F:B \rightarrow \R$ is forced to live,
 in view of expanding $F(\X)$ via a It\^o type 
formula of the type \eqref{FITOAtt}.

\begin{ese} \label{EXE}
Here are some typical particular cases of elementary functionals
whose second derivatives belong to some Chi-spaces mentioned above.
The details of the verification are left to the reader.
\begin{enumerate}
\item $F(\eta)=f(\eta(0))$ where $f: \R \rightarrow \R$
is of class $C^2$. Then 
 $D^{2}F(\eta) \in \shd_{0,0}$ for every $\eta \in B$.
\item $F(\eta)=\left( \int_{-T}^{0}\eta(s)ds \right)^{2}$. Then,
 for every $\eta \in B$,
   $D^{2} F(\eta) \in (\shd_{0}\oplus L^{2})\hat{\otimes}_{h}^{2}$.
\item $F(\eta)=\int_{-T}^{0} \eta(s)^{2}ds$. In this case 
  $D^{2} F(\eta) \in Diag$
%\oplus \mathcal{D}_{0,0}$ 
for  every $\eta \in B$.
\end{enumerate}
\end{ese}

\subsection{Convolution type processes}
\label{Conv}

Let $H$ be a separable Hilbert space.
Those processes, taking values in $H$, are the natural generalization of It\^o type processes.
Let $A$ be the generator of a $C_0$-semigroup on $H$. 
 Denote again by $D(A)$ and $D(A^*)$ respectively the domains of $A$ and 
$A^*$ endowed with the graph norm.

Let $U_0, U$ be separable Hilbert
spaces
according to Sections \ref{SGPF} and \ref{DBZI}.
Let  $\W$ be a $Q$-Wiener process with values in $U$ where $Q \in \mathcal{L}(U)$ a positive, injective 
and self-adjoint operator and define again $U_0:=Q^{1/2} (U)$
 endowed with the scalar product $\left\langle a,b \right\rangle_{U_0} := \left\langle Q^{-1/2}a,Q^{-1/2}b
 \right\rangle$. 
 Let ${\sigma} =  (\sigma_t, t \in [0,T])$
  with paths a.s. in  $\shl_2(U_0;H)$ and $b =  (b_t, t \in [0,T])$
with paths taking values in $H$ 
being                                 
predictable
such that 
\begin{equation}
\label{eq:hp-per-existence-mild}
\mathbb{P} \left [ \int_0^T \left (
 \|\sigma_t\|^2_{\mathcal{L}_2(U_0; H)} + |b_t| \right)  dt < \infty \right ] = 1.
\end{equation}
Let $x_0 \in H$. 

\begin{dfn} \label{D314} A continuous process of the type 
\begin{equation} \label{DConvProc}
 \X_t = e^{tA} x_0 + \int_0^t e^{(t-r)A} \sigma_r d \W_r +
 \int_0^t e^{(t-r)A} b_r dr,
\end{equation}
is said  {\bf convolution type process} (related to $A$).
\end{dfn}
$e^{tA}$ stands of course for the $C_0$-semigroup associated
with $A$. 
Clearly if $A = 0$, the semigroup is the identity, 
so a convolution type process is
an It\^o type process.
Natural examples of convolution processes are given by mild 
solutions of  stochastic PDEs, see for instance \cite{DaPratoZabczyk92} Chapter 7 
or \cite{GawareckiMandrekar10} Chapter 3.1.
\begin{prop}\label{PConv}
 Let $\X$ be a convolution type process as 
in \eqref{DConvProc}
and 
$$\nu_0 = D(A^\ast) \subset H^\ast, \quad \chi = \nu_0 \hat \otimes_\pi \nu_0.$$
The following properties hold.
\begin{enumerate}
\item 
 $\X$  admits a decomposition of the type
$\M + \V$
where
$$
\M_t = x_0 + \int_0^t\sigma_r d\W_r, \quad
\V_t  = \int_0^t b_r dr + \A_t, \ t \in [0,T],
$$
where $\A$ is a progressively measurable process such that
\begin{equation}
\label{eq:properA}
{}_{H}\langle \A_t, \phi \rangle_{H^\ast}  = \int_0^t 
   {}_{H}\langle \X_r, A^\ast  \phi \rangle_{H^\ast} dr,
 \  \forall  \phi \in \nu_0.
\end{equation}
\item Let $\bar \nu_0$ be the dual of $D(A^\ast)$, $\bar \nu_0$ contains $H$ since $D(A^\ast)$ and $H$ are Hilbert spaces and then reflexive.
Then $\X$ is an $\bar \nu_0$-semimartingale with decomposition $\M + \V$,
$\M$ being the local martingale part.
\item The process $\A$ appearing in 1. admits a $\chi$-zero
quadratic variation.
\item $\X$ admits a $\chi$-quadratic variation given by
\begin{equation} \label{EE3}
\widetilde{[\X,\X]^\chi}(\varphi) =  \int_0^t 
{\mathrm Tr}\left(L_\varphi  (\sigma_r 
Q^{\frac{1}{2}})  (\mathbb \sigma_r Q^{\frac{1}{2}} )^\ast \right) dr,\quad \varphi \in \chi , 
\end{equation}
where $L_\varphi$ was defined in \eqref{eq:expressionLB}.
\end{enumerate}
\end{prop}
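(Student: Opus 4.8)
The plan is to reduce the four assertions to the general results already available for $\bar \nu_0$-semimartingales (Proposition \ref{P67}) and for Brownian-type martingales (Proposition \ref{CovMart}); the genuinely new work lies in constructing the decomposition of item 1 and checking, in item 2, that it exhibits $\X$ as a $\bar \nu_0$-semimartingale.

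For item 1 I would set $\A_t := \X_t - \M_t - \int_0^t b_r\,dr$ and test against $\phi \in \nu_0 = D(A^\ast)$. Since $\X$ given by \eqref{DConvProc} is by construction the stochastic convolution, i.e. the mild solution of the abstract equation with drift $A\X + b$ and diffusion $\sigma$, the standard equivalence between mild and weak solutions (see \cite{DaPratoZabczyk92}) yields, for every $\phi \in D(A^\ast)$,
\[
{}_H\langle \X_t, \phi\rangle_{H^\ast} = \langle x_0,\phi\rangle + \int_0^t {}_H\langle \X_r, A^\ast\phi\rangle_{H^\ast}\,dr + \langle \M_t - x_0, \phi\rangle + \int_0^t \langle b_r,\phi\rangle\,dr,
\]
where the term $\langle \X_r, A^\ast\phi\rangle$ is meaningful precisely because $A^\ast\phi \in H$, without requiring $\X_r \in D(A)$. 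Rearranging, the right-most two drift/stochastic terms combine with $x_0$ into $\langle \M_t + \int_0^t b_r\,dr, \phi\rangle$, so the identity reads $\langle \A_t, \phi\rangle = \int_0^t {}_H\langle \X_r, A^\ast\phi\rangle_{H^\ast}\,dr$, which is \eqref{eq:properA}; progressive measurability of $\A$ is inherited from $\X$, $\M$ and the drift.

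For item 2, $\M_t = x_0 + \int_0^t \sigma_r\,d\W_r$ is an $H$-valued local martingale by the Hilbert-valued It\^o integral (Subsection \ref{DBZI}, Proposition \ref{PChainRule-2}), hence also a $\bar \nu_0$-valued local martingale since $H \hookrightarrow \bar \nu_0$ continuously. It remains to see that $\V = \int_0^\cdot b_r\,dr + \A$ has bounded variation as a $\bar \nu_0$-valued process: the drift is $H$-valued of bounded variation, so a fortiori $\bar \nu_0$-valued of bounded variation, and for $\A$ I read \eqref{eq:properA} as $\A_t = \int_0^t \dot\A_r\,dr$ (a Bochner integral in $\bar \nu_0 = \nu_0^\ast$), where $\dot\A_r \in \bar \nu_0$ acts by $\langle \dot\A_r,\phi\rangle = {}_H\langle\X_r, A^\ast\phi\rangle_{H^\ast}$. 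The graph norm on $\nu_0 = D(A^\ast)$ gives $|A^\ast\phi|_H \le |\phi|_{\nu_0}$, whence $|\dot\A_r|_{\bar \nu_0} = \sup_{|\phi|_{\nu_0}\le 1}|{}_H\langle \X_r, A^\ast\phi\rangle_{H^\ast}| \le |\X_r|_H$; since $\X$ has continuous $H$-valued paths, $\int_0^T |\dot\A_r|_{\bar \nu_0}\,dr \le T\sup_{r\in[0,T]}|\X_r|_H < \infty$ a.s. Thus $\A$, hence $\V$, is a $\bar \nu_0$-valued bounded variation process and $\X = \M + \V$ is a $\bar \nu_0$-semimartingale with local martingale part $\M$.

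Items 3 and 4 then follow from the general theory. Because $\A$ is a $\bar \nu_0$-valued bounded variation process, the argument of item 1 of Proposition \ref{P67} (which uses only this property, via the bound $I(\epsilon) \le \frac{1}{\epsilon}\int_0^T|\A(r+\epsilon)-\A(r)|^2_{\bar \nu_0}\,dr \to 0$) gives $[\A,\A]^\chi = 0$, which is item 3. For item 4, I apply Proposition \ref{P67} to $\X = \M + \V$: since $D(A^\ast)$ is a reflexive Hilbert space one has $\nu_0 = \bar \nu_0^\ast$ and $\chi = \nu_0 \hat \otimes_\pi \nu_0$ exactly as required, so item 4 of that proposition yields $\widetilde{[\X,\X]^\chi}(\varphi) = {}_{(H \hat \otimes_\pi H)^\ast}\langle\varphi, [\M,\M]^\otimes\rangle_{H \hat \otimes_\pi H}$ for $\varphi \in \chi$. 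Finally $\M$ is a martingale of the form treated in Proposition \ref{CovMart} with integrand $\sigma$ (assumption \eqref{F47} follows from \eqref{eq:hp-per-existence-mild} through \eqref{EHSQ}, and the additive constant $x_0$ is irrelevant for covariations), so item 3 there evaluates $\langle\varphi, [\M,\M]^\otimes\rangle$ as $\int_0^t {\mathrm Tr}(L_\varphi (\sigma_r Q^{1/2})(\sigma_r Q^{1/2})^\ast)\,dr$, which is precisely \eqref{EE3}. The main obstacle is item 1: making \eqref{eq:properA} rigorous rests on the (non-trivial) mild-to-weak solution equivalence for stochastic convolutions and on isolating the stochastic-convolution contribution into $\M$ rather than $\A$; once that weak relation is in place, the graph-norm estimate of item 2 is routine and items 3 and 4 are bookkeeping on top of Propositions \ref{P67} and \ref{CovMart}.
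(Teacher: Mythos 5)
Your proposal is correct and follows essentially the same route as the paper: item 1 via the mild-to-weak equivalence (the paper cites Ondrej\'at's Theorem 12 rather than re-deriving it), item 2 via the graph-norm estimate $\vert\langle \A_t-\A_s,\phi\rangle\vert \le \int_s^t \vert\X_r\vert_H\,dr\,\vert\phi\vert_{D(A^\ast)}$, and items 3 and 4 by direct appeal to Proposition \ref{P67} and item 3 of Proposition \ref{CovMart}.
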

\begin{proof} 

\begin{enumerate}
\item This follows by Theorem 12, \cite{Ondrejat04}, see also
 Lemma 5.1 \cite{RusFab}.
% Corollary 5.3 of \cite{RusFab}.
\item 
The process $\A$ can be considered as a $\bar\nu_0$-valued process. From
\eqref{eq:properA}, it follows that, for $0\leq s \leq t \leq T$ and $\phi\in D(A^*)$,
 using (\ref{eq:properA}), we have
\[
\phantom{.}_H \left\langle \A_t - \A_s , \phi \right\rangle_{H^*} = \int_s^t d s \phantom{.}_H\! 
\left\langle \X_r, A^* \phi \right\rangle_{H^*},
\]
so the $|\cdot|_{\bar \nu_0}$ norm of $\A_t - \A_s$ is estimated by 
\[
\sup_{\begin{subarray}{l}
       \phi\in D(A^*)\\
       |\phi|_{D(A^*)} \leq 1
      \end{subarray} }
      | \left\langle \A_t - \A_s , \phi \right\rangle | \leq 
\int_s^t d s 
\sup_{\begin{subarray}{l}
       \phi\in D(A^*)\\
       |\phi|_{D(A^*)} \leq 1
      \end{subarray} }
\phantom{.}_H \vert \left\langle \X_r, A^* \phi \right\rangle_{H^*} \vert
\leq
\int_s^t  
\vert \X_r \vert_H d r.
\]

Previous inequalities show that $\A$ has a total variation as $\bar\nu_0$-valued process which is bounded by 
$
\int_0^T dr |\X_r|_H.
$
Since the $\bar\nu_0$-norm is dominated by  the $H$-norm and
 $\int_0^\cdot b_r dr$ is an $H$-valued bounded 
variation process, then $\V$ is also a bounded variation $\bar\nu_0$-valued process. 
Finally $\X$ is a $\bar\nu_0$-semimartingale.
\item follows from item 1. of Proposition \ref{P67}.
\item follows from item 4. of Proposition \ref{P67} and item 3. of
 Proposition \ref{CovMart}.
%OPERATORIALE
\end{enumerate}
\end{proof}

\section{Stochastic calculus}

\subsection{Banach space valued forward integrals}

Let $U, H$ be separable Hilbert spaces and $B, E$ be separable Banach spaces.

\begin{dfn} \label{ForwardInt}
 Let $(\Y_t, t \in [0,T])$
be a (strongly) measurable process taking values in $\shl(B,E)$ and $\X = (\X_t, t \in [0,T])$,
be a $B$-valued continuous process and the following.
$\int_0^T \Vert \Y_r \Vert_{\shl(B,E)} dr < \infty$. a.s.
We suppose the following.
\begin{itemize}
\item $\lim_{\varepsilon \rightarrow 0} \int_0^t \Y_r \frac{\X_{r+\varepsilon}
- \X_r}{\varepsilon} dr$ exists in probability for any $t \in [0,T]$.
\item Previous limit random function admits a continuous version.
\end{itemize}
In this case, we say that the {\bf forward integral} 
of $\Y$ with respect to $\X$, denoted by
$\int_0^\cdot \Y d^-\X$ exists.
\end{dfn}
\begin{rem} \label{RScalarCase}
\begin{enumerate}
\item
If $E = \R$ than we often denote 
$ \int_0^\cdot \Y_r d\X_r = \int_0^\cdot 
 {}_{B^\ast} \langle \Y_r, d\X_r \rangle_B.$  
\item If $\X = \V$ is a continuous  bounded
variation process, and $\Y$ is an a.s. bounded  measurable
process having at most  countable number  jumps
(as for instance cadlag or caglad), then  
$ \int_0^\cdot \Y_r d^- \X_r$ exists and it equals
the Bochner-Lebesgue integral $ \int_0^\cdot \Y_r d\X_r$. \\
If $\X$ is a.s. is absolutely continuous with derivative
$r \mapsto {\dot X_r}$, then, whenever 
$\int_0^T \Vert Y_r \Vert_{\shl(B,E)}
 \vert {\dot X}_r \vert_B  dr < \infty$ a.s., 
 $ \int_0^\cdot \Y_r d^-\X_r$ exists and 
it equals the same Bochner integral as before.\\
In both cases, the proof is similar to the case when 
the processes are real-valued, see e.g.
Proposition 1.1 in \cite{rv1} making use of stochastic Fubini's theorem,
i.e. Theorem 4.18 of \cite{DaPratoZabczyk92}.
\item Suppose that $B = H$ and $E = U$. Let 
$\X = \M$ be an $({\mathscr F}_t)$-local martingale and $\Y$ be a 
predictable process such that
$\int_0^T \Vert \Y_r \Vert^2_{\shl(U,H)} d[M]_r^{\R, cl} < \infty$
a.s. Then $\int_0^t \Y_r d^- \M_r$ exists and it equals 
the It\^o type integral  $\int_0^t \Y_r d\M_r$, see Theorem 3.6 in 
\cite{RusFab}.
\item Suppose that $\M = \W$ is a $Q$-Wiener process with values in a separable Hilbert space $H$
and $\Y$ is a predictable process such that
$\int_0^T  {\mathrm Tr} ((\Y_r Q^{\frac{1}{2}}) (\Y_r Q^{\frac{1}{2}})^\ast dr < \infty$ a.s.
Then the forward integral $\int_0^t \Y_r d ^-\W_r, t \in [0,T]$ exists
and it equals the It\^o integral  $\int_0^t \Y_r d \W_r,
  t \in [0,T]$, see Theorem 3.4 in 
\cite{RusFab}.
\item A consequence of the previous two items is the following.
If $\X = \M + \V$ is an $H$-valued semimartingale, and $\Y$ is a cadlag
predictable process, then
$\int_0^t \Y d^- \X, t \in [0,T]$, exists and it is the sum
$\int_0^t \Y d \M + \int_0^t \Y d \V$.
\end{enumerate}
\end{rem}

\subsection{It\^o formulae}

We can now state the following Banach space valued   It\^o's formula, see Theorem 5.2 of \cite{DGR1}.
 \medskip

\begin{thm} \label{TITOF}
 Let $B$ a separable Banach space, $\chi$ be a Chi-subspace of $(B\hat{\otimes}_{\pi}B)^{\ast}$ and 
let $\mathbb{X}$ a $B$-valued continuous process admitting a $\chi$-quadratic variation.
Let $F:[0,T]\times B\longrightarrow \mathbb{R}$ be $C^{1,2}$ Fr\'echet such that
$$
D^{2}F:[0,T]\times B\longrightarrow \chi \subset
(B\hat{\otimes}_{\pi}B)^{\ast} \quad \mbox{ continuously}. 
$$
Then  the forward integral 
\begin{equation} \label{FIFOF}
\int_{0}^{t}
%\prescript{}
{}_{B^{\ast}} {\langle} DF(s,\mathbb{X}_{s}),d^{-}\mathbb{X}_{s}\rangle_{B}, \ t \in [0,T],
\end{equation}
exists and the following formula holds: 
\begin{equation} \label{FIFO1}
%\begin{split}
F(t,\mathbb{X}_{t})
%&
=F(0, \mathbb{X}_{0})+\int_{0}^{t}\partial_{r}
F(r,\mathbb{X}_{r})dr  + \int_{0}^{t}
{}_{B^{\ast}} {\langle} DF(r,\mathbb{X}_{r}),d^{-}\mathbb{X}_{r}\rangle_{B} +\\
%&
\frac{1}{2}\int_{0}^{t} 
{}_{\chi} {\langle} D^{2}F(r,\mathbb{X}_{r}),d\widetilde{[\mathbb{X}]}_{r}
\rangle_{\chi^{\ast}}.\\
%\end{split}
\end{equation}
\end{thm}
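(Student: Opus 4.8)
The plan is to make rigorous the formal argument sketched in Section \ref{IDCSCG}, extracting from the three conditions (H1)--(H3) defining the $\chi$-quadratic variation exactly what is needed at each step. I would begin from an exact regularization identity. Extending $F(\cdot,\X_\cdot)$ outside $[0,T]$ by the usual convention, the real function $s\mapsto F(s,\X_s)$ is continuous, so that
\[
\frac{1}{\epsilon}\int_0^t \left(F(s+\epsilon,\X_{s+\epsilon})-F(s,\X_s)\right)\,ds \xrightarrow[\epsilon\to 0]{ucp} F(t,\X_t)-F(0,\X_0).
\]
I would then split the increment inside the integral as the sum of a pure time increment $F(s+\epsilon,\X_{s+\epsilon})-F(s,\X_{s+\epsilon})$ and a pure space increment $F(s,\X_{s+\epsilon})-F(s,\X_s)$. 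Dividing the time part by $\epsilon$ and integrating, continuity of $\partial_r F$ together with the uniform continuity of the paths of $\X$ yields convergence to $\int_0^t \partial_r F(r,\X_r)\,dr$.

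For the space increment I would use the second order Taylor formula with integral remainder,
\begin{multline*}
F(s,\X_{s+\epsilon})-F(s,\X_s) = {}_{B^{\ast}}\langle DF(s,\X_s), \X_{s+\epsilon}-\X_s\rangle_B \\
+ \int_0^1 (1-\lambda)\,{}_{(B\hat{\otimes}_{\pi}B)^{\ast}}\langle D^2F(s,\X_s+\lambda(\X_{s+\epsilon}-\X_s)), (\X_{s+\epsilon}-\X_s)^{\otimes 2}\rangle_{B\hat{\otimes}_{\pi}B}\,d\lambda,
\end{multline*}
divide by $\epsilon$ and integrate over $s\in[0,t]$. The first order term is, by definition, the $\epsilon$-approximation of the forward integral \eqref{FIFOF}; since the left-hand side of the first display converges and the other contributions will be shown to converge, this term converges as well, and its limit is precisely \eqref{FIFOF}. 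Thus the existence of the forward integral comes out as a by-product of the proof rather than being assumed.

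The remainder coming from Taylor's formula I would handle by first replacing $D^2F(s,\X_s+\lambda(\X_{s+\epsilon}-\X_s))$ by $D^2F(s,\X_s)$. The resulting error, paired against $(\X_{s+\epsilon}-\X_s)^{\otimes 2}$ and integrated in $\frac{1}{\epsilon}\,ds$, is bounded by the supremum over $s,\lambda$ of $\|D^2F(s,\X_s+\lambda(\X_{s+\epsilon}-\X_s))-D^2F(s,\X_s)\|_{\chi}$ times the quantity $\frac{1}{\epsilon}\int_0^T \|(\X_{r+\epsilon}-\X_r)^{\otimes 2}\|_{\chi^{\ast}}\,dr$. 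The latter factor is a.s. bounded along a subsequence by (H1), while the former tends to zero because $\{(s,\X_s)\}$ is compact in $[0,T]\times B$, $D^2F$ is uniformly continuous there, and $\sup_s|\X_{s+\epsilon}-\X_s|_B\to 0$; hence the remainder vanishes. The factor $\int_0^1(1-\lambda)\,d\lambda=\tfrac{1}{2}$ then produces the expected coefficient.

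It remains to prove
\begin{multline*}
\frac{1}{2\epsilon}\int_0^t {}_{\chi}\langle D^2F(s,\X_s), J\big((\X_{s+\epsilon}-\X_s)^{\otimes 2}\big)\rangle_{\chi^{\ast}}\,ds \\
\xrightarrow[\epsilon\to 0]{ucp} \frac{1}{2}\int_0^t {}_{\chi}\langle D^2F(r,\X_r), d\widetilde{[\X]}_r\rangle_{\chi^{\ast}},
\end{multline*}
and this is the step I expect to be the main obstacle. The difficulty is that $s\mapsto D^2F(s,\X_s)$ is a \emph{varying} continuous $\chi$-valued integrand, whereas the defining property (H2) only supplies convergence for a \emph{fixed} $\varphi\in\chi$. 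I would bridge this gap by approximating the continuous (hence, on the compact path, uniformly $\chi$-approximable) process $D^2F(\cdot,\X_\cdot)$ by $\chi$-valued step processes that are piecewise constant in time; for such step processes the left-hand side reduces, interval by interval, to finitely many instances of (H2), and the limit is the Riemann--Stieltjes integral of the step integrand against the $\chi^{\ast}$-valued bounded variation, weak-star continuous process $\widetilde{[\X]}$ furnished by (H3). The approximation errors on the $\epsilon$-side are controlled by $\sup_s\|D^2F(s,\X_s)-\Theta^n_s\|_{\chi}$ times the a.s.\ bound of (H1), and those on the limit side by the same supremum times the total variation of $\widetilde{[\X]}$; both are made uniformly small. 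The delicate point throughout is to interchange the $\epsilon$-limit with the $\chi$-pairing uniformly in time and in probability, which is exactly where (H1) (uniform control), (H2) (scalar convergence) and (H3) (a genuine bounded variation $\chi^{\ast}$-valued limit) must be used in concert.
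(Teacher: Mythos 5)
Your proposal is correct and follows essentially the same route as the paper: the paper only sketches this argument formally in Section \ref{IDCSCG} and refers to \cite{DGR1} for the details, and your fleshed-out version (regularization identity, time/space splitting, Taylor expansion with the first-order term identified as the $\epsilon$-approximation of the forward integral, and the step-process approximation of the $\chi$-valued integrand $D^2F(\cdot,\X_\cdot)$ combined with (H1)--(H3) to pass to the limit in the second-order term) is precisely the strategy of that cited proof. You have also correctly isolated the genuinely delicate point, namely the passage from the fixed-$\varphi$ convergence of (H2) to a time-varying integrand, and the role of (H1) and (H3) in controlling the approximation errors on both sides.
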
 
%\begin{rem} \label{RITOF}
 The assumption that  the second derivatives to lives in a suitable $\chi$-space can be relaxed
in some situations, see for instance Proposition \ref{P69} below. 
%\end{rem}

\begin{prop} \label{P69}
Let $H$ be a separable Hilbert space.
Let $\nu_0$ be a dense subset of $H^\ast$. We set  $\chi = \nu_0 \hat 
\otimes_\pi \nu_0$. 
Let $\X$ be a $\chi$-finite quadratic variation $H$-valued process. 
Let $F\colon [0,T] \times H \to \mathbb{R}$ of class $C^{1,2}$ such that
 $(t,x)\to D F(t,x)$ is continuous from $[0,T]\times H$ to $\nu_0$.
 Suppose moreover the following assumptions.
\begin{itemize}
\item[(i)] There exists a (cadlag) bounded variation process 
$C\colon [0,T] \times \Omega \to H\hat\otimes_\pi H$ such that, for all $t$ in $[0,T]$ and $\varphi\in \chi$, 
\[
{}_{H \hat \otimes_\pi H} \langle C_t(\cdot),\varphi \rangle_{(H \hat \otimes_\pi H)^\ast}
 = {[\X, \X]_t^{\chi}}(\varphi)(\cdot) \qquad a.s.
\]
\item[(ii)] For every continuous function $\Gamma\colon [0,T] \times H \to \nu_0$ the integral
\begin{equation} \label{ForwI}
\int_0^\cdot \left\langle \Gamma(r,\X_r), d ^- \X_r \right\rangle
\end{equation}
exists.
\end{itemize}
Then
\begin{eqnarray}
\label{eq:Ito}
F(t,\X_t) &=& F(0, \X_0) + \int_0^t {}_{H^\ast}\left\langle D F(r, \X_r), d^- \X_r \right\rangle_H 
\nonumber \\
&& \\
&+ & \frac12 \int_0^t  {}_{(H \hat\otimes_{\pi} H)^*}
\left \langle D^2 F(r, \X_r) , d C_r 
\right\rangle_{H \hat\otimes_{\pi} H} 
  + \int_0^t \partial_r F(r, \X_r) d r \nonumber.
\end{eqnarray}
\end{prop}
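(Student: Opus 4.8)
The plan is to reduce the statement to the Banach space It\^o formula of Theorem \ref{TITOF} by a cylindrical approximation of $F$. The obstruction to applying Theorem \ref{TITOF} directly is that $D^2F$ is only known to take values in $(H\hat\otimes_\pi H)^\ast$ and not in the smaller space $\chi=\nu_0\hat\otimes_\pi\nu_0$; this is precisely the hypothesis that assumptions (i) and (ii) are designed to replace. First I would fix a sequence of finite rank operators $P_n\colon H\to H$ with $P_n\to\mathrm{Id}_H$ strongly and such that $P_n^\ast(H^\ast)\subset\nu_0$, which exist because $\nu_0$ is dense in $H^\ast$: writing $P_n x=\sum_i\langle\phi_i,x\rangle v_i$ with $\phi_i\in\nu_0$. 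Setting $F_n(t,x):=F(t,P_nx)$, one checks that $F_n\in C^{1,2}$, that $DF_n(t,x)=P_n^\ast DF(t,P_nx)$ is continuous with values in $\nu_0$, and that $D^2F_n(t,x)=\sum_{i,j}D^2F(t,P_nx)(v_i,v_j)\,\phi_i\otimes\phi_j$ is a finite combination of elements $\phi_i\otimes\phi_j$ with $\phi_i,\phi_j\in\nu_0$, hence continuous with values in $\chi$. Thus Theorem \ref{TITOF} applies to each $F_n$.

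Next I would apply Theorem \ref{TITOF} to $F_n$ and rewrite the second order term using assumption (i). Since $\widetilde{[\X,\X]^\chi}_r(\varphi)={}_{H\hat\otimes_\pi H}\langle C_r,\varphi\rangle_{(H\hat\otimes_\pi H)^\ast}$ for $\varphi\in\chi$, while $D^2F_n(r,\X_r)\in\chi$ and $C$ has bounded variation in $H\hat\otimes_\pi H$, the $\chi^\ast$-valued Stieltjes integral becomes
\[
\int_0^t {}_{\chi}\langle D^2F_n(r,\X_r),d\widetilde{[\X]}^\chi_r\rangle_{\chi^\ast}
=\int_0^t {}_{(H\hat\otimes_\pi H)^\ast}\langle D^2F_n(r,\X_r),dC_r\rangle_{H\hat\otimes_\pi H}.
\]
I would then pass to the limit $n\to\infty$ in the three ``easy'' terms of the formula for $F_n$. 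The left hand side and the boundary term converge because $P_n\X_\cdot\to\X_\cdot$ and $F$ is continuous, while $\int_0^t\partial_rF_n(r,\X_r)\,dr\to\int_0^t\partial_rF(r,\X_r)\,dr$ by dominated convergence. For the second order term I would use that, for each fixed $\theta=\sum_m a_m\otimes b_m\in H\hat\otimes_\pi H$, one has ${}\langle D^2F_n(r,\X_r),\theta\rangle=\sum_m D^2F(r,P_n\X_r)(P_na_m,P_nb_m)\to{}\langle D^2F(r,\X_r),\theta\rangle$; applying this to the density of $dC_r$ against its total variation, and using the uniform bound $\|D^2F_n(r,\X_r)\|_{(H\hat\otimes_\pi H)^\ast}\le\sup_n\|P_n\|^2\sup_K\|D^2F\|$ over the compact range of the path, dominated convergence for the Lebesgue--Stieltjes integral gives $\int_0^t\langle D^2F_n,dC\rangle\to\int_0^t\langle D^2F,dC\rangle$.

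It remains to treat the forward integral, and this is where I expect the main difficulty. By assumption (ii) both $\int_0^\cdot\langle DF_n(r,\X_r),d^-\X_r\rangle$ and $\int_0^\cdot\langle DF(r,\X_r),d^-\X_r\rangle$ exist; combining the previous step with Theorem \ref{TITOF} for $F_n$, the processes $\int_0^\cdot\langle DF_n(r,\X_r),d^-\X_r\rangle$ converge ucp to some limit $L$. The delicate point is to identify $L$ with $\int_0^\cdot\langle DF(r,\X_r),d^-\X_r\rangle$, equivalently to show that $\int_0^t\langle DF_n(r,\X_r)-DF(r,\X_r),d^-\X_r\rangle\to0$. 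Since the forward integral is itself the $\varepsilon\to0$ limit of the regularizations $\tfrac1\varepsilon\int_0^t\langle(DF_n-DF)(r,\X_r),\X_{r+\varepsilon}-\X_r\rangle\,dr$, this amounts to interchanging the limits $n\to\infty$ and $\varepsilon\to0$.

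The hard part will therefore be this interchange of limits. I would carry it out by exploiting the cylindrical structure, rewriting $\langle DF_n(r,\X_r),\X_{r+\varepsilon}-\X_r\rangle=\langle DF(r,P_n\X_r),P_n(\X_{r+\varepsilon}-\X_r)\rangle$, so that $\int_0^\cdot\langle DF_n(r,\X_r),d^-\X_r\rangle$ is a forward integral of the finite dimensional process $P_n\X$, and then establishing an equicontinuity in $\varepsilon$, uniform in $n$, near $\varepsilon=0$. The required uniform control should come from assumption (ii), which by a uniform boundedness argument bounds the regularizing family, together with the $\chi^\ast$-estimate (H1) of Definition \ref{DChiCov}. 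Once $L=\int_0^\cdot\langle DF(r,\X_r),d^-\X_r\rangle$ is established, collecting the four limits yields formula \eqref{eq:Ito}.
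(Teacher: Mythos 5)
There is a genuine gap, and it sits exactly where you flagged ``the hard part'': the interchange of the limits $n\to\infty$ and $\varepsilon\to 0$ in the forward-integral term. Writing $J_\varepsilon(\Gamma)_t:=\frac1\varepsilon\int_0^t\langle\Gamma(r,\X_r),\X_{r+\varepsilon}-\X_r\rangle\,dr$, you need $\lim_n\lim_\varepsilon J_\varepsilon(DF_n)=\lim_\varepsilon\lim_n J_\varepsilon(DF_n)$, and neither of the two tools you invoke can supply the required uniformity in $n$. Assumption (ii) is purely qualitative: it asserts, for each fixed continuous $\Gamma$ with values in $\nu_0$, convergence in probability of $J_\varepsilon(\Gamma)_t$, and gives no modulus of convergence; a Banach--Steinhaus argument is not available here because the convergence is in probability (not in a norm) and the parameter space $C([0,T]\times H;\nu_0)$ is only a Fr\'echet space, so ``uniform boundedness'' does not yield equicontinuity of the family $\varepsilon\mapsto J_\varepsilon(DF_n)$ at $\varepsilon=0$ uniformly in $n$. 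As for (H1), it controls $\frac1\varepsilon\int_0^T\|(\X_{r+\varepsilon}-\X_r)^{\otimes 2}\|_{\chi^\ast}\,dr$; using $|\langle\psi,h\rangle|^2\le\|\psi\|_{\nu_0}^2\,\|h\otimes h\|_{\chi^\ast}$ this only bounds $|J_\varepsilon(\Gamma)_t|$ by a quantity of order $\varepsilon^{-1/2}\bigl(\frac1\varepsilon\int_0^t\|(\X_{r+\varepsilon}-\X_r)^{\otimes2}\|_{\chi^\ast}dr\bigr)^{1/2}\sup_r\|\Gamma(r,\X_r)\|_{\nu_0}$, which blows up as $\varepsilon\to0$. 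In short, first-order increments paired with $DF_n-DF$ are not controlled by second-order ($\chi^\ast$-tensor-square) information, and this is precisely why the existence of the forward integral has to be \emph{assumed} in (ii) rather than derived. Until you produce an actual argument for this interchange, the identification of the ucp limit $L$ of $\int_0^\cdot\langle DF_n,d^-\X\rangle$ with $\int_0^\cdot\langle DF,d^-\X\rangle$ is unproved, and the whole reduction collapses. (A secondary, fixable point: to get finite-rank maps $P_n$ with $P_n^\ast(H^\ast)\subset\nu_0$, $\sup_n\|P_n\|<\infty$ \emph{and} $P_n\to\mathrm{Id}$ strongly, you need a triangular array $\phi_i^{(n)}$ of approximants whose accuracy improves with $n$; a single sequence $\phi_i$ with $\|\phi_i-e_i^\ast\|\le 2^{-i}\delta$ gives uniform boundedness but not strong convergence to the identity.)

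For comparison: the paper does not prove the proposition here at all; it defers to Theorem 5.4 of \cite{RusFab}, whose argument is a direct Taylor expansion of $\frac1\varepsilon\bigl(F(r+\varepsilon,\X_{r+\varepsilon})-F(r,\X_r)\bigr)$ in the spirit of the proof of Theorem \ref{TITOF}. In that scheme the telescoping left-hand side and the first- and zero-order terms are shown to converge, the second-order term is handled through hypothesis (i), and hypothesis (ii) (with $\Gamma=DF$) is used once, to identify the limit of the remaining first-order regularization with the forward integral --- no approximation of $F$ by cylindrical functions and hence no interchange of limits is needed. If you want to salvage your route, you would have to prove the interchange by hand, which in effect amounts to redoing that Taylor-expansion estimate; so the cylindrical approximation does not buy you anything and introduces the one step that cannot be closed with the stated hypotheses.
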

\begin{proof}
In Theorem 5.4 of \cite{RusFab}, the result is formulated for the
 particular case $\nu_0 = D(A^*)$ where $A$ is
 the generator of a $C_0$-semigroup;  the arguments
to extend the result to the case of a generic $\nu_0$ are the same.

\end{proof}
\begin{rem}
\label{R68}
Clearly 
$$
(t,\omega) \mapsto \left (\varphi \mapsto 
  {}_{\chi^\ast} \langle C_t(\omega), \varphi \rangle_\chi \right) 
= \widetilde {[\X,\X]^\chi}_t(\omega), 
$$
are indistinguishable processes with values in $\chi^\ast$,
if we identify $\chi^\ast$ as a space which contains the
bidual of $H \hat \otimes_\pi H$ and therefore $H \hat \otimes_\pi H$
itself.
\end{rem}
A consequence of previous proposition is a natural It\^o
formula for convolution type processes.

\begin{prop}
\label{lm:Ito}
Let $\X$ be a convolution type process as in Definition
\ref{D314} with $\sigma$ and $b$ verifying 
\eqref{eq:hp-per-existence-mild}.
Assume that $F \in C^{1,2}([0,T] \times H)$ with $D F \in C([0,T] \times 
H, D(A^*))$.
Then, for every $t \in [0,T]$,
\begin{multline}
\label{eq:first-Dinkin}
F(t,\X_t) =  F(0,\X_0)  + \int_0^t \partial_r 
F(r,\X_r) d r\\  
+  \int_0^t \left\langle D F(r,\X_r), b_r \right\rangle d r + \int_0^t
 \left\langle D F(r,\X_r), \sigma_r d \W^Q_r \right\rangle\\ 
+  \int_0^t \left\langle A^* D F(r,\X_r), \X_r \right\rangle d r
+ \frac{1}{2}  \int_0^t \text{Tr} \left [\left ( \sigma_r  {Q}^{1/2} \right )
\left ( \sigma_r Q^{1/2}  \right)^\ast D^2 F(r,\X_r) \right ] d r,
 \qquad \mathbb{P}-a.s.
\end{multline}
where for $(r,\eta) \in [0,T] \times H$,
again we associate   $D^2 F(r,\eta)$, which is in principle
an element of ${\mathcal Bi}(H,H)$, with a map in $\shl(H)$, as in
\eqref{eq:expressionLB}.
\end{prop}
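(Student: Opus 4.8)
The plan is to apply the general It\^o formula of Proposition \ref{P69} with $\nu_0 = D(A^*)$ and $\chi = \nu_0 \hat\otimes_\pi \nu_0$, and then to identify each of the resulting terms with the corresponding term of \eqref{eq:first-Dinkin}. First I would check the hypotheses. That $\X$ has a $\chi$-quadratic variation and that $DF$ is continuous with values in $\nu_0$ are respectively item 4. of Proposition \ref{PConv} and an assumption of the statement. For hypothesis (i) I would set
\[
C_t := \int_0^t (\sigma_r Q^{1/2})(\sigma_r Q^{1/2})^* \, dr,
\]
regarded, through the isometry \eqref{Isomorphism} between $\shl_1(H)$ and $H \hat\otimes_\pi H$, as an $H\hat\otimes_\pi H$-valued process. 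Its total variation is controlled by $\int_0^T \| (\sigma_r Q^{1/2})(\sigma_r Q^{1/2})^* \|_{\shl_1(H)}\, dr = \int_0^T \|\sigma_r\|^2_{\shl_2(U_0;H)}\, dr$, which is finite by \eqref{EHSQ} and \eqref{eq:hp-per-existence-mild}; hence, using Proposition \ref{RExchange}, $C$ is a continuous bounded variation process, and by item 4. of Proposition \ref{PConv} it represents $[\X,\X]^\chi$ in the required sense. Hypothesis (ii) follows from the fact that $\X = \M + \V$ is a $\bar \nu_0$-semimartingale (item 2. of Proposition \ref{PConv}): for a continuous $\nu_0 = \bar \nu_0^*$-valued integrand the forward integral against $\M$ exists and equals the It\^o integral (items 3.--4. of Remark \ref{RScalarCase}), while the forward integral against the $\bar \nu_0$-bounded variation part $\V$ exists and coincides with the Lebesgue--Stieltjes integral (item 2. of Remark \ref{RScalarCase} and Definition \ref{D66}).

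Applying Proposition \ref{P69} then yields \eqref{eq:Ito}, namely $F(t,\X_t) = F(0,\X_0) + \int_0^t \langle DF(r,\X_r), d^- \X_r\rangle + \frac12 \int_0^t \langle D^2F(r,\X_r), dC_r\rangle + \int_0^t \partial_r F(r,\X_r)\,dr$. The $\partial_r F$ term already matches. For the second order term I would invoke Lemma \ref{L49}: since $C = [\M,\M]^\otimes$ as an $H\hat\otimes_\pi H$-valued process (comparing hypothesis (i) of Proposition \ref{P69} with item 4. of Proposition \ref{P67} and item 2. of Proposition \ref{CovMart}), taking $\J_r$ to correspond to $D^2F(r,\X_r)$ gives
\[
\frac12 \int_0^t \langle D^2F(r,\X_r), dC_r\rangle = \frac12 \int_0^t {\mathrm Tr}\!\left[ L_{D^2F(r,\X_r)}(\sigma_r Q^{1/2})(\sigma_r Q^{1/2})^* \right] dr,
\]
which, after identifying $L_{D^2F}$ with $D^2F$ as in \eqref{eq:expressionLB} and using the cyclicity of the trace, is exactly the last term of \eqref{eq:first-Dinkin}.

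It remains to split the forward integral $\int_0^t \langle DF(r,\X_r), d^- \X_r\rangle$ according to the decomposition $\X = \M + \V$ with $\M_t = x_0 + \int_0^t \sigma_r d\W^Q_r$ and $\V_t = \int_0^t b_r\,dr + \A_t$ from item 1. of Proposition \ref{PConv}. Since each piece exists, additivity of the forward integral produces three contributions. The martingale contribution is $\int_0^t \langle DF(r,\X_r), d^- \M_r\rangle = \int_0^t \langle DF(r,\X_r), \sigma_r d\W^Q_r\rangle$ by item 3. of Remark \ref{RScalarCase} and Proposition \ref{PChainRule-2}. The absolutely continuous drift contributes the Bochner integral $\int_0^t \langle DF(r,\X_r), b_r\rangle\,dr$ (item 2. of Remark \ref{RScalarCase}). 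The remaining contribution $\int_0^t \langle DF(r,\X_r), d^- \A_r\rangle$ must be shown to equal $\int_0^t \langle A^* DF(r,\X_r), \X_r\rangle\,dr$, which is the genuinely new point of the proof.

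This last identification is the main obstacle, because $\A$ has bounded variation only as a $\bar \nu_0$-valued process and its increments are known only through the duality relation \eqref{eq:properA}, $\langle \A_t, \phi\rangle = \int_0^t \langle \X_r, A^*\phi\rangle\,dr$ for $\phi \in \nu_0$. The plan is to approximate the continuous $\nu_0$-valued integrand $r \mapsto DF(r,\X_r)$ by piecewise constant functions; for such a step integrand the Lebesgue--Stieltjes integral against $d\A$ is a sum $\sum_i \langle DF(t_i,\X_{t_i}), \A_{t_{i+1}} - \A_{t_i}\rangle = \sum_i \int_{t_i}^{t_{i+1}} \langle \X_r, A^* DF(t_i,\X_{t_i})\rangle\,dr$ by \eqref{eq:properA}. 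Passing to the limit, using that $r \mapsto A^* DF(r,\X_r)$ is continuous in $H$ (because $DF$ is continuous into $D(A^*)$ endowed with the graph norm) and that $\X$ has a.s. continuous, hence locally bounded, paths, yields $\int_0^t \langle \X_r, A^* DF(r,\X_r)\rangle\,dr$. Collecting the three contributions with the second order term computed above gives \eqref{eq:first-Dinkin} and completes the proof.
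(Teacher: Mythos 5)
Your proposal is correct and follows essentially the same route as the paper's proof: both apply Proposition \ref{P69} with $\nu_0 = D(A^*)$ and $\chi=\nu_0\hat\otimes_\pi\nu_0$, verify hypothesis (i) with $C_t=\int_0^t(\sigma_rQ^{1/2})(\sigma_rQ^{1/2})^*dr$ via Proposition \ref{PConv} item 4. and Proposition \ref{CovMart}, split the forward integral along the decomposition $\M+\int_0^\cdot b_r\,dr+\A$ of Proposition \ref{PConv} item 1., and reduce the second-order term to the trace expression through Lemma \ref{LCovMart} (of which your Lemma \ref{L49} is the packaged form). The single point where you take a slightly different path is the term $\int_0^t\langle DF(r,\X_r),d^-\A_r\rangle$: the paper evaluates the $\varepsilon$-regularization directly, using \eqref{eq:properA} to write $\frac{1}{\varepsilon}\int_0^t\langle\Gamma(r,\X_r),\A_{r+\varepsilon}-\A_r\rangle\,dr=\frac{1}{\varepsilon}\int_0^t\int_r^{r+\varepsilon}\langle\X_u,A^*\Gamma(r,\X_r)\rangle\,du\,dr$ and concluding by Fubini plus continuity, whereas you first identify the forward integral with the $\bar\nu_0$-valued Lebesgue--Stieltjes integral and then compute that integral by Riemann-sum approximation; both arguments rest on the same ingredients (relation \eqref{eq:properA}, continuity of $r\mapsto A^*DF(r,\X_r)$ in $H$ from the graph-norm continuity of $DF$, and boundedness of the paths of $\X$), so this is a cosmetic rather than substantive difference.
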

\begin{proof}
It is a consequence of Proposition \ref{P69} using Proposition \ref{PConv}
as follow.
Let $\chi = \nu_0 \hat \otimes_\pi \nu_0$ with 
$ \nu_0 = D(A^\ast)$.

Indeed, thanks to item 4. of Proposition \ref{PConv}, $\X$ admits a
 $\chi$-quadratic variation.
Consider the decomposition $\M + \V$ defined in item 1. of Proposition \ref{PConv}.
We first check that hypothesis (ii) of Proposition \ref{P69} is satisfied. 
$\int_0^t \langle \Gamma(r, \X_r), d^- \M_r \rangle$
(resp. $\int_0^t \langle \Gamma(r, \X_r), d^- \int_0^\cdot b_r dr  \rangle$)
exists and it equals the It\^o type integral 
\begin{equation} \label{AA10}
\int_0^t \langle \Gamma(r, \X_r), d \M_r \rangle, \  t \in [0,T],
\end{equation}
(resp. 
\begin{equation} \label{AA11}
\int_0^t \langle \Gamma(r, \X_r),  b_r \rangle dr,  \  t\in [0,T]) .
\end{equation}
This happens because of items 2. and 3. of Remark \ref{RScalarCase}.
Consequently 
%\begin{equation} \label{AA12}
$\int_0^t \langle \Gamma(r, \X_r), d^- \X_r \rangle$
%\end{equation}
exists  if 
\begin{equation}
\label{ForwI-4}
\int_0^t \left\langle \Gamma(r,\X_r), d^- \A_r \right\rangle, \qquad t\in [0,T],
\end{equation}
exists and it equals the sum of \eqref{AA10}, \eqref{AA11}
and  \eqref{ForwI-4}. We recall that $\A$
was defined  in Proposition \ref{PConv}. \\
 So let  us show that \eqref{ForwI-4} exists.
For every $t\in [0,T]$, using (\ref{eq:properA}), we evaluate 
limit of its  $\epsilon$-approximation, taking into account item 1. of Proposition \ref{PConv}.
Denoting  $R(\epsilon,t)$
 a remainder boundary term which converges a.s. to zero, we get
\begin{multline}
\label{eq:era38}
\frac{1}{\epsilon}\int_0^t \left\langle \Gamma(r,\X_r), \A_{r+\epsilon} - 
\A_r \right\rangle d r
=  \frac{1}{\epsilon}\int_0^t \int_{r}^{r+\epsilon} \left\langle \X_u, A^*\Gamma(r,\X_r) \right\rangle d u d r \\
=  \frac{1}{\epsilon}\int_0^t \int_{u-\epsilon}^{u} \left\langle \X_u, A^*\Gamma(r,\X_r) \right\rangle d r d u 
+  R(\epsilon,t)
\xrightarrow{\epsilon \to 0} \int_0^t \left\langle \X_u, A^*\Gamma(u,\X_u) \right\rangle d u.
\end{multline}
The validity of hypothesis (i) comes out setting  
$C_t = \int_0^t (\sigma_r Q^\frac{1}{2}) (\sigma_r Q^\frac{1}{2})^\ast dr.$
It holds because $\X$ is a $\bar \nu_0$-semimartingale, taking into account Proposition \ref{P67} 4. and item 1. of Proposition \ref{CovMart}.
 Expression \eqref{eq:first-Dinkin}  results now from \eqref{eq:Ito}.
The first integral of the right-hand side of \eqref{eq:Ito}
gives the second and third 
 integrals of  \eqref{eq:first-Dinkin}.
Those  are obtained
differentiating $\M$ and $\int_0^\cdot b_r dr$, using Remark \ref{RScalarCase} 2., 3. and Proposition \ref{PChainRule-2}.
The fourth integral comes from \eqref{eq:era38} choosing
$ \Gamma = DF$.
 Finally the last integral  in (\ref{eq:first-Dinkin}) comes from the 
third addendum of (\ref{eq:Ito}),
taking into account (\ref{EE3})  and Lemma \ref{LCovMart}
with $j = C$ and ${\dot g}(r) = D^2 F(r,\X_r)$.
\end{proof}

The  theorem below  operates  as a substitute of a non-smooth It\^o formula.
It is a stability of $\nu$-weak Dirichlet processes, which was the object of Theorem 4.2 of \cite{RusFab}.

\begin{thm} 
\label{th:prop6}
Let $H$ be a separable Hilbert space. 
Let $\nu_0$ be a dense subset of $H^\ast$. We set $\nu = \nu_0 \otimes \R$ 
and $\chi = \nu_0 \hat \otimes_\pi \nu_0$.
Let    $F \in C^{0,1}([0,T] \times H)$ such that $DF$ is continuous from 
$[0,T] \times H$ to $\nu_0$.
Let $\X = \M + \V$ be a $({\mathscr F}_t)-\nu$-weak Dirichlet process and we suppose that
$\X$ has a  $\chi$-quadratic variation. \\
Then $(F(t, \X_t)) $ is a real  $({\mathscr F}_t)$-weak Dirichlet process
with martingale part $M^u$
where
$$ M^F_t = F(0, \X_0) + \int_0^t \langle D F(r, \X_r), d\M_r \rangle_H.$$
\end{thm}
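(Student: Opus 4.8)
The plan is to adapt the finite-dimensional strategy of Gozzi and Russo \cite{rg2}: exhibit $F(\cdot,\X)$ as the sum of the explicit local martingale $M^F$ and a remainder which is martingale orthogonal, and then invoke the uniqueness of the weak Dirichlet decomposition. First I would check that $M^F$ is a well-defined continuous real $(\mathscr{F}_t)$-local martingale. Since $F\in C^{0,1}$ with $DF$ continuous into $\nu_0\subset H^\ast$, the process $r\mapsto DF(r,\X_r)$ is predictable and, being evaluated along the continuous path $\X$, locally bounded; hence $\int_0^t {}_{H^\ast}\langle DF(r,\X_r), d\M_r\rangle_H$ is meaningful (Remark \ref{R24}) and is a local martingale by Proposition \ref{PChainRule-1}, the integrability requirement \eqref{EChainRule} holding after localization because $\M$ admits a scalar quadratic variation. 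Setting $A^F := F(\cdot,\X)-M^F$ one has $A^F_0=0$. By the uniqueness statement of Proposition \ref{rm:ex418}, it then suffices to prove that $A^F$ is $(\mathscr{F}_t)$-martingale orthogonal, i.e. that $[A^F,N]=0$ for every continuous local martingale $N$; this simultaneously identifies $M^F$ as the martingale part of $F(\cdot,\X)$.

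The core step is the covariation identity
\[
[F(\cdot,\X),N]_t = \int_0^t {}_{H^\ast}\langle DF(r,\X_r), d[\X,N]^\nu_r\rangle, \qquad t\in[0,T],
\]
where $[\X,N]^\nu$ denotes the $\nu$-covariation of the $H$-valued $\X$ with the real martingale $N$. To establish it I would compute the regularized covariation $\tfrac1\epsilon\int_0^t(F(s+\epsilon,\X_{s+\epsilon})-F(s,\X_s))(N_{s+\epsilon}-N_s)\,ds$ and split the increment of $F$ into a spatial part $F(s,\X_{s+\epsilon})-F(s,\X_s)=\int_0^1{}_{H^\ast}\langle DF(s,\X_s+a(\X_{s+\epsilon}-\X_s)),\X_{s+\epsilon}-\X_s\rangle_H\,da$, obtained from the Fr\'echet mean value theorem (valid since $F$ is $C^1$ in space), and a temporal part $F(s+\epsilon,\X_{s+\epsilon})-F(s,\X_{s+\epsilon})$. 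In the spatial part the $\nu_0$-valued factor $DF(s,\X_s+a(\cdots))$ is replaced by $DF(s,\X_s)$ up to a remainder governed by the modulus of continuity of $DF$; letting $\epsilon\downarrow0$ reproduces the right-hand side.

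Finally I would decompose $[\X,N]^\nu=[\M,N]^\nu+[\V,N]^\nu$. Since $\V$ is $\nu$-martingale orthogonal and $DF(r,\X_r)\in\nu_0=\nu$, the contribution of $[\V,N]^\nu$ vanishes --- this is exactly where the $\nu$-weak Dirichlet hypothesis enters --- while the $[\M,N]^\nu$ term, paired with $DF(r,\X_r)$, equals $[M^F,N]_t$ by bilinearity of the covariation and the definition of $M^F$. Subtracting gives $[A^F,N]=[F(\cdot,\X),N]-[M^F,N]=0$, as required.

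The main obstacle is the rigorous justification of the covariation identity for a merely $C^{0,1}$ functional in infinite dimensions, and it has two delicate faces. The temporal term $\tfrac1\epsilon\int_0^t(F(s+\epsilon,\X_{s+\epsilon})-F(s,\X_{s+\epsilon}))(N_{s+\epsilon}-N_s)\,ds$ cannot be disposed of by a pathwise bound, since no rate is available for the time-modulus of continuity of $F$; one must instead exploit the martingale nature of $N$, estimating second moments where the mean of the martingale increment vanishes. The spatial remainder is harder still: the martingale-orthogonal part $\V$ need not possess a scalar quadratic variation, so the quantity $\tfrac1\epsilon\int_0^t|\X_{s+\epsilon}-\X_s|_H^2\,ds$ is not available to control it, and one must keep the pairing against $\nu_0$-valued test elements throughout the regularization, relying on the existence of the $\chi$-quadratic variation of $\X$ (equivalently of $\M$). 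Handling these two points uniformly is the technical heart of Theorem 4.2 of \cite{RusFab}.
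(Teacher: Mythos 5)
Your proposal is correct and follows essentially the same route as the paper, which does not prove the theorem itself but refers to Theorem 4.2 of \cite{RusFab}: the decomposition $F(\cdot,\X)=M^F+A^F$, the covariation identity obtained by regularization with a Taylor split into temporal and spatial increments, the use of the $\nu$-martingale orthogonality of $\V$ and the $\chi$-quadratic variation to control the remainders, and the conclusion via uniqueness of the weak Dirichlet decomposition (Proposition \ref{rm:ex418}). You have also correctly located the two genuinely delicate estimates that constitute the technical core of the cited proof.
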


\section{Calculus with respect to window processes}

\label{Swindow}

Let $X$ be a real (continuous) process such that $[X]_t \equiv \psi(t)$
with $\psi(t) = \sigma^2 t $,
$\sigma \ge 0$. Let $B = C([-T,0])$.
Let $u: [0,T] \times B \rightarrow \R$ be of class $C^{0,1} ([0,T[ \times B)$. 
For $t \in [0,T[, \eta \in B$, we set 
$$ D^{\delta_0} u (t,\eta) = Du(t,\eta)({0}), \
D^\perp u(t,\eta) = Du(t,\eta) -  Du(t,\eta)({0}).$$
In this section, for $t \in [0,T]$, we will  denote again
\begin{equation} \label{D_t}
 D_t := \left \{ (x,y) \in [-t,0]^2 \vert x=y \right \}. 
\end{equation}

\subsection{The case of {\it vanilla} random variables}

\label{Svanilla}

A window diffusion $X$ is naturally 
related to an infinite dimensional Kolmogorov type equation.
But in fact this link remains valid when the process is $X$
 is 
a non-semimartingale with the same quadratic variation.
Let us concentrate on the case of a (non-necessarily semimartingale)
 process $X$ such that $[X]_t = \sigma^2t, t \in [0,T]$, for 
$\sigma \ge 0$.

In order to motivate the discussion, we start with the simple representation  
 a r.v. of the type $h= f(X_T)$
where $f: \R \rightarrow \R$ is continuous with polynomial growth.
We  suppose the existence of
 $v\in C^{1,2}([0,T[\times \R)\cap C^{0}([0,T]\times \R)$ such that 
$$
\left\{
\begin{array}{l}
\partial_{t}v(t,x)+\frac{\sigma^2}{2}\partial^2_{xx}v(t,x)=0\\
v(T,x)=f(x).
\end{array}
\right.
$$
Then
$$
h:=f(X_{T})=v(0,X_{0})+ \int_{0}^{T} 
\partial_{x} v(s,X_{r}) d^{-}X_{r},
$$
where previous integral is an improper forward integral.
That result appeared in \cite{schoklo},  \cite{z02}.
The proof can be easily formulated through Proposition \ref{PITO}. 
Later on,  generalizations were performed in the case of
 Asiatic options and other classes in \cite{crnsm, VaSoBen} 
and \cite{crnsm2}, which also considers r.v. of the type
   $h = f(X_{t_0}, \ldots, X_{t_N})$, for $ 0 = t_0 < \ldots  < t_N = T$.

The natural question concerns the validity of a similar
 formula when $h$ is path dependent.

Previous toy model can be revisited using infinite dimensional calculus  via regularization. 
\begin{prop}
We set again  $B=C([-T,0])$ and $\eta\in B$ and we define
$G:B \longrightarrow \R$, by $G(\eta):=f(\eta(0))$ 
and $u:[0,T]\times B \longrightarrow \R$, by $u(t,\eta):=v(t,\eta(0))$.
Then $u\in C^{1,2}\left( [0,T[\times B ;\R \right)\cap 
C^{0}\left( [0,T]\times B ;\R \right)$ and  it solves
\begin{equation} \label{Evanilla} 
\left \{
\begin{array}{ccc}
\partial_{t}u(t,\eta)&+&\frac{\sigma^2}{2}\langle D^{2}u\,(t,\eta)\; ,\;
 1_{D_t}\rangle=0, \ (t,\eta) \in [0,T| \times B,   \\
u(T,\eta)  &=& G(\eta), \ \eta \in B.  
\end{array}
\right.
\end{equation}
\end{prop}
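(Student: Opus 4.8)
The plan is to reduce the infinite-dimensional statement to the finite-dimensional heat equation satisfied by $v$ through a direct chain-rule computation of the Fr\'echet derivatives of $u$. First I would record that the evaluation functional $\eta \mapsto \eta(0) = \langle \delta_0, \eta \rangle$ is linear and bounded on $B = C([-T,0])$, since $|\eta(0)| \le |\eta|_\infty$; hence it is of class $C^\infty$, with constant first derivative equal to $\delta_0 \in \mathcal{M}([-T,0]) = B^\ast$ and vanishing higher derivatives. Writing $u(t,\eta) = v(t,\eta(0))$ and applying the chain rule together with the regularity $v \in C^{1,2}([0,T[\times \R) \cap C^0([0,T]\times \R)$, I obtain
\[
\partial_t u(t,\eta) = \partial_t v(t,\eta(0)), \qquad Du(t,\eta) = \partial_x v(t,\eta(0))\, \delta_0,
\]
and, because the evaluation map is linear and so contributes no second-order term,
\[
D^2 u(t,\eta) = \partial^2_{xx} v(t,\eta(0))\, \delta_0 \otimes \delta_0 .
\]

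Next I would verify the regularity claim. Each of the three displays above is continuous in $(t,\eta)$ on $[0,T[\times B$ as a composition of the continuous evaluation map with the continuous partial derivatives of $v$, and $u$ itself extends continuously to $[0,T]\times B$ from $v \in C^0([0,T]\times\R)$; this yields $u \in C^{1,2}([0,T[\times B;\R)\cap C^0([0,T]\times B;\R)$. Moreover $D^2 u(t,\eta)$ lies in the one-dimensional Chi-subspace $\mathcal{D}_{0,0}$ and depends continuously on $(t,\eta)$ there, in accordance with item 1 of Example \ref{EXE}.

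It then remains to evaluate the second-order term of the equation. Interpreting $D^2 u(t,\eta)$ as the measure $\partial^2_{xx} v(t,\eta(0))\, \delta_{(0,0)}$ on $[-T,0]^2$ and pairing it, in the sense of $\chi/\chi^\ast$ duality, against the indicator $1_{D_t}$ of the diagonal $D_t = \{(x,y)\in[-t,0]^2 : x=y\}$ gives
\[
\langle D^2 u(t,\eta), 1_{D_t} \rangle = \partial^2_{xx} v(t,\eta(0))\, 1_{D_t}(0,0) = \partial^2_{xx} v(t,\eta(0)),
\]
the last equality holding because the support point $(0,0)$ lies on $D_t$ for every $t\in[0,T]$. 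Substituting this and $\partial_t u(t,\eta)=\partial_t v(t,\eta(0))$, the left-hand side of \eqref{Evanilla} becomes
\[
\partial_t v(t,\eta(0)) + \frac{\sigma^2}{2}\, \partial^2_{xx} v(t,\eta(0)),
\]
which vanishes since $v$ solves the heat equation; the terminal condition follows at once from $u(T,\eta) = v(T,\eta(0)) = f(\eta(0)) = G(\eta)$.

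The computation is essentially routine once the chain rule is applied; the only point needing care is the bookkeeping of the pairing $\langle D^2 u(t,\eta), 1_{D_t}\rangle$, namely checking that the point mass $\delta_0 \otimes \delta_0$ sits on the diagonal so that $1_{D_t}$ contributes the factor one rather than zero. This is precisely the mechanism by which the single finite-dimensional second-order coefficient is recovered from the infinite-dimensional diagonal term, consistently with the window quadratic-variation formula of Proposition \ref{PChiCalculation}.
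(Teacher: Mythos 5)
Your proof is correct and follows essentially the same route as the paper's: compute $\partial_t u$, $Du=\partial_x v\,\delta_0$ and $D^2u=\partial^2_{xx}v\,\delta_0\otimes\delta_0$ by the chain rule through the bounded linear evaluation functional, observe that pairing $\delta_0\otimes\delta_0$ with $1_{D_t}$ yields the factor one since $(0,0)\in D_t$, and reduce to the heat equation satisfied by $v$ together with the immediate terminal condition. Your additional remarks on continuity of the derivatives and on $D^2u$ landing in $\mathcal{D}_{0,0}$ only make explicit what the paper declares to be obvious.
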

\begin{proof}
The final condition is obviously verified since 
$u(T,\eta)=v(T,\eta(0))=f(\eta(0))=G(\eta)$ for all $\eta \in B$. Moreover 
$u$ is obviously of class $C^{1,2}([0,T[ \times B) \cap C^{0}([0,T] \times B)$
 and
 $ \partial_{t}u\,(t,\eta)=\partial_{t}v\,(t,\eta(0))$; also $ D u\,(t,\eta)
=\partial_{x}v\,(t,\eta(0))\;\delta_{0} $
and $ D^{2} u\,(t,\eta) =\partial_{x\, x}^{2}v\,(t,\eta(0))\; 
\delta_{0}\otimes \delta_{0}$.
Finally $ \partial_{t}u\,(t,\eta)+\frac{\sigma^2}{2}
D^{2}u\,(t,\eta)(D_t)=0 $.
\end{proof}

Suppose that $u:[0,T] \times B \rightarrow \R$
is of class $C^{0,1}([0,T[ \times B)$. 
A quantity which will play a role in the sequel is 
the deterministic forward integral 
 $\int_{]-t,0]} D_{dx}^{\perp} u(t,\eta) d^- \eta(x)$, 
see Section \ref{sdetint}.

Suppose that for a given $(t,\eta)$, $D_{dx}^{\perp} u(t,\eta)$ is absolutely
continuous, we denote by $x \mapsto D_x^{\mathrm ac} u(t,\eta)$  
 the corresponding derivative.
If moreover  
$x \mapsto D_x^{\mathrm ac} u(t,\eta)$ has bounded variation, then  
previous deterministic integral exists and, 
$$\int_{]-t,0]} D_{dx}^{\perp} u(t,\eta) d^- \eta(x)  = \eta(0) 
D_x^{ac} u(t,\eta)(\{0\}) - \int_{]-t,0]} \eta(x) D_{dx}^{ac} u(t,\eta(x)),$$
because of Remark \ref{BVI} 2.

In the toy model mentioned above, that integral  is clearly zero since
 $D^{\perp} u$ is identically zero.

\subsection{It\^o formulae for window processes}
\label{SitoWindow}

The It\^o formula stated  in 
Theorem \ref{TITOF} can be particularized for the case when
$\X = X(\cdot)$ is a window process (with $\tau = T$), associated with a finite quadratic variation process $X$.
We recall that $\X$ admits a $\chi_0$-quadratic variation, where $\chi_0$ is the
 Chi-space of signed measures on $[-T,0]^2$ introduced in item 6. before Proposition \ref{PChiCalculation}.
In particular Theorem \ref{TITOF} applies, so that  integral \eqref{FIFOF} exists and it it decomposes in the sum
\begin{equation} \label{ISum}
\int_0^t D^{\delta_0} F(r,\X_r) d^- X_r +   \int_{0}^{t} {}_{B^{\ast}} {\langle}  
D^\perp F (r,\X_r),  d^- \X_r\rangle_B,
\end{equation}
where $ D^{\delta_0} F$ and $ D^{\perp} F$ were defined at the beginning of  Section \ref{Swindow},
provided that at least one of the two addends exist.
\begin{rem} \label{R111}
\begin{enumerate}
\item The second term is the limit in probability of the expression
\begin{equation} \label{E112}
 \int_0^t dr   \int_{-r}^0 D^\perp_{dx} F(r, X_r) \frac{X_{r + x + \varepsilon} -
 X_{r+x}}{\varepsilon},
 \end{equation} 
when $\varepsilon$  goes to zero.
\item If $X$ is a semimartingale, then the first integral is the It\^o integral
$ \int_s^t D^{\delta_0} F(r, \X_r) dX_r$. Consequently the second one is forced to exist.
\item If $X$ is not necessarily a semimartingale, sufficient conditions for its 
existence can be provided. Suppose that  the deterministic quadratic variation
of almost all path of $X$ exists. In particular $[X]$ exists as an
 increasing real process.
%QUESTA IPOTESI PUO' FORSE ESSERE MIGLIORATA PASSANDO ALLE SUCCESIONI
 In this case a sufficient condition for the existence of the second integral 
in \eqref{ISum}, is the realization of
 following Condition related to $F$.
 We recall that the space $V_{2,\psi}$, 
 for a fixed  increasing continuous function
$\psi: [0,T] \rightarrow \R$ such that $\psi(0)=0$,
 was defined at Section \ref{sdetint}.
$B$ denotes here $C([-T,0])$.
\end{enumerate}
\end{rem}
\begin{dfn}\label{ConditionC}
A continuous function $u:[0,T] \times B \rightarrow \R$
of class $C^{0,1}([0,T[ \times B)$ is said to fulfill 
{\bf Condition} (C) (related to $\psi$) if the following holds.
\begin{enumerate}
\item For each $t \in [0,T], \eta \in V_{2,\psi}$, 
% the measure $D_{dx}^\perp u(t,\eta)$ is absolutely continuous and
the deterministic integral 
\begin{equation} \label{DetInt}
%\int_{[-t,0]}
 \int_{|-t,0]}   D_{x}^{\perp}  u(t,\eta) d^- \eta(x)
\end{equation}
 exists.
%where $ (D_{x}^{ac}  u(t,\eta))$ is the density towards Lebesgue measure.
\item For any $\varepsilon > 0, t \in [0,T], \eta \in B$, 
we denote
\begin{equation} \label{39bis}
 I(t, \eta,  \varepsilon) :=
%\int_{[-t,0]} 
\int_{-t}^0 D_{x}^{\perp}  u(t,\eta) \frac{\eta(x+\varepsilon) -
 \eta(x)}{\varepsilon} dx.
\end{equation}
We suppose the existence of $J: [0,T] \times V_{2,\psi} \rightarrow \R_+$ such
 that
$ \vert I(t, \eta,  \varepsilon) \vert \le  J(t,\eta), \forall \eta \in V_{2,\psi}$
and such that for each  compact $K$  of $B$ included in $ V_{2,\psi}$,
$ 
%\int_{-T}^0
\int_{0}^T  \sup_{\eta \in K} J(s,\eta) ds < \infty $.
\end{enumerate}
\end{dfn}
\begin{rem} \label{RNew}
\begin{enumerate}
\item
As far as last point is concerned we remark that relatively compact subsets 
of $B$ are
very tiny.
\item Sufficient conditions for the validity of Condition (C) will be given below.
Clearly this condition implies the existence of the second integral of 
\eqref{ISum}.\\
In fact the set $\{ X_s (\cdot), s \in [-T,0] \}$ is compact 
in $B$ and included in $V_{2,\psi}$.
If Condition (C) is verified then 
$$ \vert I(s, X_s(\cdot),  \varepsilon) \vert \le \sup_{\eta \in K}  J(s, \eta), 
$$
where $K = K(\omega) = \{ X_s (\cdot), s \in [-T,0] \}$. The result follows by
Lebesgue dominated convergence theorem.    
\item
\end{enumerate}

\end{rem}
A sufficient condition for the realization of Condition (C) is given below.
This will be a consequence  of integration by parts
and  It\^o chain rule  \eqref{ItoChain} in Proposition \ref{ITOFQV},
expressed in the context of 
deterministic calculus via regularization.

\begin{lem} \label{LTechnical}
Suppose the  existence of continuous maps
$F_i: [0,T] \times B \times [-T,0] \rightarrow \R$,
$G_i: [0,T] \times [-T,0] \times \R \rightarrow \R$, $1 \le i \le N$,
such that $D_{dx}^\perp u(t,\eta) $ is absolutely continuous and
$D_{dx}^{\perp} u(t,\eta) = D_{x}^{ac} u(t,\eta) dx
 = \sum_{i=1}^N F_i(t,\eta,x) G_i(t, x, \eta(x)) dx$, 
fulfilling the following properties for any
subset $K$  of $V_{2,\psi}$ such that 
\begin{equation} \label{FKK}
\sup_{\eta \in K} \Vert \eta \Vert_{2,\psi}  < \infty  \text{ and }
 K \text{ it is a compact subset of } 
B.
\end{equation}
For every $ 1 \le i \le N$, for any such $K$, we suppose the following.  
\begin{itemize}
\item For any $(t,\eta) \in [0,T] \times V_{2,\psi}$, 
  $F_i(t,\eta, \cdot)$ has bounded variation.
\item $(t,\eta) \mapsto \Vert F_i(t,\eta,\cdot) \Vert_{var}$
is   bounded on $[0,T] \times K$. 
\item $ G_i \in C^{0,1}([0,T] \times ([-T,0] \times  \R))$.
%for all $1 \le i \le N$. 
\end{itemize}
Then $u$ fulfills  Condition (C) with respect to $\psi$.
\end{lem}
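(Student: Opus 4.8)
The plan is to verify separately the two requirements of \textbf{Condition} (C) in Definition \ref{ConditionC}, working termwise. Since both the deterministic forward integral \eqref{DetInt} and the quantity $I(t,\eta,\varepsilon)$ of \eqref{39bis} are linear in the decomposition $D^{ac}_x u(t,\eta)=\sum_{i=1}^N F_i(t,\eta,x)G_i(t,x,\eta(x))$, it suffices to treat a single summand; throughout, $t$ is fixed and $\eta\in V_{2,\psi}$, so that $[\eta]=\psi$. The first step is to upgrade $G_i$ from $C^{0,1}$ to an object to which the It\^o chain rule applies: I would introduce the primitive $H_i(t,x,z):=\int_0^z G_i(t,x,w)\,dw$. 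Then $\partial_z H_i=G_i$, $\partial^2_{zz}H_i=\partial_z G_i$ and $\partial_x H_i(t,x,z)=\int_0^z\partial_x G_i(t,x,w)\,dw$ are all continuous (the last one by differentiation under the integral sign, using that $G_i\in C^{0,1}$ and dominated convergence), so $(x,z)\mapsto H_i(t,x,z)$ is of class $C^{1,2}$ with $x$ in the role of time. Applying the deterministic form of the It\^o chain rule (Theorem \ref{ITOFQV} and Proposition \ref{PITO}, read through Remark \ref{BVI} 3.) along the path $\eta$ on each interval $]s,0]$ then shows that $\int_{]s,0]}G_i(t,x,\eta(x))\,d^-\eta(x)$ exists and equals $H_i(t,0,\eta(0))-H_i(t,s,\eta(s))-\int_{]s,0]}\partial_x H_i(t,x,\eta(x))\,dx-\tfrac12\int_{]s,0]}\partial_z G_i(t,x,\eta(x))\,d\psi(x)$.

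Next I would reinstate the bounded variation factor $F_i(t,\eta,\cdot)$ by integration by parts. Writing $F_i(t,\eta,x)=F_i(t,\eta,-t)+\int_{]-t,x]}F_i(t,\eta,ds)$, where $F_i(t,\eta,ds)$ is the signed Stieltjes measure of the $BV$ function $F_i(t,\eta,\cdot)$, Fubini's theorem recasts the $\varepsilon$-approximation of $\int_{]-t,0]}F_iG_i\,d^-\eta$ as $F_i(t,\eta,-t)$ times the approximation of $\int_{]-t,0]}G_i\,d^-\eta$, plus $\int_{]-t,0]}F_i(t,\eta,ds)$ applied to the approximation $A_\varepsilon(s)$ of $\int_{]s,0]}G_i\,d^-\eta$. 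Letting $\varepsilon\to0$, the pointwise-in-$s$ convergence of the previous step together with dominated convergence against the finite measure $|F_i(t,\eta,ds)|$ yields existence of $\int_{]-t,0]}F_iG_i\,d^-\eta$, which is requirement 1. of \textbf{Condition} (C).

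The quantitative input that legitimises this dominated convergence and simultaneously produces the function $J$ is a bound on $A_\varepsilon(s)$ uniform in both $\varepsilon$ and $s$; this is the main obstacle. I would extract it from the second–order Taylor expansion of $H_i$, namely the identity
\[
G_i(t,x,\eta(x))\,(\eta(x+\varepsilon)-\eta(x))=\big[H_i(t,x+\varepsilon,\eta(x+\varepsilon))-H_i(t,x,\eta(x))\big]-\big[H_i(t,x+\varepsilon,\eta(x+\varepsilon))-H_i(t,x,\eta(x+\varepsilon))\big]-\tfrac12\partial_z G_i(t,x,\xi)\,(\eta(x+\varepsilon)-\eta(x))^2,
\]
valid for a mean–value point $\xi=\xi(x,\varepsilon)$. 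Integrating over $]s,0]$ and dividing by $\varepsilon$ rewrites $A_\varepsilon(s)$ as a telescoping term (bounded by $2\sup|H_i|$), an $x$-increment term (bounded by $T\sup|\partial_x H_i|$, since the inner bracket is $\int_x^{x+\varepsilon}\partial_x H_i\,dy$), and a quadratic term bounded by $\tfrac12\sup|\partial_z G_i|\,\Vert\eta\Vert_{2,\psi}$, using $\int_{-T}^0\frac{(\eta(x+\varepsilon)-\eta(x))^2}{\varepsilon}\,dx\le|\eta|_{2,var}\le\Vert\eta\Vert_{2,\psi}$. All suprema are over the compact set $[0,T]\times[-T,0]\times[-|\eta|_\infty,|\eta|_\infty]$. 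Hence $|A_\varepsilon(s)|\le C_i(t,\eta)$ uniformly, and I may set $J(t,\eta):=\sum_{i=1}^N\big(|F_i(t,\eta,-t)|+\Vert F_i(t,\eta,\cdot)\Vert_{var}\big)\,C_i(t,\eta)$, which dominates $|I(t,\eta,\varepsilon)|$ for every $\varepsilon$.

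Finally, for requirement 2. I would check $\int_0^T\sup_{\eta\in K}J(s,\eta)\,ds<\infty$ for any $K\subset V_{2,\psi}$ satisfying \eqref{FKK}. On such a $K$ one has $R:=\sup_{\eta\in K}|\eta|_\infty<\infty$ by compactness in $B$ and $\sup_{\eta\in K}\Vert\eta\Vert_{2,\psi}<\infty$ by hypothesis; the continuity of $G_i,\partial_x G_i,\partial_z G_i$ makes the relevant sup-norms over $[0,T]\times[-T,0]\times[-R,R]$ finite, so $C_i(s,\eta)$ is bounded uniformly over $[0,T]\times K$, while $\sup_{[0,T]\times K}\Vert F_i(s,\eta,\cdot)\Vert_{var}<\infty$ is precisely the standing assumption. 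Thus $\sup_{\eta\in K}J(s,\eta)$ is bounded uniformly in $s$, its integral over $[0,T]$ is finite, and \textbf{Condition} (C) holds.
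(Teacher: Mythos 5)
Your proof is correct and follows essentially the same route as the paper's: your primitive $H_i$ is the paper's $\tilde G$, the deterministic It\^o chain rule supplies part 1 of Condition (C), and your three-term Taylor bound (telescoping term, $\partial_x$-increment term, quadratic term controlled by $\Vert \eta \Vert_{2,\psi}$) reproduces the paper's estimates of $I_1$, $I_2$, $I_3$. The only organizational difference is that you fold the bounded variation factor $F_i$ in by Abel summation against the Stieltjes measure $F_i(t,\eta,ds)$ at the level of the $\varepsilon$-approximations, whereas the paper takes $Z=F_i$ directly as the integrand in the chain rule and performs the summation by parts inside the term $I_1$; the resulting bounds and the final choice of $J$ are the same.
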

\begin{proof}
By additivity we can reduce to the case $N = 1$ and we set $F = F_1$, $G = G_1$.
Let $K$ be a subset of $V_{2,\psi}$ such that \eqref{FKK} is fulfilled. \\
 Let $F: [0,T] \times V_{2,\psi} \times [-T,0] \rightarrow \R$
be measurable such that for every $t$  and $\eta$ we suppose that 
$x \mapsto F(t,\eta,x)$ 
has bounded variation  and $(t,\eta) \mapsto \Vert F_i(t,\eta,\cdot) \Vert_{var}$
is   bounded on $[0,T] \times K$.
 Let  $ G: [0,T] \times [-T,0] \times \R \rightarrow \R$ of class 
$C^{0,1}([0,T] \times ([-T,0] \times \R))$.
Let $\eta \in V_{2,\psi}$ and set
 $\tilde G: [0,T] \times [-T,0] \times \R \rightarrow \R$  the
 primitive defined 
by $\tilde G(t, x, y) = \int_{0}^y G(t,x, \tilde y) d \tilde y$. 
By formula \eqref{ItoChain} in Theorem \ref{ITOFQV}
and Remark \ref{BVI} 3.,  we obtain 
\begin{eqnarray} \label{FL2}
\int_{]-t,0]} 
% \int_{-t}^0 
F(t,\eta, x) G(t,x,\eta(x))d^- \eta(x) &=&  
\int_{]-t,0]}
%\int_{-t}^0
   F(t,\eta, x) d^-_x \tilde G(t,x,\eta(x))  
 -  \frac{1}{2} 
%\int_{-t}^0 
\int_{]-t,0]}
F(t,\eta, x) \partial_{\eta(x)} 
 G(t,x,\eta(x)) d[\eta](x) \nonumber\\
& & \\
&-& \int_{]-t,0]}  F(t,\eta, x) \partial_{x} \tilde G(t,x,\eta(x)) dx, 
 \nonumber 
\end{eqnarray}
provided that the first integral after the equality symbol is well-defined.
By
% Proposition \ref{IBP},
Remark \ref{BVI} 2., that integral equals
$$ F(t,\eta,0-) \tilde G(t,0,\eta(0)) - 
 %F(t,\eta,-t) \tilde G(t,-t,\eta(-t)) -
 \int_{]-t,0]} F(t,\eta, dx)  \tilde G(t,x,\eta(x));$$
consequently 
item (a) of Condition (C) is fulfilled.
\\
In the sequel of the proof, for $\eta \in B$, we denote by 
$R_K(t, \eta, \varepsilon)$
a quantity such that for every $0 < \varepsilon < 1$,
$\sup_{t \in [0,T], \eta \in K} \vert R(t, \eta, \varepsilon) \vert \le C(T, K)$,
where $C(T,K)$ only depend on $T$ and $K$.
We denote 
\[
K_0 := \bigcup_{\eta \in K} Im (\eta),
\]
which is clearly a compact subset of $\mathbb{R}$.
We need to control the quantity
\begin{equation}
\label{A11}
\int_{-t}^0 F(t,\eta,x) G(t,x,\eta(x)) \frac{\eta(x+\varepsilon) - 
\eta(x)}{\varepsilon} dx.
\end{equation}
We set again $\tilde G(t,x,y) = \int_{0}^y G(t,x,\tilde y) 
d \tilde y$ for $t\in [0,T]$ and $x\in [-T,0], y \in \R$, so that 
 $\partial_y \tilde G(t,x,y) = G(t,x,y)$. By Taylor
 expansion (\ref{A11}) equals
\[
I_1(t,\eta,\varepsilon) - I_2(t,\eta,\varepsilon) - I_3(t,\eta,\varepsilon)
 + R_K(t,\eta,\varepsilon),
\]
where
\[
I_1(t,\eta,\varepsilon) := \int_{-t}^0 \frac{dx}{\varepsilon} 
F(t,\eta, x) \left ( \tilde G(t,x+\varepsilon,\eta(x + \varepsilon) ) - \tilde G(t,x,\eta(x) ) \right ),
\]
\[
I_2(t,\eta,\varepsilon) := \int_{-t}^0 \frac{dx}{\varepsilon}
 F(t,\eta,x) \partial_x \tilde G(t,x + a \varepsilon,\eta(x) ),
\]
\[
I_3(t,\eta,\varepsilon) := \frac{1}{2} \int_{-t}^0 \frac{dx}{\varepsilon}\, 
F(t,\eta,x) \int_0^1 da \partial_y \tilde G
%(t,x,\eta(x)) + \left ( \eta(x+\varepsilon) - \eta(x) \right )^2.
(t,x+a \varepsilon,\eta(x) + a (\eta(x+\varepsilon) - \eta(x))) 
\left ( \eta(x+\varepsilon) - \eta(x) \right )^2.
\]
$I_1(t,\eta,\varepsilon)$ equals
\begin{multline}
\int_{-t}^0 \frac{dx}{\varepsilon} \left ( F(t,\eta,x) - 
F(t,\eta, x-\varepsilon) \right ) \tilde G(t,x,\eta(x) ) + R_K(t,\eta,\varepsilon)\\
= \int_{[-t,0]} \frac{F(t, \eta, dy)}{\varepsilon} \int_y^{y+\varepsilon} dx
 \, \tilde G(t,x,\eta(x)) + R_K(t,\eta,\varepsilon).
\end{multline}
Consequently, for $0 < \varepsilon < 1$ we have
\begin{equation}\label{E53bis}
|I_1(t,\eta,\varepsilon)| \leq \sup_{\begin{subarray}{l}
                                      x\in[-T,0]\\
                                      y \in K_0\\
                                      t\in[0,T]
                                     \end{subarray} } 
|\tilde G(t,x,y)| \sup_{\eta\in K, t \in [0,T]} \| F(t,\eta,\cdot) \|_{var}
+  \sup_{\begin{subarray}{l}
                                      \eta \in K\\
                                      0 < \varepsilon < 1  \\
                                      t\in[0,T]
                                     \end{subarray} } 
% \sup_{\eta\in K, t \in [0,T],   }
  \vert R_K(t,\eta,\varepsilon) \vert  =: C_1;
\end{equation}
$I_2(t,\eta,\varepsilon)$ can be handled in similar
(but easier) way to $I_1$. There is a constant $C_2$ such that
\[
\sup_{\begin{subarray}{l}
              x\in[-T,0]\\
              \eta \in K\\
              t\in[0,T]
      \end{subarray} }
|I_2(t,\eta,\varepsilon)| \leq C_2,
\]
getting a similar estimate as in \eqref{E53bis},
but replacing $\tilde G$ with $\partial_x \tilde G$.
Concerning $I_3(t,\eta,\varepsilon)$, for $0 < \varepsilon < 1$, we have
\[
|I_3(t,\eta,\varepsilon)| \leq 
 \sup_{\begin{subarray}{l}
                                      x\in[-T,0]\\
                                      y \in K_0\\
                                      t\in[0,T]
                                     \end{subarray} } 
\left |\partial_y \tilde G(t,x,y) \right |
\sup_{\begin{subarray}{l}
              x\in[-T,0]\\
              \eta \in K
      \end{subarray} }
\vert F(t,\eta,x) \vert
 \, \left (  \sup_{\eta\in K} \| \eta \|_{2,\psi}\right ),
\]
%for some compact set $K_0$ depending on $K$. 
which is bounded because of  \eqref{FKK}.
Finally item (b) of condition (C) is also fulfilled.
\end{proof}

%Let $a < b$ two real numbers and we suppose $0 \in [a,b]$, without restriction of generality. 

\subsection{An infinite dimensional PDE}
\label{SIDPDE}

In this subsection again $B$ will stand for
$C([-T,0])$.
We are interested here in a class of functionals 
 $ G: B\longrightarrow \R$  such that the r.v. 
$
{ h:=G(X_{T}(\cdot))}    %\quad \mbox{Borel functional}
$
admits a representation 
\begin{equation} \label{EGenClark}
{ h=G_{0}+\int_{0}^{T}\xi_{s}d^{-}X_{s}, }
\end{equation}
where $G_0$ is a real number and $\xi$ is adapted
with respect to the canonical filtration $(\mathscr F_t)$ of $X$.
If $X$ is a classical Wiener process, and $h$ belongs to some suitable
Malliavin type Sobolev space, then $G_0 = E(h)$ and  Clark-Ocone formula 
says that $\xi$ in \eqref{EGenClark} is given by
 $\xi_t = \E(D^m_t h \vert {\mathscr F}_t), t \in [0,T]$.
% in \eqref{EGenClark}. 

In this section we want to show that the replication of a random variable
 $h = G(X(\cdot))$, is robust with respect to the quadratic variation
of $X$, for a large class of $G$; the
fact that the underlying process is distributed according to
Wiener measure is not so relevant.
We are indeed interested in a representation 
\eqref{EGenClark},
which formulates $G_0$ and $\xi$ through two functionals of $X$,
which do not depend on the specific model of $X$ such that
$[X]_t \equiv \sigma^2 t, t \in [0,T]$.

The methodology 
for expressing a {\it Clark-Ocone type formula} for finite quadratic variation
processes 
 consists in two steps.
\begin{enumerate}
\item We need to choose a functional $u: [0,T]\times B \longrightarrow \R$ which 
 solves the infinite dimensional PDE \eqref{eq SYST} with final condition $G$.
\item Using an It\^o type formula we establish a representation form
 \eqref{EGenClark}.
\end{enumerate}
The proposition below represents the second step of the
procedure.

Below $\psi$ will stand for $\psi(t) \equiv \sigma^2 t$,
for some $\sigma \ge 0$.
\begin{prop}  \label{TReprClark}
Let $X$ a process such that a.s. the limit $[X,X]$ in Definition
 \ref{D33} holds a.s. and gives $\psi$. 
Let    $u:[0,T]\times B \longrightarrow \R$ be a function of class 
$C^{1,2}\left( [0,T[\times B \right)\cap C^{0}\left( [0,T]\times
  B\right)$. 
For $(t,\eta) \in [0,T] \times B$, we decompose 
$ D u(t,\eta) =  D^{\delta_0} u (t,\eta) \delta_0 + D_{dx}^{\perp} u (t,\eta) $.
We symbolize again through $\chi_0$  the Chi-space 
constituted by specific Borel signed measures on $[-T,0]^2$ 
% $\shm([-T,0]^2)$ 
defined in item 6. in Section 
\ref{window}. 
We suppose the following.
\begin{enumerate}
\item $u$ fulfills Condition (C) 
and we denote 
\begin{equation} \label{F73}
 I(u)(t, \eta) := \int_{]-t,0]}  D^{\perp}_{dx} u (t,\eta) d^-\eta(x), \ (t,\eta)
\in [0,T] \times V_{2,\psi}.  
\end{equation}
 \item For all $t \in [0,T], \eta \in  V_{2,\psi}$, 
$D^2u(t,\eta) \in \chi_0$ and the map 
$(t,\eta) \mapsto D^2u(t,\eta)  $ is continuous with respect to the topologies of 
$[0,T] \times B$ and $\chi_0$. 
\item $u$ solves the
 solving the  infinite dimensional PDE
\begin{equation} \label{eq SYST}
\left\{
\begin{array}{l}
\partial_{t}u(t,\eta)+ I(u)(t,\eta) +\frac{\sigma^2}{2} \langle D^{2}u\,
(t,\eta)\; ,\; 1_{D_t}\rangle=0, \ 
 (t,\eta)
\in [0,T] \times V_{2,\psi},       \\
\\
u(T,\cdot)  = G.
\end{array}
\right.
\end{equation}
%where 
%$ D_t := \left \{ (x,y) \in [-t,0]^2 \vert x=y \right \}. $
\end{enumerate}
Then representation \eqref{EGenClark} holds 
with
$G_{0}= u(0,X_{0}(\cdot))$
%\item
and $\xi_{s}=D^{\delta_{0}}u(s,X_{s}(\cdot))$.
%\end{itemize}
\end{prop}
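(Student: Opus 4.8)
The plan is to derive the representation \eqref{EGenClark} from the Banach space valued It\^o formula of Theorem \ref{TITOF}, applied to the functional $u$ and to the window process $\X = X(\cdot)$, followed by a passage to the limit as the current time tends to $T$. Since $X$ has finite quadratic variation with $[X] = \psi$, Proposition \ref{PChiCalculation} ensures that $\X$ admits a $\chi_0$-quadratic variation; moreover, every trajectory stays a.s. in the set $K := \{X_s(\cdot): s \in [0,T]\}$, which is a compact subset of $B$ contained in $V_{2,\psi}$ (see Remark \ref{RNew}). Because $u$ is only of class $C^{1,2}$ on $[0,T[\times B$, I would apply the It\^o formula on each interval $[0,t]$ with $t<T$, obtaining
\begin{equation*}
u(t,\X_t) = u(0,\X_0) + \int_0^t \partial_r u(r,\X_r)\,dr + \int_0^t {}_{B^\ast}\langle Du(r,\X_r), d^-\X_r\rangle_B + \frac12 \int_0^t {}_{\chi_0}\langle D^2 u(r,\X_r), d\widetilde{[\X]}_r\rangle_{\chi_0^\ast}.
\end{equation*}
Hypothesis 2 supplies the continuity of $r\mapsto D^2u(r,\X_r)$ as a $\chi_0$-valued map along the trajectory, which is what Theorem \ref{TITOF} actually tests.

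Next I would treat the forward integral term. Using the splitting $Du(t,\eta) = D^{\delta_0}u(t,\eta)\,\delta_0 + D^\perp_{dx}u(t,\eta)$ and the decomposition \eqref{ISum}, this integral separates into $\int_0^t D^{\delta_0}u(r,\X_r)\,d^-X_r = \int_0^t \xi_r\,d^-X_r$ and the ``perp'' part ${}_{B^\ast}\langle D^\perp u(r,\X_r), d^-\X_r\rangle_B$. The identification of the perp part with $\int_0^t I(u)(r,\X_r)\,dr$ is exactly where Condition (C) enters: by Remark \ref{R111} the perp integral is the limit as $\varepsilon\to 0$ of $\int_0^t I(r,X_r(\cdot),\varepsilon)\,dr$, and Condition (C) provides the bound $|I(r,\X_r,\varepsilon)| \le \sup_{\eta\in K}J(r,\eta)$ on the compact set $K$, with $\int_0^T \sup_{\eta\in K}J(s,\eta)\,ds < \infty$, so Lebesgue's dominated convergence theorem (as in Remark \ref{RNew}) yields convergence to $\int_0^t I(u)(r,\X_r)\,dr$. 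Since Theorem \ref{TITOF} already guarantees existence of the full forward integral, the $\delta_0$-part $\int_0^t \xi_r\,d^-X_r$ then exists as the difference.

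I would then identify the second-order term by combining Proposition \ref{PChiCalculation} with $[X]_r = \psi(r) = \sigma^2 r$: the $\chi_0$-quadratic variation is carried by the diagonal with density $d[X]_r = \sigma^2\,dr$, so the pairing reduces to $\frac{\sigma^2}{2}\int_0^t \langle D^2 u(r,\X_r), 1_{D_r}\rangle\,dr$. Substituting the three $dr$-terms into the expansion and grouping gives
\begin{equation*}
u(t,\X_t) = u(0,\X_0) + \int_0^t \Big[\partial_r u(r,\X_r) + I(u)(r,\X_r) + \frac{\sigma^2}{2}\langle D^2 u(r,\X_r), 1_{D_r}\rangle\Big]\,dr + \int_0^t \xi_r\,d^-X_r,
\end{equation*}
and the bracketed integrand vanishes identically by the PDE \eqref{eq SYST}. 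Hence $u(t,\X_t) = u(0,\X_0) + \int_0^t \xi_r\,d^-X_r$ for every $t<T$.

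Finally I would let $t\uparrow T$. Since $u\in C^{0}([0,T]\times B)$ and $r\mapsto\X_r$ is continuous, the left-hand side converges to $u(T,\X_T) = G(X_T(\cdot)) = h$, using $u(T,\cdot) = G$; therefore the right-hand side converges as well, which is precisely the statement that the improper forward integral $\int_0^T \xi_r\,d^-X_r$ exists in the sense of Definition \ref{D32}. This gives \eqref{EGenClark} with $G_0 = u(0,X_0(\cdot))$ and $\xi_s = D^{\delta_0}u(s,X_s(\cdot))$, the latter being adapted as a deterministic functional of the path $X_s(\cdot)$. I expect the main obstacle to be twofold: first, justifying that Theorem \ref{TITOF} applies even though hypothesis 2 places $D^2u$ in $\chi_0$ only on $V_{2,\psi}$ --- this is legitimate because both $\X_r$ and the perturbed windows $\X_{r+\varepsilon}$ remain in the closed subspace $V_{2,\psi}$, so the It\^o expansion only ever evaluates $D^2u$ there; second, the domination step for the perp integral, for which Condition (C) is precisely designed to furnish the integrable bound needed to pass the $\varepsilon\to 0$ limit under the $dr$ integral.
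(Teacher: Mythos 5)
Your proof is correct and follows essentially the same route as the paper's own (very terse) argument: apply the Banach-valued It\^o formula of Theorem \ref{TITOF} to $u(t,\X_t)$ on $[0,t]$ for $t<T$, use Condition (C) and the dominated-convergence argument of Remark \ref{RNew} to identify the ``perp'' part of the forward integral with $\int_0^t I(u)(r,\X_r)\,dr$, cancel the $dr$-terms via the PDE \eqref{eq SYST}, and let $t\uparrow T$ to obtain the improper integral in \eqref{EGenClark}. You merely spell out details the paper leaves implicit (the restriction of $D^2u$ to $V_{2,\psi}$, the existence of the $\delta_0$-part as a difference, and the limit $t\uparrow T$), all of which are handled correctly.
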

\begin{rem} \label{RCor}
 The condition on $X$ implies that 
$X$ is a finite quadratic variation process
and $[X]_t = \sigma^2 t$. This hypothesis is a  bit stronger but it
is fulfilled in almost the known models where $[X] = \psi$. 
A typical $X$ with this a.s. property is
 the sum of a Wiener process and a H\"older process $V$
with respect to an index $\gamma > \frac{1}{2}$.
\end{rem}
\begin{proof} 
The proof the proposition is a consequence of Theorem \ref{TITOF} and of the 
considerations following the statement of Condition (C).
In particular we remark that for all $t \in [0,T]$, a.s.  we have
\begin{equation} \label{EC}
I(u)(t,X_t(\cdot)) =  \int_{0}^{t} {}_{B^{\ast}} {\langle}  
D^\perp F (r,\X_r),  d^- \X_r\rangle_B. 
\end{equation}
\end{proof}

Coming back to the two steps mentioned at the beginning of 
Section \ref{SIDPDE},
Theorem 9.41 and Theorem 9.53 of  \cite{DGR}
 give some sufficient conditions to solve \eqref{eq SYST}.
This constitutes step 1. This can be done for instance 
in the two following cases.
\begin{enumerate}
\item $G$ has a smooth Fr\'echet dependence on $L^{2}([-T,0])$.
\item $h:=G(X_{T}(\cdot))=f \left(\int_{0}^{T}\varphi_{1}(s)d^{-}X_{s},
\ldots,\int_{0}^{T}\varphi_{n}(s)d^{-}X_{s}
 \right)$, 
\begin{itemize}
\item $f:\mathbb{R}^{n}\rightarrow \mathbb{R}$ continuous
  with linear growth 
\item $\varphi_{i}\in C^{2}([0,T];\mathbb{R}), 1 \le i \le N$.
\end{itemize}
\end{enumerate}
\begin{rem} \label{R766}
Suppose that $X=W$.
There are cases where the methodology developed here 
is operational and the classical Clark-Ocone formula
 does not apply.
For instance in some cases $h$ may be allowed even not to belong to $L^{1}(\Omega)$
and a fortiori   $h\notin \D^{1,2}$  or $h\notin L^{2}(\Omega)$, see
for instance Proposition 9.55 in \cite{DGR}.
\end{rem}

\begin{rem} \label{R777}
\begin{enumerate}
\item Remark that our representation theorems also holds 
when $\sigma = 0$.
\item In a work in preparation, the authors extend the present theory 
to the case when $X$ is replaced with the window of a generic diffusion process.
\item The present  approach was developed at the same time and independently than  functional It\^o's calculus
 of B. Dupire,  R. Cont, D. Fourni\'e, see e.g. \cite{dupire,  ContFourJFA}.
\end{enumerate}
\end{rem}

\section{Applications to study of Kolmogorov equations}
\label{sec:Kolmogorov}

In this section we illustrate how to use the tools of stochastic calculus via regularization in the study of solutions of forward Kolmogorov equations 
(i.e. Fokker-Planck equations) related to an evolution 
problem in infinite dimensions, for instance a stochastic PDE.
 Kolmogorov equations in infinite dimension constitute a classical field 
of study, they appear for example in quantum field theory and in 
stochastic reaction-diffusion.
We do not have here the ambition of summarizing the existing literature but only to describe how the development of the theory we have
 described in previous sections can help to treat some cases that are not covered by the existing literature.

We are interested in studying a class of  Kolmogorov equations associated to an evolution equation of the form \eqref{FEDPS} using the strong solution approach. 
In other words we will define the solution of the Kolmogorov equation 
using  approximating sequences, see Definition \ref{def:strong-sol}. The main results of the section are the following.
\begin{itemize}
 \item[(i)] We provide, first of all, in Theorem \ref{th:decompo-sol-Kolmogorov}, a probabilistic representation of strong solutions $(t,\eta) \mapsto 
v(t,\eta)$ of the Kolmogorov
 equation decomposing it into two stochastic terms: the evaluation of the initial datum of the Kolmogorov equation along the trajectory of a reversed
 evolution equation and a stochastic integral term depending on the first derivative of $v$. 
 \item[(ii)] In Proposition \ref{cor:verification} we show  that 
%%%%%%% EQUIVALENCE? GIORGIO PERCHE EQUIVALENZA. IN QUALE CLASSE? $C^{0,1}$?
 a strong solution  of the Kolmogorov equation is also a mild solution.
 The definition of mild solution will be recalled in  (\ref{eq:def-valuefunction}).
 As a corollary we get the uniqueness of the strong solution.
\end{itemize}
With respect to similar contributions in this sense (see e.g. \cite{CerraiGozzi95, Gozzi97, DaPratoZabczyk02}) we are able to prove the uniqueness of the strong solution in cases in which the stochastic evolution equation connected to the Kolmogorov equation is not homogeneous and in which the regularity of 
the solution $(t,\eta) \mapsto v(t,\eta)$ is only requested to be $C^{0,1}$ with $D v(t,\eta) \in C([0,T]\times H; D(A^*)),$ 
 where $A$ is
 the generator of
 the $C_0$-semigroup 
appearing in the infinite dimensional stochastic evolution equation, see e.g. \eqref{FEDPS},  related to the Kolmogorov equation. 
More details about comparison  with existing results are given in Example \ref{ese1} and Remark \ref{ese2}.

\subsection{The setting}

\label{SSetting}
Let $H$ be a separable Hilbert space  and  $A$ be the generator of a $C_0$-semigroup on $H$, see Section \ref{Conv}.
We denote again with $D(A)$ and $D(A^*)$ respectively the domains of $A$ and $A^*$ endowed with the graph norm.
Let fix again $T > 0$.

Let us again consider  $\left( \Omega,\shf,\mathbb{P}\right)$  
a complete probability space and $\left ( \mathscr{F}_t \right )_{t\geq 0}$ 
a filtration  
on it
% $\left (\Omega , \mathscr{F}, \mathbb{P} \right )$ 
satisfying the usual conditions.
 Assume that $U, U_0 $ are separable Hilbert spaces, $Q \in \mathcal{L}(U)$ is a positive, injective and self-adjoint operator 
as in Section \ref{SGPF} or \ref{Conv} and  define $U_0:=Q^{1/2} (U)$.
% endowed with the scalar product $\left\langle a,b \right\rangle_{U_0} := \left\langle Q^{-1/2}a,Q^{-1/2}b \right\rangle$. 
Let again $\W^Q=\{\W^Q_t:0\leq t < +\infty\}$ be an $U$-valued $(\mathscr{F}_t)$-$Q$-Wiener process, with $\W^Q_0=0$, $\mathbb{P}$ a.s.

We  consider two functions $b$ and $\sigma$ as follows.

\begin{Hypothesis}
\label{hp:onbandsigma}
$b\colon [0,T]\times  H \to H$ is a continuous function and satisfies, for some $C>0$,
\[
\begin{array}{l}
|b(t,\eta) - b(t, \gamma)| \leq C |\eta- \gamma|, \\[3pt]
|b(t,\eta)| \leq C (1+|\eta|),
\end{array}
\]
for all $\eta, \gamma \in H$, $t\in [0,T]$. $\sigma\colon [0,T]\times H \to \mathcal{L}_2(U_0; H)$ is continuous and, for some $C>0$,
\[
\begin{array}{l}
\|\sigma(t,\eta) - \sigma(t,y)\|_{\mathcal{L}_2(U_0; H)} \leq C 
|\eta- \gamma|, \\[3pt]
\|\sigma(t,\eta)\|_{\mathcal{L}_2(U_0; H)} \leq C (1+|\eta|),
\end{array}
\]
for all $\eta, \gamma \in H$, $s\in [0,T]$.% (being $\| \cdot \|_{\mathcal{L}_2(U_0, H)}$ the Hilbert-Schmidt norm).
\end{Hypothesis}

\medskip

\begin{rem}
\label{rm:sigmaQ}
Observe that, thanks to the definition of norm on $U_0$, 
the hypothesis $\|\sigma(t,\eta)\|_{\mathcal{L}_2(U_0; H)} \leq C (1+|\eta|)$ 
implies
\[
\|\sigma(t,\eta) Q^{1/2}\|_{\mathcal{L}_2(U; H)} \leq C (1+|\eta|), \ 
(t,\eta) \in [0,T] \times H
\]
and then
\[
\left \| \left ( \sigma (t,\eta) Q^{1/2} \right )
 \left ( \sigma (t,\eta) Q^{1/2} \right )^\ast \right \|_{\shl_1(H)}
 \leq C^2 (1+|\eta|)^2, 
(t,\eta) \in [0,T] \times H.
\]
\end{rem} 

\medskip
For  $\eta \in H$, we consider the equation
\begin{equation}
\label{eq:state}
\left \{
\begin{array}{l}
d \X_t = \left ( A\X_t+ b(t,\X_t) \right ) d t + \sigma(t,\X_t) d \W^Q_t,\\[5pt]
\X_0=\eta.
\end{array}
\right.
\end{equation}

The solution of (\ref{eq:state}) is understood in the mild sense, so an $H$-valued 
 predictable continuous process $\X$ is said to be a {\bf mild  solution} of (\ref{eq:state}) if
%, for all $T\in (0, +\infty)$,
%is said to be a solution of (\ref{eq:state}) 
\[
\mathbb{P} \left ( \int_0^T (|\X_r| + | b(r,\X_r)| + \| \sigma(r,\X_r)\|_{\mathcal{L}_2(U_0; H)}^2) d r <+\infty  \right ) = 1
\]
and
\begin{equation}
\label{eq:state-mild}
\X_t =  e^{tA}\eta + \int_0^t e^{(t-r)A} b(r,\X_r) d r
 + \int_0^t e^{(t-r)A} \sigma(r,\X_r) d \W^Q_r
\end{equation}
 $\mathbb{P}$-a.s. for every  $ t \in [0,T]$,

\bigskip
 
Thanks to Hypothesis \ref{hp:onbandsigma}, standard results about stochastic infinite dimensional evolution equation, see e.g. Theorem 3.3 of 
\cite{GawareckiMandrekar10}, ensure that there exists a unique solution
 $\X$ of (\ref{eq:state}), which  admits a continuous modification. 
So for us, the solution $\X$ can always  be considered as a continuous  process.

\subsection{The Kolmogorov equation}

Let 
%$l\colon [0,T] \times H \times \Lambda \to \mathbb{R}$  be a measurable function and 
$g\colon H\to\mathbb{R}$ be a continuous and bounded function.
We introduce now the following non-homogeneous Kolmogorov equation.
\begin{equation}
\label{eq:Kolmogorov}
\left\{
\begin{array}{l}
- \partial_t v + \left \langle  A^* D v, \eta \right\rangle + \frac{1}{2} 
{\mathrm Tr} \left [ \, \sigma(t,\eta) \sigma^*(t,\eta) D^2 v \right ] + %\\[3pt]
%\qquad\qquad  + \inf_{a\in \Lambda }
  \left \langle D v, b(t,\eta) \right\rangle %+ l(s,\eta) 
  =0 \qquad \text{$(t,\eta) \in [0,T] \times H$},\\ [8pt]
v(0,\eta)=g(\eta),\qquad\text{$\eta \in H$}.
\end{array}
\right.
\end{equation}
In the above equation, given $(t,\eta) \in [0,T] \times H$, given $v:[0,T] \times H \rightarrow H$,   $Dv(t,\eta)$
%$D v$
 (resp.
% $\partial^2_{xx} v$]
$D^2 v(t,\eta)$)  is the  Fr\'echet (resp. second Fr\'echet)   derivative of $v$ w.r.t. to the second variable  $\eta$; it is identified with elements of $H$
 (resp. with a symmetric bounded operator on $H$, taking into account the identification \eqref{eq:expressionLB}).
 $\partial_t v$ is the derivative w.r.t. the time variable.

We recall that the spaces $C([0,T] \times H), C(H), 
C([0,T] \times H; D(A^\ast))$ are Fr\'echet type spaces
if equipped with the topology defined by the seminorms
\eqref{NormC}.
We denote by $\mathscr{L}_0$ the operator on $C([0,T]\times H)$ defined as
\begin{equation}
\label{eq:def-A0}
\left \{
\begin{array}{l}
D(\mathscr{L}_0):= \left \{ \varphi \in C^{1,2}([0,T]\times H) \; : \; D \varphi \in C( [0,T] \times H ; D(A^*)) \right \} \\[8pt]
\mathscr{L}_0 (\varphi)(t,\eta) := - \partial_t \varphi (t,\eta) + \left \langle A^* D \varphi (t,\eta), \eta \right\rangle + \frac{1}{2}
 {\mathrm Tr} \left [  \sigma(t,\eta) \sigma^*(t,\eta) D^2 \varphi(t,\eta) \right ].
\end{array}
\right .
\end{equation}

Using this notation, (\ref{eq:Kolmogorov}) can be rewritten as
\begin{equation}
\label{eq:Kolmogorov-with-F}
\left\{
\begin{array}{l}
%\partial_s v =  
\mathscr{L}_0(v(t,\cdot)) + %F(s,x,\partial_xv)
\left \langle D v, b(t,\eta)\right\rangle =0,  \qquad \text{$(t,\eta) \in [0,T] \times H$},\\[6pt]
v(0,\eta)=g(\eta),\qquad \text{$\eta \in  H$}.
\end{array}
\right.
\end{equation}

\subsection{Mild, strict and strong solutions}

We recall here three different definitions of solution of the Kolmogorov 
equation, see e.g. \cite{DaPratoZabczyk02} for more details.
Assume that Hypothesis \ref{hp:onbandsigma} is verified. Fix $s\in ]0,T]$.
By the same arguments as those at the end of Section \ref{SSetting},
 the equation below has a unique mild
 solution $\Y^s$ on $[0,s]$:
\begin{equation}
\label{eq:state-Y}
\left \{
\begin{array}{l}
d \Y^s_t = \left ( A\Y_t+ b(s-t,\Y^s_t) \right ) d t + \sigma(s-t,\Y^s_t) d \W^Q_t, \qquad \text{$t \in [0,s]$}, \\[5pt]
\Y^s_0=\eta.
\end{array}
\right.
\end{equation}
 We will be in fact mainly interested in its value at point $s$.

\begin{dfn} \label{D23}
[Mild solution of the Kolmogorov equation].

We call {\bf mild solution} of the Kolmogorov equation (\ref{eq:Kolmogorov}) the function
\begin{equation}
\label{eq:def-valuefunction}
V(s,\eta) : = %P_t(g) (x) %+  \int_{0}^{t} P_{t-r} \, l(r, \cdot) (x) d r 
 \mathbb{E}\bigg[ 
%\int_{0}^{t} l(r, \X(t-r;x)) d r + 
g (\Y^s_s) \bigg],
\end{equation}
where $\Y^s$ is the solution of (\ref{eq:state-Y}).
\end{dfn}

\begin{rem} \label{R84}
Whenever $b$ and $\sigma$ does not depend explicitly on  time we have  $\Y^s_s=\X_s$, where $\X$ is the solution of (\ref{eq:state}),
 so the definition given
 above reduces to the mild solution given in \cite{DaPratoZabczyk02} 
Section 6.5 page 122.
 In this case the mild solution can be expressed in terms of the transition semigroup
 $(P_t, t>0)$ corresponding to (\ref{eq:state}). More precisely one has
$V(t,\eta) : = P_t(g) (\eta),$
where, for any $t \in ]0,T]$, and for any bounded, measurable function $\phi\colon H \to \mathbb{R}$, $P_t$ is characterized as 
\begin{equation}
\label{eq:defPt}
(P_t \; \phi)   (\eta)= \mathbb{E} [\phi  (\X_t)].
\end{equation}
\end{rem}

We recall,  in a  slightly more general situation, the notion of strict and strong solutions.
Let consider $h\in C([0,T]\times H)$, $g\in C(H)$ and the  Cauchy problem
\begin{equation}
\label{eq:simil-Kolmogorov-con-h}
\left \{
\begin{array}{l}
%\partial_s v =  
(\mathscr{L}_0(v) + h)(t,\eta)=0, \qquad \text{$(t,\eta) \in [0,T] \times H$},\\[5pt] 
v(0,\eta) = g(\eta),\qquad \text{$\eta \in H$}.
\end{array}
\right .
\end{equation}
Moreover, for any $s\in (0,T]$,  we consider  the following Kolmogorov equation with final datum:
\begin{equation}
\label{eq:simil-Kolmogorov-con-inverse-pre}
\left \{
\begin{array}{l}
\partial_t u(t,\eta) + \left \langle  A^* D u(t,\eta), x \right\rangle + \frac{1}{2} {\mathrm Tr} \left [ \, \sigma(t,x) \sigma^*(t,x) D^2 u \right ] + %\\[3pt]
 h(t,\eta)=0, \qquad \qquad \text{$(t,\eta) \in [0,s[ \times H$}, \\[5pt] 
u(s,\eta) = g(\eta), \qquad \text{$\eta \in H$}.
\end{array}
\right .
\end{equation}
Introducing the new notation
\begin{equation}
\label{eq:def-L0-s}
\left \{
\begin{array}{l}
D(\mathscr{L}^s_0):= \left \{ \varphi \in C^{1,2}([0,s]\times H) \; : \; D \varphi \in C( [0,s]\times H ; D(A^*)) \right \}, \\[8pt]
\mathscr{L}^s_0 (\varphi)(t,\eta) :=  \partial_t \varphi (t,\eta) + \left \langle A^* D \varphi (t,\eta), \eta \right\rangle + \frac{1}{2} {\mathrm Tr}
 \left [  \sigma(t,\eta) \sigma^*(t,\eta) D^2 \varphi(t,\eta) \right ],
\end{array}
\right .
\end{equation}
  equation 
 \eqref{eq:simil-Kolmogorov-con-inverse-pre}
can be rewritten as
\begin{equation}
\label{eq:simil-Kolmogorov-con-inverse}
\left\{
\begin{array}{l}
%\partial_s v =  
(\mathscr{L}^s_0(u) + 
h)(t,\eta) = 0, \qquad \text{$(t,\eta) \in [0,s[ \times H$},\\[6pt]
u(s,\eta)=g(\eta),\qquad \text{$\eta \in H$}.
\end{array}
\right.
\end{equation}

\medskip

\begin{rem} \label{R85}
Observe that the signs in front of $\partial_t$ are opposite in (\ref{eq:Kolmogorov}) and 
(\ref{eq:simil-Kolmogorov-con-inverse-pre}).
\end{rem}

\bigskip

\begin{dfn}\label{D86} [Strict solution of the Kolmogorov equation].

Consider $h\in C([0,T]\times H)$ and $g\in C(H)$. We say that $v \in C^{1,2}([0,T]\times H)$ (resp. $u \in C([0,s]\times H)$) is a {\bf 
strict solution} of 
(\ref{eq:simil-Kolmogorov-con-h}) (resp. of (\ref{eq:simil-Kolmogorov-con-inverse})) if $v\in D(\mathscr{L}_0)$ (resp. if $u\in D(\mathscr{L}^s_0)$) 
 and (\ref{eq:simil-Kolmogorov-con-h}) (resp. (\ref{eq:simil-Kolmogorov-con-inverse})) is satisfied.
\end{dfn}

\smallskip
\begin{dfn}[Strong solution of the Kolmogorov equation].
\label{def:strong-sol}

Let $h\in C([0,T]\times H)$ and $g\in C(H)$. We  say that 
$v \in C^{0,1}([0,T]\times H)$ with $D v \in C([0, T]\times H; D(A^*))$
 (resp. $u \in C^{0,1}([0,s]\times H)$ with $D u \in C([0,s]\times H; D(A^*))$)  is a {\bf strong solution} of
 (\ref{eq:simil-Kolmogorov-con-h}) (resp. of (\ref{eq:simil-Kolmogorov-con-inverse})) if there exist three sequences
 $\{v_n \} \subseteq D(\mathscr{L}_0)$ (resp. $\{u_n \} \subseteq D(\mathscr{L}^s_0)$), $\{h_n\} \subseteq C([0,T]\times H)$ 
(resp. $C([0,s]\times H)$) and 
$\{ g_n \} \subseteq C(H)$ fulfilling the following.
\begin{enumerate}
\item[(i)] 
For any $n\in\mathbb{N}$, $v_n$ (resp. $u_n$) is a strict solution of the 
problem
\begin{equation}
\label{eq:approximating}
\left \{
\begin{array}{l}
 \mathscr{L}_0(v_n)(t,\eta)) + h_n(t,\eta)=0, \qquad \text{$(t,\eta) \in [0,T] \times H$},\\[5pt]
v_n(0,\eta) = g_n(\eta)\qquad \text{$\eta \in H$}.
\end{array}
\right .
\end{equation}
\begin{equation}
\label{eq:approximating-inverse}
\left (\text{resp. of}
\left \{
\begin{array}{l}
 (\mathscr{L}^s_0(u_n) + h_n)(t,\eta)=0,\qquad \text{$(t,\eta) \in [0,s[ 
\times H$},\\[5pt]
u_n(s,\eta) = g_n(\eta),\qquad \text{$\eta \in H$}.
\end{array}
\right. \right)
\end{equation}
\item[(ii)] The following convergences hold:
\[
\begin{array}{ll}
\left \{
\begin{array}{ll}
v_n\to v & \text{in} \;\; C([0,T]\times H),\\
h_n\to h & \text{in} \;\; C([0,T]\times H),\\
g_n\to g & \text{in} \;\; C(H),
\end{array}
\right .

\qquad\qquad
\left ( 
resp. \; 
\left \{
\begin{array}{ll}
u_n\to u & \text{in} \;\; C([0,s]\times H),\\
h_n\to h & \text{in} \;\; C([0,s]\times H),\\
g_n\to g & \text{in} \;\; C(H).
\end{array}
\right .
\right )
\end{array}
\]
\end{enumerate}
\end{dfn}

\subsection{Decomposition for strong solutions of the Kolmogorov equation}

\begin{thm}
\label{th:decompo-sol-Kolmogorov}
Consider %$h\in C([0,T]\times H)$ and 
$g\in C(H)$.  Assume that Hypothesis \ref{hp:onbandsigma} is satisfied.
 Suppose that $v\in C^{0,1}([0,T]\times H)$ with $D v\in C(H; D(A^*))$ is a
 strong solution of (\ref{eq:simil-Kolmogorov-con-h}).
Then, given $s\in ]0,T]$  and $\eta\in H$, we have 
\begin{equation}
v(s, \eta) = g(\Y^s_s) - \int_0^s \left\langle D v(s-r, \Y^s_r), \sigma (s-r,\Y^s_r) d \W^Q_r \right\rangle,
\end{equation}
where $\Y^s$ is the solution of (\ref{eq:state-Y}).
\end{thm}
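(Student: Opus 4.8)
The plan is to recognize $\Y^s$ as a convolution type process and to apply to it, after a time reversal, the non-smooth It\^o formula for $\nu$-weak Dirichlet processes of Theorem \ref{th:prop6}. First I would observe that the mild solution $\Y^s$ of \eqref{eq:state-Y} is a convolution type process in the sense of Definition \ref{D314}, with integrands $\sigma_r = \sigma(s-r,\Y^s_r)$, $b_r = b(s-r,\Y^s_r)$ and starting point $\eta$. Hence Proposition \ref{PConv} applies with $\nu_0 = D(A^\ast)$ and $\chi = \nu_0 \hat\otimes_\pi \nu_0$: the process $\Y^s$ is a $\nu_0 \hat\otimes_\pi \R$-weak Dirichlet process (item 3), it admits a $\chi$-quadratic variation (item 4), and its local martingale part is $\M_r = \eta + \int_0^r \sigma(s-u,\Y^s_u)\, d\W^Q_u$ (item 1).

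Next I would set $w(r,\cdot) := v(s-r,\cdot)$, which is of class $C^{0,1}([0,s]\times H)$ with $Dw(r,\cdot) = Dv(s-r,\cdot)$ continuous into $\nu_0 = D(A^\ast)$. Applying Theorem \ref{th:prop6} to $w$ along $\Y^s$ shows that the real process $Z_r := v(s-r,\Y^s_r)$ is a weak Dirichlet process whose local martingale component is
\[
M^F_r = v(s,\eta) + \int_0^r \langle Dv(s-u,\Y^s_u), d\M_u \rangle_H = v(s,\eta) + \int_0^r \langle Dv(s-u,\Y^s_u), \sigma(s-u,\Y^s_u)\, d\W^Q_u \rangle,
\]
the second equality using Proposition \ref{PChainRule-2} and Remark \ref{R24}. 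Writing $Z = M^F + A^F$ for the weak Dirichlet decomposition, unique by Proposition \ref{rm:ex418}, with $A^F$ the martingale-orthogonal part and $A^F_0 = 0$, it remains only to show that $A^F \equiv 0$, that is, that $Z$ is itself a local martingale; then evaluating $Z_s = M^F_s + A^F_s$ with $Z_0 = v(s,\eta)$ and $Z_s = v(0,\Y^s_s) = g(\Y^s_s)$ yields the announced formula.

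To control $A^F$ I would exploit the strong-solution structure. Each strict solution $v_n \in D(\mathscr{L}_0)$ is of class $C^{1,2}$ with $Dv_n \in C([0,s]\times H; D(A^\ast))$, so the smooth It\^o formula of Proposition \ref{lm:Ito} applies to $v_n(s-\cdot,\cdot)$ along $\Y^s$; since $\partial_r[v_n(s-r,\cdot)] = -\partial_t v_n(s-r,\cdot)$, the four finite-variation terms combine, recognizing the operator $\mathscr{L}_0$ of \eqref{eq:def-A0}, into the drift $\int_0^r (\mathscr{L}_0(v_n) + \langle Dv_n, b\rangle)(s-u,\Y^s_u)\, du$, while the martingale part of $Z^n_r := v_n(s-r,\Y^s_r)$ is $v_n(s,\eta) + \int_0^r \langle Dv_n, \sigma\, d\W^Q \rangle$. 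Because $v$ solves the Kolmogorov equation, the corresponding drift $\mathscr{L}_0(v) + \langle Dv, b\rangle$ vanishes identically; combined with the convergences $v_n \to v$, $h_n \to h$, $g_n \to g$ of Definition \ref{def:strong-sol}, which give $Z^n \to Z$ ucp, the uniqueness and stability of the weak Dirichlet decomposition force $A^F = 0$, so that $Z$ is a local martingale and the representation follows.

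The main obstacle is precisely this last limiting step. The definition of strong solution furnishes convergence of $v_n, h_n, g_n$ only in the spaces of continuous functions, and in particular does \emph{not} provide $Dv_n \to Dv$; hence neither the martingale parts $\int_0^\cdot \langle Dv_n, \sigma\, d\W^Q \rangle$ nor the drifts $\int_0^\cdot (\mathscr{L}_0 v_n + \langle Dv_n, b\rangle)\, du$ can be passed to the limit term by term. The role of Theorem \ref{th:prop6} is exactly to pin down the martingale component $M^F$ of the limit $Z$ directly from $v$, bypassing the approximation, while the orthogonal part is identified through the uniqueness of the decomposition (Proposition \ref{rm:ex418}) together with a uniform control of the quadratic variations of $Z^n - Z$ inherited from the $\chi$-quadratic variation of $\Y^s$ (Proposition \ref{PConv}, item 4). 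Establishing that the finite-variation parts genuinely vanish in the limit, rather than merely stabilize, is the delicate technical heart of the proof.
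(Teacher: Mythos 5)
Your overall architecture is the paper's: identify $\Y^s$ as a convolution type process, invoke Proposition \ref{PConv} to get the $\nu$-weak Dirichlet property and the $\chi$-quadratic variation, use Theorem \ref{th:prop6} to read off the local martingale part of $Z_r := v(s-r,\Y^s_r)$ as $v(s,\eta)+\int_0^r\langle Dv(s-u,\Y^s_u),\sigma(s-u,\Y^s_u)\,d\W^Q_u\rangle$, and then argue that the orthogonal part vanishes by exploiting the approximating strict solutions. Up to and including the identification of $M^F$ this matches the paper's proof exactly.

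The gap is precisely where you locate it, and the paper closes it with one fact you never invoke: \emph{the space of real continuous local martingales is closed under ucp convergence} (Proposition 4.4 of \cite{rg2}, cited in the paper's proof). With that, the argument is short. Each $Z^n_r := v_n(s-r,\Y^s_r)$ has, by Proposition \ref{lm:Ito} and the fact that $v_n$ is a \emph{strict} solution of the $n$-th approximating problem, identically vanishing drift, so $M^n := Z^n - Z^n_0$ is a genuine local martingale for every $n$; note that this cancellation happens at the level of $v_n$, where $\mathscr{L}_0(v_n)$ is defined, and not at the level of $v$ --- your phrase ``$\mathscr{L}_0(v)+\langle Dv,b\rangle$ vanishes identically'' is meaningless for a $v$ that is only $C^{0,1}$. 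Since $v_n\to v$ in $C([0,T]\times H)$ and $\sup_{r\le s}\vert\Y^s_r\vert$ is a.s.\ finite (the paper uses the moment bound of Theorem 7.4 of \cite{DaPratoZabczyk92}), one gets $M^n\to Z-Z_0$ ucp, and closedness yields that $Z-Z_0$ is itself a continuous local martingale. Uniqueness of the weak Dirichlet decomposition (Proposition \ref{rm:ex418}) then forces $A^F\equiv 0$ and identifies $Z-Z_0$ with the stochastic integral. No passage to the limit of the drifts or of the martingale parts term by term is needed, and no ``uniform control of the quadratic variations of $Z^n-Z$'' is required --- which is fortunate, since such control would be hard to obtain precisely because $Dv_n$ is not known to converge.
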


% \end{ese}
\begin{proof}%[Proof of Theorem \ref{th:decompo-sol-Kolmogorov}]
We  denote by $(v_n)$ the sequence of
smooth solutions of the approximating problems prescribed 
by Definition \ref{def:strong-sol}, which converges to $v$. We fix $s>0$ and we observe that $t\mapsto u(t,\eta):= v(s-t,\eta)$ is a strong solution of
\begin{equation}
\label{eq:Kolmogorov-with-F-inversa}
\left\{
\begin{array}{l}
\partial_t u+  \left \langle A^* D u, \eta \right\rangle + 
\frac{1}{2} {\mathrm Tr} \left [ \sigma(s-t,\eta) \sigma^*(s-t,\eta) D^2 u \right ] + \left \langle D u,
 b(s-t,\eta)\right\rangle = 0,\\[6pt]
u(s,\eta)=g(\eta),
\end{array}
\right.
\end{equation}
in the sense of Definition \ref{def:strong-sol} (in the case of (\ref{eq:approximating-inverse})) if we use, as a approximating sequence, $u_n(t,\eta):=v_n(s-t,\eta)$.
Thanks to Proposition \ref{lm:Ito}, every $u_n$ verifies, for $t\in [0,s]$,
\begin{multline}
\label{eq:u-Ito-pre}
u_n(t,\Y^s_t) =  u_n(0,\eta)  + \int_0^t \partial_r {u_n}(r,\Y^s_r) d r\\  +  \int_0^t \left\langle A^* D u_n(r,\Y^s_r), \Y^s_r \right\rangle d r
 +  \int_0^t \left\langle D u_n(r,\Y^s_r), b(s-r, \Y^s_r) \right\rangle d r\\
+ \frac{1}{2}  \int_0^t \text{Tr} \left [\left ( \sigma (s-r, \Y^s_r) {Q}^{1/2} \right )
\left ( \sigma (s-r, \Y^s_r) Q^{1/2}  \right)^* D^2 u_n(s-r,\Y^s_r) \right ] d r\\
+ \int_0^t \left\langle D u_n(r,\Y^s_r), \sigma(s-r, \Y^s_r) d \W^Q_r 
\right\rangle. \qquad \mathbb{P}-a.s.
\end{multline}
Since $u_n$ is a strict solution of (\ref{eq:Kolmogorov-with-F-inversa}) the expression
above gives, for $t\in [0,s]$,
\begin{equation}
u_n(t,\Y^s_t) =  u_n(0,\eta)  + 
%\int_0^s {h_n}(r,\X(r)) d r\\ +  \int_0^s \left\langle D u_n(r,\X(r)), b_i(r, \X(r)) \right\rangle d r + 
\int_0^t \left\langle D u_n(r,\Y^s_r), \sigma(s-r, \Y^s_r) d W_r\right\rangle.
\end{equation}
Define, for $t\in [0,s]$,
\begin{equation}
M^n_t := u_n(t,\Y^s_t) -  u_n(0,\eta).
\end{equation}
$(M^n)_{n\in\mathbb{N}}$ is a sequence of real local martingales 
(vanishing at zero). Since, thanks to Theorem 7.4 of \cite{DaPratoZabczyk92} 
one has
\[
\mathbb{E} \sup_{t\in [0,s]} \left ( 1 + \left | \Y_t^s \right |^N \right )
 < +\infty \qquad \text{for \ any} \ N \geq 1,
\]
$M^n$ converges ucp, thanks to the definition of strong solution, to
\begin{equation} \label{E72}
M_t := u(t,\Y^s_t) -  u(0,\eta).
%  - \int_0^s {h}(r,\X(r)) d r - - \int_0^s \left\langle D u(r,\X(r)), b_i(r, \X(r)) \right\rangle d r.
\end{equation} 
Since the space of real continuous local martingales equipped with
  the ucp topology is closed (see e.g. Proposition 4.4 of \cite{rg2}) then 
%Thanks to Proposition \ref{pr:FRnew} 
$M$ is a continuous local martingale.
\medskip

Now set $ \nu_0 = D(A^*)$,  
$ \chi =  \nu_0 \hat \otimes_\pi  \nu_0$ and we show how the theory developed 
in the previous sections can help us here.
Proposition \ref{PConv} 2. 
 ensures that $\Y^s$  is a $\bar \nu_0$-semimartingale with $\bar \nu_0$
being the dual of $D(A^*)$. By Proposition \ref{P67} 3.,
%$\bar \nu_0$
it is a $\nu_0 \hat\otimes_\pi \R$-weak Dirichlet process 
%with finite 
%$\bar \chi$-quadratic variation 
with decomposition $\M + \A$
 where $\M$  is 
 the local martingale 
% (with respect to $\P$)
 defined by
 $\M_t = \eta + \int_0^t \sigma (s-r, \Y^s_r) d \W^Q_r$
and $\A$ is a $\nu_0 \hat\otimes_\pi \R$-martingale-orthogonal process.
Moreover $\X$ has a finite $\chi$-quadratic variation by Proposition \ref{PConv} item 4.

Theorem \ref{th:prop6}  and Proposition \ref{PChainRule-2} (ii)
 ensures that the process $u(\cdot, \Y^s_\cdot)$ is a real
 weak Dirichlet process whose local martingale part being equal to
\begin{equation} \label{E72bis}
N_t = u(0,\eta) + \int_0^t \left\langle D u(r,\X_r), \sigma(s-r, \Y^s_r) 
d \W^Q_r\right\rangle.
\end{equation}
Observe that \eqref{EChainRule11} is satisfied thanks to Remark \ref{rm:sigmaQ} 
and the continuity of $Dv$, $\X$ and $\Y^s$.

By item 1. of  Proposition \ref{rm:ex418}  the decomposition of a real
 weak Dirichlet process is unique so, identifying \eqref{E72} with
 \eqref{E72bis},
 for any $t\in [0,s]$,
we get
\begin{equation} \label{E72quater}
u(t, \Y^s_t) = u(0,\eta) + \int_0^t \left\langle 
D u (r,\Y^s_r), \sigma(s-r, \Y^s_r) d \W^Q_r\right\rangle.
\end{equation}
 Since $v(s,\eta) = u(0,\eta)$, for any $t\in [0,s]$, by \eqref{E72quater}
it yields
%\begin{multline}
\begin{equation} \label{E72quinque}
v(s,\eta) = u(0,\eta) = u(t,\Y^s_t) 
%- M_t = u(t,\Y^s_t) 
- \int_0^t \left\langle D u (r,\Y^s_r), \sigma(s-r, \Y^s_r) 
d \W^Q_r\right\rangle.
% \\
%= g(\Y^s_t) - \int_0^t \left\langle D v (s-r,\Y^s_r), \sigma(s-r, \Y^s_r) d \W^Q_r\right\rangle,
\end{equation}
%\end{multline}
In particular, for $t=s$, since $u(s, \cdot) = g$ by
\eqref{eq:Kolmogorov-with-F-inversa}, it follows 
\begin{eqnarray*}
v(s,\eta) &=& u(0,\eta) 
%= u(s,\Y^s_s) - M_s 
= g(\Y^s_s) - \int_0^s \left\langle D u (r,\Y^s_r), \sigma(s-r, \Y^s_r) d \W^Q_r\right\rangle \\
&=& g(\Y^s_s) - \int_0^s \left\langle D v (s-r,\Y^s_r), \sigma(s-r, \Y^s_r) d \W^Q_r\right\rangle,
\end{eqnarray*}
which concludes the proof.
\end{proof}

We are now able to establish  uniqueness of the solution of the
Kolmogorov equation.

\begin{prop}
\label{cor:verification}
Assume that Hypotheses \ref{hp:onbandsigma} are satisfied and that 
$g$ is a continuous function from $H$ to $\mathbb{R}$.
 Let $v\in C^{0,1}([0,T]\times H)$ with $D v\in C(H; D(A^*))$ be a
 strong solution of (\ref{eq:Kolmogorov}). 
 Let $v$ such that
 $D v$
 has at most polynomial growth in the $\eta$ variable. Then the following holds.
 \begin{itemize}
  \item[(i)] The expectation appearing in \eqref{eq:def-valuefunction} 
makes sense  and it is finite; consequently the function $V$ is well-defined.
 \item[(ii)] $v=V$ on $[0,T]\times H$.
 \end{itemize}

\end{prop}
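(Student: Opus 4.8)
The plan is to read both statements off the probabilistic representation furnished by Theorem \ref{th:decompo-sol-Kolmogorov}. Applying that theorem to the strong solution $v$, for fixed $s\in\,]0,T]$ and $\eta\in H$ one obtains
\begin{equation*}
v(s,\eta) = g(\Y^s_s) - \int_0^s \left\langle Dv(s-r,\Y^s_r),\, \sigma(s-r,\Y^s_r)\, d\W^Q_r \right\rangle,
\end{equation*}
where $\Y^s$ solves \eqref{eq:state-Y}. Writing $N_s$ for the stochastic integral on the right-hand side, the whole argument reduces to proving that $N=(N_t)_{t\in[0,s]}$ is a genuine square integrable martingale, and not merely a local one, so that $\mathbb{E}[N_s]=0$.

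First I would establish the square integrability of $N$. By item (i) of Proposition \ref{PChainRule-2}, applied with $\Y_r = \sigma(s-r,\Y^s_r)$ and $\X_r = Dv(s-r,\Y^s_r)$, it is enough to check that the expectation of the quantity in \eqref{EChainRule11} is finite, i.e.
\begin{equation*}
\mathbb{E}\int_0^s \left\langle Dv(s-r,\Y^s_r),\, \big(\sigma(s-r,\Y^s_r) Q^{1/2}\big)\big(\sigma(s-r,\Y^s_r) Q^{1/2}\big)^* Dv(s-r,\Y^s_r) \right\rangle d r < \infty.
\end{equation*}
The integrand equals $\big|(\sigma(s-r,\Y^s_r) Q^{1/2})^* Dv(s-r,\Y^s_r)\big|^2$ and is therefore bounded by $\|\sigma(s-r,\Y^s_r) Q^{1/2}\|^2_{\mathcal{L}_2(U;H)}\,|Dv(s-r,\Y^s_r)|^2$. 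By Remark \ref{rm:sigmaQ} the first factor is at most $C^2(1+|\Y^s_r|)^2$, while the assumed polynomial growth of $Dv$ bounds the second by $C(1+|\Y^s_r|^m)^2$ for a suitable integer $m$; hence the integrand is dominated by a constant multiple of $1+|\Y^s_r|^{2m+2}$. Finiteness then follows from the moment estimate $\mathbb{E}\sup_{t\in[0,s]}(1+|\Y^s_t|^N)<\infty$, valid for every $N\ge 1$ by Theorem 7.4 of \cite{DaPratoZabczyk92}, upon choosing $N=2m+2$ and integrating in $r$ over $[0,s]$.

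Granted that $N$ is a square integrable martingale vanishing at zero, item (i) is immediate: the representation rewrites as $g(\Y^s_s)=v(s,\eta)+N_s$, exhibiting $g(\Y^s_s)$ as the sum of a deterministic constant and an integrable random variable, whence the expectation in \eqref{eq:def-valuefunction} is well-defined and finite. For item (ii) I would take expectations in the representation and use $\mathbb{E}[N_s]=0$ to conclude $v(s,\eta)=\mathbb{E}[g(\Y^s_s)]=V(s,\eta)$ for $s\in\,]0,T]$; the boundary case $s=0$ is covered by $v(0,\cdot)=g$ together with $\Y^0_0=\eta$. The main obstacle is precisely this upgrade from local to true martingale in the middle step: it is the polynomial growth hypothesis on $Dv$, combined with the moment bounds on $\Y^s$, that renders the quadratic variation integrable, and without this control one could only reach $\mathbb{E}[N_s]=0$ after a localization that need not be compatible with the passage to the final expectation.
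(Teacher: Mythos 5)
Your proof is correct and follows essentially the same route as the paper's: both invoke Theorem \ref{th:decompo-sol-Kolmogorov}, upgrade the stochastic integral from a local to a true (square integrable) martingale via the polynomial growth of $Dv$, Remark \ref{rm:sigmaQ}, the moment bounds of Theorem 7.4 of \cite{DaPratoZabczyk92}, and Proposition \ref{PChainRule-2}(i), and then take expectations. Your explicit handling of the case $s=0$ is a minor addition the paper leaves implicit.
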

\begin{proof}
Thanks to Theorem \ref{th:decompo-sol-Kolmogorov},
 we can write, for any $s\in(0,T]$,
 \begin{equation}
\label{ex66bis}
v(s, \eta) + \int_0^s \left\langle D v (s-r,\Y^s_r), \sigma(s-r, \Y^s_r) d \W^Q_r\right\rangle = g(\Y^s_s).
\end{equation}
Observe that, by Theorem 7.4 in \cite{DaPratoZabczyk92}, all the momenta 
of $\sup_{r \in [0,t]} \vert \Y^s_r \vert $ are finite. 
On the other hand $D v$ has polynomial growth,
then, recalling Remark \ref{rm:sigmaQ}, for $t\in [0,s]$, $$ \mathbb{E} 
\int_0^t \left\langle D v (s-r,\Y^s_r), \left ( \sigma (s-r,\Y^s_r) Q^{1/2} 
\right ) \left ( \sigma (s-r,\Y^s_r) Q^{1/2} \right )^* D v(s-r, \Y^s_r) 
\right\rangle d r$$
is less or equal to, for all $t\in [0,s]$,
$$ \mathbb{E} \int_0^t C \left ( 1 + |\Y^s_r|^N \right ) d r$$
for some constants $C$ and $N$ and then,  thanks again to Theorem 7.4 of 
\cite{DaPratoZabczyk92} is finite.

 Consequently, by Proposition \ref{PChainRule-2} (i)
%So, by  \cite{DaPratoZabczyk92} Section 3.4., the local martingale
\[
t\mapsto \int_0^t \left\langle D v (s-r,\Y^s_r), \sigma(s-r, \Y^s_r) d \W^Q_r\right\rangle, \qquad t\in [0,s], 
\]
is  a true martingale vanishing at $0$.  Consequently, for any $t\in [0,s]$,
 its expectation is zero. 
In  the left-hand side of  (\ref{ex66bis})
 we have a deterministic value and a random variable with zero-expectation,
 so the expectation of the right-hand side is well-defined and equals
  $v(s, \eta)$.
In particular we have
\[
v(s, \eta)  = \mathbb{E} \bigg [ g(\Y^s_s)) \bigg ] 
%+ \int_0^t l(r, \X(r)) d r 
\]
which  concludes the proof.
\end{proof}

\begin{ese}
\label{ese1}
Whenever $b$ and $\sigma$ do not depend directly on the time and then 
the Kolmogorov equation is homogeneous,  if  $\X$ is
 the solution of (\ref{eq:state}) and  $\Y^s$ the solution of (\ref{eq:state-Y}) we have 
\[
\X = \Y^s \qquad\text{on $[0,s]$},
\]
for any $s\in ]0,T]$. So in particular $v(s, \eta) = V(s,\eta) = P_s(g) (\eta)$ where $(P_t)$ is the transition semigroup associated to (\ref{eq:state}).
 In this case Proposition \ref{cor:verification} gives a result similar to that of Theorem 7.6.2 Chapter 7 of \cite{DaPratoZabczyk02}. 
In that case the authors do not use a strong solution approach. The two results have different hypotheses; in fact the one contained in \cite{DaPratoZabczyk02} 
requires that $v$ is in twice differentiable with locally uniformly continuous derivatives in the $\eta$ variable while our result require the $C^1$ regularity 
and that $D v(t,\eta) \in C([0,T]\times H; D(A^*))$.
\end{ese}

\begin{rem}
\label{ese2}
The technique we have presented here can easily be adapted to treat other cases. One is the case in which $b\equiv 0$ and the function $h$ appearing in (\ref{eq:simil-Kolmogorov-con-h}) is a generic continuous function.

In this case the uniqueness result can be formulated as follows:
 any strong solution with the regularity required by Proposition \ref{cor:verification} can be expressed as
\[
v(s, \eta)  = \mathbb{E} \bigg [ g(\Y^s_s) + \int_0^s h(s-r, \Y^s_r) d r \bigg ].
\]
Whenever $\sigma$ does not depend directly on the time the expression above can be rewritten as 
\[
v(s, \eta)  = \mathbb{E} \bigg [ g(\X_s) + \int_0^s h(s-r, \X_r) d r \bigg ].
\]
So the existence of $\mathbb{E} \left [ g(\X_s) \right ]$ implies the existence of $\mathbb{E} \left [\int_0^s h(s-r, \X_r) d r \right ]$ and vice-versa. When one of the two exists (e.g. if $g$ of $h$ are bounded or have polynomial 
growing) we can write the latter expression as
\[
 P_s(g) (\eta) + \int_0^s P_r( h(s-r, \cdot)) (\eta) dr.
\]
Then $v$ is the mild solution used for example (in the particular case $\sigma$ being the identity) in \cite{Gozzi97}. In that paper, the author uses a strong solution approach, introducing a series of functional spaces that allow to deal with a possible singularity at time $0$ (that we do not have here),  but he does not explicitly provide a uniqueness result.

Observe that in \cite{Gozzi97, CerraiGozzi95} the problem is approached 
by studying the properties of the transition semigroup defined in
 (\ref{eq:defPt}) on the space $C_b(H)$ of the continuous bounded 
function (or in some cases, on the space $B_b(H)$ of bounded function)
 defined on $H$ introducing a new notion of semigroup 
(see also \cite{Priola99}). This kind of methodology 
\emph{structurally} requires the initial datum $g$ 
to belong to $C_b(H)$ (or $B_b(H)$) and then Kolmogorov equations 
with unbounded initial datum cannot be studied.
\end{rem}
\begin{rem} \label{R813}
The ideas we used here to prove the relation between strong and mild solutions of the Kolmogorov equations can be used to study second order Hamilton-Jacobi-Bellman equation related to optimal control problems driven by stochastic PDEs and provide consequently verification theorems. This kind of approach is used for example in Section 6 of \cite{RusFab}.
\end{rem}

\noindent {\bf ACKNOWLEDGEMENTS:} The research was 
supported by the ANR Project MASTERIE 2010 BLAN-0121-01.
The second named author was partially supported by the \emph{Post-Doc Research Grant} of
\emph{Unicredit \& Universities} and his research has been developed in
the framework of the center of excellence LABEX MME-DII
(ANR-11-LABX-0023-01).
The authors are grateful to two anonymous Referees for
reading carefully the paper and helping us in improving its quality.

\addcontentsline{toc}{chapter}{Bibliography}
\bibliographystyle{plain}
\bibliography{biblio7}

\end{document}